\documentclass[10pt]{amsart}
\usepackage{latexsym, amsmath,amssymb}
\usepackage[abbrev,backrefs]{amsrefs}
\usepackage{graphicx}
\usepackage{xcolor}
\usepackage{enumitem}
\DeclareMathOperator*{\argmin}{arg\,min}

\newcommand{\subscript}[2]{$#1 _ #2$}

 \numberwithin{equation}{section}

\usepackage{amsmath}

%we define average integral

\def\XXint#1#2#3{{\setbox0=\hbox{$#1{#2#3}{%
\int}$ }
\vcenter{\hbox{$#2#3$ }}\kern-.6\wd0}}

\setlength\evensidemargin{.5in}
\setlength\textheight{44cc} \setlength\textwidth{30cc}
\setlength\topmargin{0in} \setlength\parskip{5pt}
\renewcommand{\epsilon}{\varepsilon}
%\newcommand{\newsection}[1]
%{\subsection{#1}\setcounter{theorem}{0} \setcounter{equation}{0}
%\par\noindent}
%\renewcommand{\theequation}{\arabic{section}.\arabic{equation}}
%\renewcommand{\thesection}{\arabic{subsection}}
\newtheorem{theorem}{Theorem}

\newtheorem{lemma}[theorem]{Lemma}
\newtheorem{corollary}[theorem]{Corollary}

\newtheorem{proposition}[theorem]{Proposition}
\newtheorem{definition}[theorem]{Definition}
\newtheorem{remark}[theorem]{Remark}
\newcommand{\bth}{\begin{theorem}}
\newcommand{\ble}{\begin{lemma}}
\newcommand{\bcor}{\begin{corr}}

\newcommand{\bdeff}{\begin{deff}}

\newcommand{\bprop}{\begin{proposition}}
\newcommand{\ele}{\end{lemma}}
\newcommand{\ecor}{\end{corr}}
\newcommand{\edeff}{\end{deff}}

\numberwithin{theorem}{section}

\newcommand{\eprop}{\end{proposition}}

\newcommand{\supp}{\text{supp }}
\renewcommand{\Pi}{\varPi}

\renewcommand{\epsilon}{\varepsilon}

\newcommand{\R}{{\mathbb R}}
\newcommand{\Z}{{\mathbb Z}}

%\newcommand{\square}{\Box}

%I add this because in my pc esint does not work
%\def\fint{\mathop{\,\rlap{-}\!\!\int}\nolimits}
\usepackage{esint}

\usepackage{comment}

\begin{document}

\title[$\beta$-dimensional sharp maximal function and its applications]
%A John-Nirenberg type inequality for critical Sobolev spaces]
{$\beta$-dimensional sharp maximal function and its applications}

\author[Y.-W.B. Chen]{You-Wei Benson Chen}
\address[Y.-W.B. Chen]{National Taiwan University, Department of Mathematics, Astronomy Mathematics Building 5F, No. 1, Sec. 4, Roosevelt Rd., Taipei 10617, Taiwan (R.O.C.)}
\email{bensonchen.sc07@nycu.edu.tw}

\author[A. Claros]{Alejandro Claros}
\address[A. Claros]{BCAM - Basque Center for Applied Mathematics and Universidad del País Vasco / Euskal Herriko Unibertsitatea, Bilbao, Spain}
\curraddr{}
\email{\tt aclaros@bcamath.org}

\subjclass[2020]{Primary 42B25, 46E35, 42B35; Secondary 31C15, 42B37}

\begin{abstract}
In this paper, we study $\beta$-dimensional sharp maximal operator defined as 
\begin{align*}
\mathcal{M}^{\#} _\beta f(x) := \sup_{Q} \inf_{c \in \mathbb{R}} \chi_{Q}(x) \frac{1}{\ell(Q)^\beta} \int_Q |f-c| \; d \mathcal{H}^{\beta}_\infty,
\end{align*}
where the supremum is taken over all cubes in $\R^d$ with sides pararell to the coordinate axes, $\ell(Q)$ is the length side of $Q$ and $\mathcal{H}^{\beta}_\infty$ is the Hausdorff content. In particular, we prove Fefferman-Stein inequality for $\mathcal{M}^{\#} _\beta f$ by giving a good lambda estimate for $\beta$-dimensional sharp maximal operator in the context of Hausdorff content. Additionally, we prove the Muckenhoupt-Wheeden inequality in this framework by establishing a good lambda inequality of independent interest.

\end{abstract}

\maketitle

\section{Introduction}
Introduced by C. Fefferman and E. M. Stein in \cite{FeffermanStein}, the sharp maximal function is defined by
\begin{align*}
\mathcal{M}^{\#}f (x) := \sup_{Q} \chi_Q (x) \frac{1}{|Q|} \int_Q |f - f_Q| \; dy,
\end{align*}
where $f_Q = \frac{1}{|Q|} \int_Q f \, dy$ denotes the average of $f$ over the cube $Q$. In \cite{FeffermanStein}, the Fefferman-Stein inequality was established to study the intermediate spaces between $\operatorname{BMO}$ and $L^p (\mathbb{R}^d)$. This inequality plays an important role in harmonic analysis. Specifically, there exists a constant $C>0$, depending only on the dimension $d$ and the exponent $p$, such that
\begin{align}\label{strongstein}
 \left( \int_{\mathbb{R}^d} \left(\mathcal{M} f \right)^p \; dx  \right)^{\frac{1}{p}}  \leq C \left( \int_{\mathbb{R}^d} \left(\mathcal{M}^{\#} f  \right)^p\; dx \right)^{\frac{1}{p}} \quad \text{ for } 1<p <\infty.
\end{align}
Here, $\mathcal{M}f$ denotes the Hardy-Littlewood maximal function of $f$, defined as
\begin{align*}
    \mathcal{M}f (x) : = \sup_Q \frac{\chi_{Q} (x)}{ |Q|} \int_Q |f|\; dy.
\end{align*}
%The proof of (\ref{strongstein}) relies on the good lambda estimate for the  Feffern-Stein maximal
%function: For all locally integrable function f in $\mathbb{R}^d$, we have 
%\begin{align*}
%    |\{ x\in  \mathbb{R}^d : \mathcal{M}f (x) >t \} | \leq  |\{ x \in \mathbb{R}^d: \mathcal{M}^{\#} f (x) > \frac{t}{A}\} | + \frac{2}{A} | \{ x\in \mathbb{R}^d: \mathcal{M}f (x) > 2^{-d-1} t\} |
%\end{align*}
%for any $A > 0$ and $t>0$.

In the recent paper \cite{Chen-Spector}, a John-Nirenberg inequality is established within the framework of Hausdorff content. More precisely, let \(\beta \in (0,d]\) and let \(Q_0\) be a cube in \(\mathbb{R}^d\). A function \(u\) is said to belong to \(\operatorname{BMO}^\beta(Q_0)\) if  
\begin{equation}\label{nondyadic_norm}
\|u\|_{\operatorname{BMO}^\beta(Q_0)} 
:= \sup_{Q \subset Q_0} \inf_{c \in \mathbb{R}} \frac{1}{\ell(Q)^\beta} \int_{Q} |u - c| \, d\mathcal{H}^{\beta}_\infty 
< +\infty,
\end{equation}
where the supremum is taken over all subcubes \(Q\) of \(Q_0\). In \cite{Chen-Spector}, it is further shown that any function \(u \in \operatorname{BMO}^\beta(Q_0)\) satisfies a John–Nirenberg-type estimate with respect to the Hausdorff content. Namely, there exist constants \(c_\beta, C_\beta > 0\) such that for every subcube \(Q \subset Q_0\), for all \(t > 0\), and for a suitably chosen \(c_Q \in \mathbb{R}\), one has 
\begin{equation}\label{Martinez-Spector-prime}
\mathcal{H}^\beta_\infty\bigl(\{x \in Q : |u(x) - c_Q| > t\}\bigr) 
\leq C_\beta \,\ell(Q)^\beta \exp(-c_\beta \, t /\|u\|_{\operatorname{BMO}^\beta(Q_0)} ).
\end{equation}
Here, \(\mathcal{H}^\beta_\infty\) denotes the Hausdorff content defined by 
\[
\mathcal{H}^\beta_\infty(E) 
:= \inf \Bigl\{\sum_{i=1}^\infty \omega_{\beta} \,r_i^\beta : E \subset \bigcup_{i=1}^\infty B(x_i, r_i)\Bigr\},
\]
where 
\[
\omega_{\beta} 
:= \frac{\pi^{\beta/2}}{\Gamma\bigl(\tfrac{\beta}{2}+1\bigr)}
\]
is a normalization constant. When $\beta = d$, this result recovers the classical John-Nirenberg theorem given in \cite{JN}. For $\beta \in (0,d)$, this result demonstrates a self-improving property for functions whose bounded mean oscillation is controlled on subspaces of dimension $\beta$, quantified via the Hausdorff content. The motivation to define such a space arose from the consideration of certain, sharp forms of the Sobolev embedding in the critical exponent \cite{Yudovich, Adams1973, Cianchi:2008, FontanaMorpurgo, MS}, as in this regime one often finds not only the well-known exponential integrability estimate but even estimates of this type along lower dimensional sets. A key ingredient in the proof of \eqref{Martinez-Spector-prime} in \cite{Chen-Spector} is the observation that, despite the inherent nonlinearity of the Choquet integral with respect to \(\mathcal{H}^{\beta,Q}_\infty\), the maximal function \(\mathcal{M}_{\mathcal{H}^{\beta,Q}_\infty}\) associated with the dyadic Hausdorff content \(\mathcal{H}^{\beta,Q}_\infty\) (see Section 2 for definition) satisfies the weak type \((1,1)\) bound
\[
\mathcal{H}^{\beta,Q}_{\infty}\Bigl(\bigl\{x \in \mathbb{R}^d : \mathcal{M}_{\mathcal{H}^{\beta,Q}_\infty} f(x) > t\bigr\}\Bigr)
\;\le\;
\frac{C}{t} \int_{\mathbb{R}^d} |f| \,d\mathcal{H}^{\beta,Q}_{\infty}.
\]
Additionally, the definition of $\operatorname{BMO}^\beta$ naturally leads to the introduction of the $\beta$-dimensional sharp maximal operator $\mathcal{M}^{\#} _\beta$ defined as follows.
\begin{definition}
Let $0< \beta \leq d\in \mathbb{N}$. The $\beta$-dimensional sharp maximal operator of a function $f$ in $\mathbb{R}^d$ is defined as 
\begin{align}\label{sharpmaximal}
\mathcal{M}^{\#} _\beta f(x) := \sup_{Q} \inf_{c \in \mathbb{R}} \chi_{Q}(x) \frac{1}{\ell(Q)^\beta} \int_Q |f-c| \; d \mathcal{H}^{\beta}_\infty
\end{align}
(see also \cite[p.~16]{harjulehto2024hausdorff} for the $\beta$-dimensional sharp maximal function defined with respect to balls in Euclidean space).
\end{definition}
It is easy to see that $\mathcal{M}^{\#} _\beta f \lesssim_{d,\beta}  \mathcal{M}_{\mathcal{H}^\beta_\infty} f$, where $\mathcal{M}_{\mathcal{H}^\beta_\infty}$ denotes the $\beta$-dimensional maximal operator introduced in \cite{chen2023capacitary}, defined as 
\begin{align*}
 \mathcal{M}_{\mathcal{H}^\beta_\infty} f(x)   = \sup_{r>0}  \frac{1}{\mathcal{H}^{\beta}_{\infty} (B(x,r))} \int_{B(x,r)} |f| \;\;d\mathcal{H}^{\beta}_{\infty}.
\end{align*}

The first main result of this paper extends (\ref{strongstein}) to the framework of the Choquet integral with respect to Hausdorff content. We establish a Fefferman–Stein inequality for the \(\beta\)-dimensional sharp maximal operator \(\mathcal{M}^{\#}_\beta f\), thereby generalizing the result in (\ref{strongstein}). Our approach is founded on a good-\(\lambda\) inequality for \(\mathcal{M}^{\#}_\beta f\), as presented in Theorem \ref{lemmaforcorollay3}. Deriving Theorem \ref{lemmaforcorollay3} poses challenges due to the nonlinearity of the Choquet integral; however, these difficulties are overcome by employing the packing condition for Hausdorff content introduced in \cite{OV}.

\begin{theorem}\label{Maintheorem3}
    Let $0 <  \beta  \leq d \in \mathbb{N}$, $0<p_0<\infty$ and $f\in L^{1}_{loc}(\mathcal{H}^\beta_\infty)$  such that \begin{equation}\label{newassumptionThm1.2}
    \sup_{0< t \le N} t^{p_0} \mathcal{H}^{\beta}_\infty ( \{ x\in \R^d : \mathcal{M}_{\mathcal{H}^\beta_\infty} f (x)>t\}) <\infty \qquad \text{ for all } N>0.
\end{equation}
    Then there exists a constant $C = C(p,d,\beta)>0$ such that 
   \begin{align}\label{Maincorollay3}
 \left(  \int_{\mathbb{R}^d} \left( \mathcal{M}_{\mathcal{H}^\beta_\infty} f  \right)^p \; d\mathcal{H}^\beta_\infty \right)^{\frac{1}{p}} \le C \left( \int_{\mathbb{R}^d} \left( \mathcal{M}^{\#} _\beta f \right)^p \; d\mathcal{H}^\beta_\infty \right)^{\frac{1}{p}}
   \end{align}
   for all $p_0<p<\infty$, and also
   \begin{align}\label{Maincorollay3-2}
   \left\| \mathcal{M}_{\mathcal{H}^\beta_\infty} f \right\|_{L^{p,\infty}(\mathcal{H}^\beta_\infty)} \le C \left\| \mathcal{M}^{\#} _\beta f  \right\| _{L^{p,\infty}(\mathcal{H}^\beta_\infty)}
   \end{align}
   for all $p_0\le p<\infty$. In here, $L^{p,\infty}( \mathcal{H}^\beta_\infty)$ denotes the vector space of all the functions $g: \mathbb{R}^d \to \mathbb{R}$ satisfying
\begin{align*}
   \Vert g \Vert_{L^{p,\infty} (\mathcal{H}^\beta_\infty)}: = \sup_{\lambda> 0 } \lambda \, \mathcal{H}^\beta_\infty\left( \left\{ x\in \mathbb{R}^d: |g(x)| > \lambda \right\} \right)^{\frac{1}{p}}< \infty.
\end{align*}
\end{theorem}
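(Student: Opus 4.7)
The plan is to deduce both estimates from the good-$\lambda$ inequality of Theorem \ref{lemmaforcorollay3} via the classical Fefferman--Stein integration-in-$\lambda$ scheme, augmented by a truncation device to accommodate the non-additivity of the Choquet integral and the fact that $\int (\mathcal{M}_{\mathcal{H}^\beta_\infty} f)^p\, d\mathcal{H}^\beta_\infty$ is not a priori finite under \eqref{newassumptionThm1.2}. Write $g := \mathcal{M}_{\mathcal{H}^\beta_\infty} f$ and $h := \mathcal{M}^{\#}_\beta f$. Combining the good-$\lambda$ inequality with the outer subadditivity of $\mathcal{H}^\beta_\infty$ applied to the inclusion $\{g > 2\lambda\} \subseteq \{g > 2\lambda,\, h \le \gamma \lambda\} \cup \{h > \gamma \lambda\}$ produces
\begin{equation*}
\mathcal{H}^\beta_\infty(\{g > 2\lambda\}) \le C \gamma\, \mathcal{H}^\beta_\infty(\{g > \lambda\}) + \mathcal{H}^\beta_\infty(\{h > \gamma \lambda\}), \qquad \lambda > 0,\ 0 < \gamma < \gamma_0.
\end{equation*}

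For \eqref{Maincorollay3} I would multiply this estimate by $p \lambda^{p-1}$, integrate in $\lambda$, and apply the layer cake identity $\int u^p\, d\mathcal{H}^\beta_\infty = \int_0^\infty p t^{p-1} \mathcal{H}^\beta_\infty(\{u > t\})\, dt$ (valid for the Choquet integral against any Choquet capacity). Formally this yields
\begin{equation*}
\int g^p\, d\mathcal{H}^\beta_\infty \le 2^p C\gamma \int g^p\, d\mathcal{H}^\beta_\infty + (2/\gamma)^p \int h^p\, d\mathcal{H}^\beta_\infty,
\end{equation*}
so choosing $\gamma$ with $2^p C \gamma < 1/2$ would absorb the first term on the right, \emph{provided} the left-hand side is a priori finite. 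To secure this, I would truncate: set $g_N := \min(g, N)$, and observe that $\{g_N > t\} = \{g > t\}$ for $t < N$ and is empty otherwise, so \eqref{newassumptionThm1.2} together with $p > p_0$ gives $\int g_N^p\, d\mathcal{H}^\beta_\infty \le C_N \int_0^N p t^{p-1-p_0}\, dt < \infty$, where $C_N$ is the finite supremum in \eqref{newassumptionThm1.2}. Integrating the good-$\lambda$ inequality only for $\lambda \in (0, N/2)$ reproduces the same estimate with $g_N$ on the left and $g_N$ (or $g$, trivially dominating) on the right; absorbing and letting $N \to \infty$ via monotone convergence of the Choquet integral (which holds because $\mathcal{H}^\beta_\infty$ is continuous from below) yields \eqref{Maincorollay3}.

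For the weak-type estimate \eqref{Maincorollay3-2}, I would proceed analogously on distribution functions. Set $A_N := \sup_{0 < t \le N} t^p\, \mathcal{H}^\beta_\infty(\{g > t\})$; since $p \ge p_0$ and $t^p \le N^{p - p_0}\, t^{p_0}$ on $(0, N]$, hypothesis \eqref{newassumptionThm1.2} forces $A_N < \infty$. Substituting $\lambda = t/2$ into the good-$\lambda$ inequality and taking the supremum over $0 < t \le N$ gives
\begin{equation*}
A_N \le 2^p C \gamma\, A_N + (2/\gamma)^p \|h\|_{L^{p,\infty}(\mathcal{H}^\beta_\infty)}^p,
\end{equation*}
and the usual absorption followed by $N \to \infty$ yields \eqref{Maincorollay3-2}. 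The main obstacle in both parts is the absorption step: because the Choquet integral against $\mathcal{H}^\beta_\infty$ is not linear and $\mathcal{H}^\beta_\infty$ is not $\sigma$-finite, the classical ``assume the left-hand side is finite'' trick is unavailable, and the local truncation plus monotone convergence argument above is what makes the step rigorous, thereby clarifying the precise role of the local-finiteness hypothesis \eqref{newassumptionThm1.2}.
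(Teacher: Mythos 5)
Your proposal is correct and follows essentially the same route as the paper: the good-$\lambda$ inequality of Theorem \ref{lemmaforcorollay3}, integration (resp.\ supremum) in $\lambda$ up to a level $N$, finiteness of the truncated quantity via hypothesis \eqref{newassumptionThm1.2} with $p>p_0$ (resp.\ $p\ge p_0$), absorption, and $N\to\infty$ by monotone convergence. The only ingredient you elide is the preliminary reduction to the dyadic setting --- Theorem \ref{lemmaforcorollay3} is stated for $\mathcal{M}_{\mathcal{H}^{\beta,Q_0}_\infty}$, $\mathcal{M}^{\#}_{\beta,Q_0}$ and $\mathcal{H}^{\beta,Q_0}_\infty$ rather than for $\mathcal{M}_{\mathcal{H}^\beta_\infty}$, $\mathcal{M}^{\#}_\beta$ and $\mathcal{H}^\beta_\infty$ --- which the paper handles routinely via Lemma \ref{equivoftwomaximal}, the equivalence \eqref{equivalenceofdyadic}, and the pointwise bound $\mathcal{M}^{\#}_{\beta,Q_0}f \leq C_\beta\, \mathcal{M}^{\#}_\beta f$.
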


\begin{remark}
    The condition \eqref{newassumptionThm1.2} is quite general, and it is satisfied, for example, for each function $f$ such that
    \begin{equation*}
        \int_{\R^d} |f|^{p_0} d\mathcal{H}_\infty^\beta<\infty,
    \end{equation*}
    with $p_0\ge 1$, by the weak type $(p,p)$ inequality satisfied by $\mathcal{M}_{\mathcal{H}^\beta_\infty}$ (see \cite{chen2023capacitary}). We emphasize that in the previous result, we do not require the function $f$ to be measurable; the result holds for non-measurable functions. In the case $\beta = d$, we recover \eqref{strongstein} while extending it to the setting of non-measurable functions.
\end{remark}

Let \(0 < \alpha < d\). We recall that the Riesz potential of order \(\alpha\) of a measure \(\mu\) is defined by
\begin{align*}
I_\alpha \mu(x) := \frac{1}{\gamma(\alpha)} \int_{\mathbb{R}^d} \frac{d\mu(y)}{|x-y|^{d-\alpha}},
\end{align*}
where \(\gamma(\alpha) = \pi^{d/2} 2^\alpha \Gamma(\alpha/2) \Gamma((d-\alpha)/2)^{-1}\) is a normalization constant.  We shall denote $I_\alpha f$ to represent $I_\alpha \mu$ where $d\mu(x)=f(x)dx$. It is well-known that B. Muckenhoupt and R. L. Wheeden proved in \cite[Theorem 1]{MR0340523} that for \(1 < p < \infty\),
\begin{align}\label{Wheeden}
    \| I_\alpha \mu \|_{L^p (\mathbb{R}^d)} \leq C \| \mathcal{M}_\alpha \mu \|_{L^p (\mathbb{R}^d)},
\end{align}
where the fractional maximal function \(\mathcal{M}_\alpha \mu \) is defined as
\begin{align*}
    \mathcal{M}_\alpha \mu (x)= \sup_{x\in Q}  \frac{\mu(Q)}{\ell(Q)^{d-\alpha}}.
\end{align*}
As before, we shall denote $M_\alpha f$ to represent $M_\alpha \mu$ where $d\mu(x)=f(x)dx$. The proof of (\ref{Wheeden}) employs the good lambda inequality for the fractional maximal function. Subsequently, Adams \cite{Adams1975} provided an alternative approach to \eqref{Wheeden} under the additional assumption that \(f\in L^r(\mathbb{R}^d)\) for some \(r\geq 1\). A key point in his argument is the following pointwise estimate of independent interest. 
\begin{theorem}[Adams \cite{Adams1975}]
Let $0< \alpha < d \in \mathbb{N}$. The pointwise inequality
\begin{equation}\label{Adamspointwise}
\mathcal{M}^\# \bigl(I_\alpha f\bigr)(x) 
\;\cong\; 
\mathcal{M}_\alpha f(x),
\end{equation}
holds for all non-negative measurable functions \(f\) in \(\mathbb{R}^d\) such that \(I_\alpha f \in L^1_{\mathrm{loc}}(\mathbb{R}^d)\) is a tempered distribution. 
\end{theorem}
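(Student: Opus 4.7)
The plan is to establish the pointwise equivalence \eqref{Adamspointwise} by proving the two inequalities separately.

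For the easy direction, $\mathcal{M}^{\#}(I_\alpha f)(x) \lesssim \mathcal{M}_\alpha f(x)$, I would fix a cube $Q$ containing $x$ with center $x_Q$, split $f = f_1 + f_2$ with $f_1 = f \chi_{2Q}$, and pick the test constant $c = I_\alpha f_2(x_Q)$. The local part is handled by Fubini together with the elementary estimate $\int_Q |y - z|^{\alpha - d}\, dy \lesssim \ell(Q)^\alpha$, valid because $\alpha > 0$, giving
\begin{equation*}
\frac{1}{|Q|}\int_Q I_\alpha f_1(y)\, dy \lesssim \ell(Q)^{\alpha - d}\int_{2Q}|f|\, dz \lesssim \mathcal{M}_\alpha f(x).
\end{equation*}
The non-local part is controlled by the smoothness bound $\bigl||y - z|^{\alpha - d} - |x_Q - z|^{\alpha - d}\bigr| \lesssim \ell(Q)\,|x_Q - z|^{\alpha - d - 1}$ for $y \in Q$, $z \notin 2Q$, followed by a dyadic decomposition $(2Q)^c = \bigcup_{k \ge 1} (2^{k+1}Q \setminus 2^k Q)$ and summation of the resulting geometric series.

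For the hard direction, $\mathcal{M}_\alpha f(x) \lesssim \mathcal{M}^{\#}(I_\alpha f)(x)$, I may reduce to $f \ge 0$. Fix a cube $Q \ni x$ that essentially realizes the supremum in $\mathcal{M}_\alpha f(x)$, and let $Q^* = 3Q$ be its concentric triple. Setting $u := I_\alpha f$, the identity $\int_{Q^*}|u - c|\,dy = \int_Q |u-c|\,dy + \int_{Q^*\setminus Q}|u-c|\,dy \ge |Q|\,|u_Q - c| + |Q^*\setminus Q|\,|u_{Q^*\setminus Q} - c|$ and optimization over $c \in \mathbb{R}$ yield
\begin{equation*}
\inf_{c \in \mathbb{R}} \frac{1}{|Q^*|}\int_{Q^*} |I_\alpha f - c|\, dy \gtrsim \bigl|u_Q - u_{Q^*\setminus Q}\bigr|.
\end{equation*}
Writing $u_A = \int f(z)\, K_A(z)\, dz$ with $K_A(z) := |A|^{-1}\int_A |y - z|^{\alpha - d}\, dy$, the target reduces to the kernel comparison $K_Q(z) - K_{Q^* \setminus Q}(z) \gtrsim \ell(Q)^{\alpha - d}$ for $z \in Q$. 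This is intuitive: the average of $|y - z|^{\alpha - d}$ over $Q$ captures the integrable singularity at $z$, while the average over $Q^* \setminus Q$ only samples distances comparable to $\ell(Q)$. The contribution from $z \in Q^* \setminus Q$, where $K_Q - K_{Q^* \setminus Q}$ may be negative, is absorbed using the near-maximality of $Q$, namely $\int_{Q^*} f \le 3^{d - \alpha} \ell(Q)^{d - \alpha} \mathcal{M}_\alpha f(x)$; the tail $z \in (Q^*)^c$ is handled by the same kernel-smoothness estimate as in the easy direction.

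The main obstacle is the delicate balance in this second direction: the positive local contribution from $z \in Q$, of order $\ell(Q)^{\alpha - d}\int_Q f \approx \mathcal{M}_\alpha f(x)$, must dominate both the negative contribution from $z \in Q^* \setminus Q$ and the far-field tail from $z \in (Q^*)^c$. The dominance rests on sharp, constant-aware computations of the kernel averages $K_A$ together with the maximality assumption on $Q$. Once the inequality is established for such a $Q$, taking the supremum over cubes containing $x$ yields \eqref{Adamspointwise}.
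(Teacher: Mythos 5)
The paper does not actually prove this statement: it is imported verbatim from Adams \cite{Adams1975}, and in Section~4 the authors only prove the inequality $\mathcal{M}^{\#}_\beta(I_\alpha f)\lesssim \mathcal{M}_\alpha f$ (Lemmas~4.3--4.4 and Theorem~4.5), deducing the converse from Adams's classical result. Your first direction, $\mathcal{M}^{\#}(I_\alpha f)\lesssim \mathcal{M}_\alpha f$, is correct and is exactly this standard splitting $f=f\chi_{2Q}+f\chi_{(2Q)^c}$ with the kernel-smoothness estimate for the far part; no complaints there.

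The converse direction, however, has a genuine gap, and it is structural rather than a matter of ``sharp, constant-aware computations.'' Your scheme gives, for a near-extremal cube $Q$,
\begin{equation*}
C_1\,\mathcal{M}^{\#}u(x)\;\ge\;\int_{Q}f\,(K_Q-K_{Q^*\setminus Q})\;-\;\int_{Q^*\setminus Q}f\,(K_{Q^*\setminus Q}-K_Q)^{+}\;-\;(\text{tail}),
\end{equation*}
where the first term is at least $c_0\,\ell(Q)^{\alpha-d}\int_Q f\approx c_0\,\mathcal{M}_\alpha f(x)$ but also at most $C\,\mathcal{M}_\alpha f(x)$, while the second and third terms are each bounded below only by $-C'\,\mathcal{M}_\alpha f(x)$ with constants $C'$ that are not small. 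Closing the argument would require $c_0$ to exceed the sum of the error constants, and it does not: already in $d=1$, $\alpha=1/2$, $Q=[0,1]$, one computes $\inf_{z\in Q}(K_Q-K_{Q^*\setminus Q})(z)\approx 0.59$ (attained at $\partial Q$), whereas $(K_{Q^*\setminus Q}-K_Q)(3/2)\approx 0.74$ and $3^{d-\alpha}-1\approx 0.73$, so the annulus term alone can dominate the main term. Moreover the defect cannot be repaired by changing the geometry: replacing $Q^*\setminus Q$ by $NQ\setminus Q$ improves the kernel bound on the annulus to $\sim N^{\alpha-d}\ell^{\alpha-d}$, but the admissible mass there grows to $(N\ell)^{d-\alpha}\mathcal{M}_\alpha f(x)$, and the product is again $\simeq \mathcal{M}_\alpha f(x)$ uniformly in $N$; the far-field tail behaves the same way. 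Any correct proof must avoid estimating the contribution of $f$ outside $Q$ by absolute values at the scale of $\mathcal{M}_\alpha f(x)$. One standard mechanism (and the place where the hypothesis that $I_\alpha f$ is a tempered distribution enters) is to test $f=(-\Delta)^{\alpha/2}u$ against a bump $\varphi$ adapted to $Q$ and write $\int f\varphi=\int (u-c)\,(-\Delta)^{\alpha/2}\varphi$: since $(-\Delta)^{\alpha/2}\varphi$ has vanishing integral, is $O(\ell^{-\alpha})$ near $Q$, and decays like $\ell^{d}|z-x_Q|^{-d-\alpha}$, the oscillation of $u$ at scale $2^k\ell$ enters with weight $2^{-k\alpha}$, and the telescoped bound $\int_{2^kQ}|u-u_{2Q}|\lesssim k(2^k\ell)^d\mathcal{M}^{\#}u(x)$ sums to give $\int_Qf\lesssim \ell^{d-\alpha}\mathcal{M}^{\#}u(x)$. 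Either supply an argument of this type or, as the paper does, cite Adams for this direction.
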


\begin{remark}
In \cite{Adams1975}, the hypothesis given for \eqref{Adamspointwise} requires \(I_\alpha f \in L^1_{\mathrm{loc}}(\mathbb{R}^d)\) along with the integrability condition
\[
\int_{\mathbb{R}^d} 
\frac{I_\alpha f(x)}{(1+|x|)^{\,d + \alpha}} 
\,dx 
< \infty.
\]
However, a closer examination of \cite[p.~771]{Adams1975} shows that this integrability condition is not strictly necessary. In fact, the weaker assumption that \(I_\alpha f \in L^1_{\mathrm{loc}}(\mathbb{R}^d)\) and is a tempered distribution suffices for the pointwise estimate \eqref{Adamspointwise}.
\end{remark}
Our second main result is a refinement of (\ref{Adamspointwise}) in the context of the \(\beta\)-dimensional sharp maximal operator, formulated as follows:

\begin{theorem}\label{Adamsextend1}
      Let $0< \alpha < d \in \mathbb{N}$, $\beta \in (d-\alpha ,d]$. The pointwise inequality
    \begin{align*}
        \mathcal{M}^{\#}_\beta (I_\alpha f)(x) \cong \mathcal{M}_\alpha f (x)
    \end{align*}
    holds for all non-negative measurable functions \(f\) in \(\mathbb{R}^d\) such that \(I_\alpha f \in L^1_{\mathrm{loc}}(\mathbb{R}^d)\) is a tempered distribution. 
\end{theorem}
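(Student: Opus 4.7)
The plan is to prove both pointwise inequalities $\mathcal{M}^{\#}_\beta(I_\alpha f)(x)\lesssim \mathcal{M}_\alpha f(x)$ and $\mathcal{M}_\alpha f(x)\lesssim \mathcal{M}^{\#}_\beta(I_\alpha f)(x)$, adapting Adams' classical $\beta=d$ argument to the Choquet/Hausdorff content setting. The hypothesis $\beta\in(d-\alpha,d]$ will enter as the sharp integrability threshold of the Riesz kernel against $\mathcal{H}^\beta_\infty$ over a cube.

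For the upper bound, I would fix a cube $Q\ni x$, split $f=f_1+f_2$ with $f_1:=f\chi_{2Q}$, and test against the constant $c_Q := I_\alpha f_2(x_Q)$ (with $x_Q$ the center of $Q$). The Choquet triangle inequality reduces matters to bounding $\int_Q I_\alpha f_1\,d\mathcal{H}^\beta_\infty$ and $\int_Q|I_\alpha f_2-c_Q|\,d\mathcal{H}^\beta_\infty$. The first is treated via a one-sided Fubini-type inequality for the Choquet integral, which follows from countable subadditivity of $\mathcal{H}^\beta_\infty$:
\begin{equation*}
\int_Q I_\alpha f_1(y)\,d\mathcal{H}^\beta_\infty(y) \;\lesssim\; \int_{2Q} f(z)\left(\int_Q \frac{d\mathcal{H}^\beta_\infty(y)}{|y-z|^{d-\alpha}}\right)dz.
\end{equation*}
A layer-cake computation together with $\mathcal{H}^\beta_\infty(B(z,r)\cap Q)\leq C\min(r^\beta,\ell(Q)^\beta)$ then yields the key bound
\begin{equation*}
\int_Q \frac{d\mathcal{H}^\beta_\infty(y)}{|y-z|^{d-\alpha}} \;\leq\; C\,\ell(Q)^{\beta+\alpha-d},
\end{equation*}
the tail integral converging exactly because $\beta>d-\alpha$. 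Combined with $\int_{2Q} f\leq C\ell(Q)^{d-\alpha}\mathcal{M}_\alpha f(x)$ this handles $f_1$. For the $f_2$ term, the uniform pointwise bound $|I_\alpha f_2(y)-c_Q|\lesssim\mathcal{M}_\alpha f(x)$ on $Q$ follows from the kernel-difference estimate $\bigl||y-z|^{\alpha-d}-|x_Q-z|^{\alpha-d}\bigr|\lesssim \ell(Q)|x_Q-z|^{\alpha-d-1}$ and the standard dyadic-annular decomposition of $(2Q)^c$.

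For the lower bound, I would fix $Q\ni x$ with $A:=\ell(Q)^{\alpha-d}\int_Q f$ comparable to $\mathcal{M}_\alpha f(x)$ and test $\mathcal{M}^{\#}_\beta$ against $Q$ itself. Splitting $I_\alpha f = I_\alpha(f\chi_{2Q}) + I_\alpha(f\chi_{(2Q)^c})$ and using the Choquet triangle inequality, the task splits into (i) a genuine lower bound $\inf_c \int_Q|I_\alpha(f\chi_{2Q})-c|\,d\mathcal{H}^\beta_\infty \gtrsim A\,\ell(Q)^\beta$, and (ii) the upper-bound-type control $\int_Q|I_\alpha(f\chi_{(2Q)^c})-c'|\,d\mathcal{H}^\beta_\infty\lesssim \mathcal{M}_\alpha f(x)\,\ell(Q)^\beta$, which is already available from the upper-bound argument. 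For (i) I would combine the pointwise lower bound $I_\alpha(f\chi_{2Q})(y)\geq c_1 A$ on $Q$ with the Choquet-Fubini upper bound $\int_Q I_\alpha(f\chi_{2Q})\,d\mathcal{H}^\beta_\infty\leq CA\ell(Q)^\beta$; a Chebyshev argument then exhibits a subset of $Q$ of content $\gtrsim \ell(Q)^\beta$ on which $I_\alpha(f\chi_{2Q})$ takes values in a bounded interval $[c_1 A,\,C'A]$, and the explicit center-versus-boundary spread of the Riesz convolution on $Q$ forces a definite Choquet mass of $|I_\alpha(f\chi_{2Q})-c|$ for every constant $c$, via a pigeonhole. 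Absorbing a small multiple of $\mathcal{M}_\alpha f(x)$ into the left-hand side then closes the argument.

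The main obstacle is the nonlinearity of the Choquet integral: Fubini is only available as a one-sided inequality, and the lower bound in (i) cannot be extracted from any Lebesgue-style mean-oscillation identity. It must instead be established through the combined Chebyshev/pigeonhole analysis together with the quantitative shape of the Riesz convolution, which is what rules out the delicate scenario where $I_\alpha(f\chi_{2Q})$ is nearly constant on $Q$. The assumption $\beta>d-\alpha$ is the sharp integrability threshold for the Riesz kernel against $\mathcal{H}^\beta_\infty$ over a cube and governs every key estimate on both sides.
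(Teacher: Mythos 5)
Your upper bound $\mathcal{M}^{\#}_\beta(I_\alpha f)\lesssim \mathcal{M}_\alpha f$ follows essentially the same route as the paper (split $f=f\chi_{2Q}+f\chi_{(2Q)^c}$, a one-sided Choquet--Fubini estimate for the local piece with the kernel integrability threshold $\beta>d-\alpha$, and a kernel-difference estimate for the tail); that half is sound. The genuine gap is in your lower bound. First, the absorption at the end cannot work as described: your steps (i) and (ii) combine to give $c\,\mathcal{M}_\alpha f(x)\le \mathcal{M}^{\#}_\beta(I_\alpha f)(x)+C\,\mathcal{M}_\alpha f(x)$, where $C$ is the fixed constant of the tail estimate (a convergent geometric sum over the dyadic annuli of $(2Q)^c$); nothing makes $C$ small, and with the dilate frozen at $2Q$ there is no parameter to tune, so this term cannot be absorbed into the left-hand side. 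Second, and more fundamentally, step (i) --- the claim $\inf_c\int_Q|I_\alpha(f\chi_{2Q})-c|\,d\mathcal{H}^\beta_\infty\gtrsim A\,\ell(Q)^\beta$ --- is exactly the hard content of Adams's theorem, and the Chebyshev/pigeonhole sketch does not prove it. Knowing that $I_\alpha(f\chi_{2Q})$ takes values in $[c_1A,\,C'A]$ on a subset of $Q$ of content $\gtrsim\ell(Q)^\beta$ is perfectly compatible with its being constant there, in which case the infimum over $c$ vanishes; the ``center-versus-boundary spread'' you invoke is precisely the statement that has to be established, and the known arguments either leave the cube $Q$ (comparing values on $Q$ with values at distance $N\ell(Q)$, where $I_\alpha(f\chi_{2Q})$ has decayed by the factor $N^{\alpha-d}$) or use a potential-theoretic identity; nothing in your outline supplies this quantitative spread on $Q$ alone.

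The paper sidesteps all of this. It quotes Adams's classical equivalence $\mathcal{M}^{\#}(I_\alpha f)\cong\mathcal{M}_\alpha f$ (the case $\beta=d$) and then deduces the lower bound from the elementary monotonicity $\mathcal{M}^{\#}_d g\lesssim\mathcal{M}^{\#}_\beta g$ for $\beta\le d$, which is an immediate consequence of the Orobitg--Verdera inequality $\int g\,d\mathcal{H}^{\beta'}_\infty\le\tfrac{\beta'}{\alpha'}\bigl(\int g^{\alpha'/\beta'}\,d\mathcal{H}^{\alpha'}_\infty\bigr)^{\beta'/\alpha'}$ together with H\"older's inequality for Choquet integrals (the paper's Lemmas \ref{alphabetachange} and \ref{sharpalphabetachange}). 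If you want a self-contained argument you would have to reprove Adams's lower bound from scratch in the content setting; otherwise the two-line reduction to the case $\beta=d$ is the intended, and rigorous, way to close the second inequality.
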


Theorem \ref{Adamsextend1} helps us study the local behavior of functions of bounded $\beta$-dimensional mean oscillation, and it provides an analog of \cite[Theorem 1.6]{chen2024selfimproving} concerning the local Morrey space (see Section 2 for precise definition). More precisely, let \(\Omega\) be an open set and \(f \geq 0\) such that \(I_\alpha f \in L^1_{\text{loc}}(\mathbb{R}^d)\). Then we have
\begin{align*}
    \| I_\alpha f \|_{\operatorname{BMO}^\beta(\Omega)} \cong \| f \|_{\mathcal{M}^\alpha (\Omega)} \quad \text{for} \quad \beta \in (d-\alpha, d].
\end{align*}

 Theorem \ref{Maintheorem3} and Theorem \ref{Adamsextend1} naturally led one to inquire whether there is an analog of B. Muckenhoupt and R. L. Wheeden's result in (\ref{Wheeden}) in the context of  \( L^p(\mathcal{H}^{\beta}_\infty) \)-boundedness. We now state the third result of this paper, which contains $(\ref{Wheeden})$ as a particular case with $\beta=d$.

\begin{theorem}\label{wheedenHausdorffcontent}
    Let $0<\alpha<d\in \mathbb{N}$, $\beta \in( d-\alpha, d ]$, $1\le p<\infty$ and $\mu$ be a locally finite measure in $\mathbb{R}^d$. Then there exists $C=C(d,\alpha, \beta)>0$ such that 
    \begin{align}\label{MW strong}
       \left( \int_{\mathbb{R}^d} \left( I_\alpha \mu \right)^{p}\; d \mathcal{H}^{\beta}_\infty\right)^{\frac{1}{p}} \leq C \,  p \left( \int_{\mathbb{R}^d} \left( \mathcal{M}_\alpha \mu \right)^p \; d \mathcal{H}^{\beta}_\infty \right)^{\frac{1}{p}} ,
    \end{align}
    and 
    \begin{align}\label{MW weak}
        \Vert I_\alpha \mu \Vert_{L^{p,\infty} (\mathcal{H}^\beta_\infty)} \leq C \, p \Vert \mathcal{M}_\alpha \mu \Vert_{L^{p,\infty}(\mathcal{H}^{\beta}_\infty)} .
    \end{align}
\end{theorem}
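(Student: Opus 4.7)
The plan is to deduce Theorem \ref{wheedenHausdorffcontent} by composing the two main ingredients of the paper: the Adams-type identification $\mathcal{M}^{\#}_\beta(I_\alpha f) \cong \mathcal{M}_\alpha f$ of Theorem \ref{Adamsextend1} and the Fefferman--Stein inequality of Theorem \ref{Maintheorem3}. Concretely, the strong-type bound \eqref{MW strong} is the end of the chain
\begin{align*}
\|I_\alpha \mu\|_{L^p(\mathcal{H}^\beta_\infty)}
&\le \|\mathcal{M}_{\mathcal{H}^\beta_\infty}(I_\alpha \mu)\|_{L^p(\mathcal{H}^\beta_\infty)} \\
&\le C(p,d,\beta)\,\|\mathcal{M}^{\#}_\beta(I_\alpha \mu)\|_{L^p(\mathcal{H}^\beta_\infty)} \\
&\le C(d,\beta)\,\|\mathcal{M}_\alpha \mu\|_{L^p(\mathcal{H}^\beta_\infty)},
\end{align*}
and \eqref{MW weak} follows analogously, using the weak-type companion \eqref{Maincorollay3-2} in place of \eqref{Maincorollay3}.

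Since Theorem \ref{Adamsextend1} is stated for functions rather than measures, I would first approximate: set $\mu_n := \mu(\,\cdot\, \cap B_n)$ and $f_{n,k} := \mu_n * \rho_{1/k}$, so that $f_{n,k}\in C_c^\infty(\mathbb{R}^d)$ with $f_{n,k}\ge 0$. Pointwise convergence $I_\alpha f_{n,k}(x)\to I_\alpha \mu_n(x)$ as $k\to\infty$, followed by monotone convergence $I_\alpha \mu_n(x)\nearrow I_\alpha \mu(x)$ as $n\to\infty$, together with the corresponding statements for $\mathcal{M}_\alpha$, transfers the inequality from each $f_{n,k}$ to $\mu$ via Fatou's lemma for the Choquet integral with respect to $\mathcal{H}^\beta_\infty$. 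For such smooth compactly supported $f$, the potential $I_\alpha f$ is continuous and decays like $|x|^{\alpha-d}$ at infinity, so $|I_\alpha f(x)|\le \mathcal{M}_{\mathcal{H}^\beta_\infty}(I_\alpha f)(x)$ everywhere by letting the averaging radius shrink to $0$, and the level set $\{\mathcal{M}_{\mathcal{H}^\beta_\infty}(I_\alpha f)>t\}$ is bounded for every $t>0$, which secures the hypothesis \eqref{newassumptionThm1.2} of Theorem \ref{Maintheorem3}. The middle inequality in the chain is then Theorem \ref{Maintheorem3} applied to $g = I_\alpha f$, while the third inequality is Theorem \ref{Adamsextend1} applied pointwise.

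I expect the main obstacle to be extracting the linear-in-$p$ constant asserted in \eqref{MW strong}. The Fefferman--Stein constant produced by Theorem \ref{Maintheorem3} originates from the layer-cake formula $p\int_0^\infty t^{p-1}\mathcal{H}^\beta_\infty(\{\cdot>t\})\,dt$ combined with a good-$\lambda$ inequality, after choosing the good-$\lambda$ parameter so that a geometric fraction of $\|\mathcal{M}_{\mathcal{H}^\beta_\infty}(I_\alpha f)\|^p_{L^p(\mathcal{H}^\beta_\infty)}$ may be absorbed into the left-hand side; careful bookkeeping in this optimization produces a factor proportional to $p$ rather than the expected exponential-in-$p$ blow-up. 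A secondary subtlety is the endpoint $p=1$: the hypothesis \eqref{newassumptionThm1.2} forces $p_0\ge \beta/(d-\alpha)>1$ for our compactly supported approximants, so \eqref{MW strong} must be obtained at $p=1$ either by Marcinkiewicz interpolation from \eqref{MW weak} (which is established first by running the same chain with the weak-type Fefferman--Stein inequality \eqref{Maincorollay3-2}) or simply by observing that the inequality is vacuous whenever $\mathcal{M}_\alpha\mu \notin L^1(\mathcal{H}^\beta_\infty)$.
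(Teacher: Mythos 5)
Your chain $\|I_\alpha\mu\|_{L^p(\mathcal{H}^\beta_\infty)}\le\|\mathcal{M}_{\mathcal{H}^\beta_\infty}(I_\alpha\mu)\|_{L^p(\mathcal{H}^\beta_\infty)}\le C\|\mathcal{M}^{\#}_\beta(I_\alpha\mu)\|_{L^p(\mathcal{H}^\beta_\infty)}\le C\|\mathcal{M}_\alpha\mu\|_{L^p(\mathcal{H}^\beta_\infty)}$ is not the paper's route, and it has a gap that you have identified but not resolved: the linear-in-$p$ constant. The good-$\lambda$ inequality underlying Theorem \ref{Maintheorem3} (Theorem \ref{lemmaforcorollay3}) has only the decay factor $\tfrac{8}{A}$ in the ``bad'' term, so absorption forces $A\sim 2^{(\beta+2)p}$ and the resulting constant in \eqref{Maincorollay3} is of order $2^{cp}$ after taking $p$-th roots --- exponential, not linear, in $p$. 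No amount of bookkeeping in that optimization yields $C\,p$; linearity in $p$ requires \emph{exponential} decay in the good-$\lambda$ parameter, which is exactly what the paper proves separately in Theorem \ref{goodlambdaestimaterelatedRieszandfractional} (estimate \eqref{good lambda exp}, with decay $Ce^{-c/\varepsilon}$, so that one may take $\varepsilon\sim c/p$). The paper explicitly bypasses Theorems \ref{Adamsextend1} and \ref{Maintheorem3} for this reason, proving instead a Honz\'ik--Jaye-type good-$\lambda$ inequality directly comparing the dyadic Riesz potential $I_\alpha^{\mathcal{D}}\mu$ with $\mathcal{M}_\alpha\mu$ and then integrating; the statement as written (constant $C(d,\alpha,\beta)\cdot p$) is not reachable by your argument.

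A second, independent gap is the range $1\le p\le \beta/(d-\alpha)$. As you note, for a nonzero compactly supported approximant $f$ the tail $I_\alpha f(x)\sim|x|^{\alpha-d}$ forces $\mathcal{H}^\beta_\infty(\{I_\alpha f>t\})\sim t^{-\beta/(d-\alpha)}$ as $t\to 0$, so hypothesis \eqref{newassumptionThm1.2} holds only for $p_0\ge\beta/(d-\alpha)>1$. But then neither \eqref{Maincorollay3} (which needs $p>p_0$) nor \eqref{Maincorollay3-2} (which needs $p\ge p_0$) is available for $p\le\beta/(d-\alpha)$, so there is no weak-type endpoint below $\beta/(d-\alpha)$ to interpolate from, and the observation that the inequality is vacuous when $\mathcal{M}_\alpha\mu\notin L^1(\mathcal{H}^\beta_\infty)$ does not address the nontrivial case $\mathcal{M}_\alpha\mu\in L^1(\mathcal{H}^\beta_\infty)$. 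By contrast, the paper's good-$\lambda$ argument works uniformly for all $1\le p<\infty$ (finiteness of the truncated integrals $\int_0^N$ being secured by the compact-support reduction and the tail bound $I_\alpha\mu\le\gamma(\alpha)^{-1}\mathcal{M}_\alpha\mu$ off $2B$). Your approximation scheme for passing from smooth $f$ to general locally finite $\mu$ is reasonable, but it cannot compensate for these two structural defects of the main chain.
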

It is noteworthy that the proof of Theorem \ref{wheedenHausdorffcontent} does not rely on Theorem \ref{Adamsextend1}. In fact, if one were to use Theorem \ref{Adamsextend1} to derive Theorem \ref{wheedenHausdorffcontent}, additional assumptions on \(I_\alpha f\) would be required. Instead, our approach employs the good-\(\lambda\) estimate established in Theorem \ref{goodlambdaestimaterelatedRieszandfractional}. Although the nonlinearity of the Choquet integral again introduces some challenges in the proof, these obstacles are overcome by applying the packing condition for Hausdorff content introduced in \cite{OV}.

\begin{remark}
    The linear dependence on $p$ is a consequence of the exponential decay in our good lambda inequality, stated in Theorem \ref{good lambda exp}. 
\end{remark}

\begin{remark}
    The dimensional constraint $\beta\in (d-\alpha, d]$ in Theorems \ref{Adamsextend1} and \ref{wheedenHausdorffcontent} naturally arise from the interaction between the Riesz potential $I_\alpha$ and the Hausdorff content $\mathcal{H}_\infty^\beta$. When $\beta\le d-\alpha$, the convolution kernel $|x|^{\alpha-d}$ fails to be integrable near the origin with respect to the content $\mathcal{H}_\infty^\beta$. This non-integrability implies that $I_\alpha f$ may be infinite in sets of dimension $d-\alpha$ for general functions $f$, as observed by D. Adams \cite[p. 772]{Adams1975}. On the other hand, under additional assumptions as $f\in L^r(\R^d)$ for some $r\in (1,\frac{n}{\alpha})$, then $I_\alpha f$ is in $L^p(\mathcal{H}_\infty^{d-\alpha})$ with $p = \frac{r(n-\alpha )}{n- \alpha r}$, as established in \cite[Theorem 5.9]{harjulehto2024hausdorff}. However, for general functions $f$, the constraint $\beta\in (d-\alpha, d]$ is necessary. 
\end{remark}

The paper is organized as follows: Section 2 reviews the necessary preliminaries for defining the Choquet integral with respect to Hausdorff content and discusses their basic properties, such as Hölder's inequality and packing condition. This section also covers the weak type $(1,1)$ estimate for dyadic maximal function and a minor modification of the Calderón-Zygmund decomposition with respect to the dyadic Hausdorff content presented in \cite[Theorem 3.1]{Chen-Spector}. In Section 3, we give the good lambda estimates for the \(\beta\)-dimensional sharp maximal function \(\mathcal{M}^\#_\beta f\), and we use it to prove Theorem \ref{Maintheorem3}. In section 4, we give the proof of Theorem \ref{Adamsextend1}. In Section 5, we give the good lambda estimates for the fractional maximal function \(\mathcal{M}_\alpha \mu \) and we prove Theorem \ref{wheedenHausdorffcontent}. Finally, in Section 6, we apply Theorem \ref{Thm exp int} to obtain a local exponential integrability estimate for the gradient of weak solutions to a singular $p$-Laplace-type equation with measure data, with respect to Hausdorff content.

\section{Preliminaries}\label{preliminaries}

We begin with recalling the definition of the Choquet integral with respect to an outer measure \( H \). Let \( H \) be an outer measure, that is, \( H \) satisfies the following properties:
\begin{enumerate}[label=(\roman*)]
\item \( H(\emptyset) = 0 \);
\item If \( E \subseteq F \), then \( H(E) \leq H(F) \);
\item If \( E \subseteq \bigcup_{i=1}^\infty E_i \), then 
\begin{align*}
H(E) &\leq \sum_{i=1}^\infty H(E_i).
\end{align*}
\end{enumerate}
The Choquet integral of a non-negative function \( f \) over a set \( \Omega \subseteq \mathbb{R}^d\) with respect to outter measure \( H \) is then defined as:
\begin{align}\label{choquet_def1}
\int_\Omega f \; dH := \int_0^\infty H(\{x \in \Omega : f(x) > t\}) \; dt.
\end{align}

A utilitarian way to deal with the Choquet integral with respect to the Hausdorff content $\mathcal{H}^{\beta}_{\infty}$ is to move between the dyadic Hausdorff content $\mathcal{H}^{\beta,Q}_{\infty}$ which is defined as follows:
\begin{definition}
  Let $0< \beta \le d \in \mathbb{N}$ and $Q$ be a cube in $\mathbb{R}^d$. Then for every $E\subseteq \mathbb{R}^d$, we define the dyadic Hausdorff content $\mathcal{H}^{\beta,Q}_{\infty}$ of $E$ as
\begin{align}\label{DefofHasudorffcontent}
 \mathcal{H}^{\beta,Q}_{\infty}(E):= \inf \left\{\sum_{i=1}^\infty \ell(Q_i)^\beta : E \subset \bigcup_{i=1}^\infty Q_i, \; Q_i \in \mathcal{D}(Q) \right\}
\end{align}
where $\mathcal{D}(Q)$ denotes the collection of all dyadic cube generated by $Q$.
\end{definition}
While the Hausdorff content $\mathcal{H}^{\beta}_{\infty}$ is not strongly subadditive if $\beta <d$, it can be shown that the dyadic Hausdorff content $\mathcal{H}^{\beta, Q}_\infty$ is strongly subadditive for any cube $Q \subseteq \mathbb{R}^d$ and thus satisfies
\begin{align}\label{subadditivitydyadic}
     \int_{\mathbb{R}^d} \sum^\infty_{j=1} f_j \;d \mathcal{H}^{\beta, Q}_\infty \leq \sum^\infty_{j=1}\int_{\mathbb{R}^d} f_j \;d \mathcal{H}^{\beta,Q}_\infty
\end{align}
(see \cite[Proposition 3.5 and 3.6]{STW} for usual dyadic case and \cite[Proposition 2.6 and Proposition 2.10]{Chen-Spector} for general cube $Q$). Moreover, there exists a constant $C_\beta>0$ such that for every cube $Q \subseteq \mathbb{R}^d$,
\begin{align}\label{equivalenceofdyadic}
 \frac{1}{C_\beta} \mathcal{H}^{\beta,Q}_{\infty}(E) \leq  \mathcal{H}^{\beta}_{\infty}(E) \leq C_\beta \mathcal{H}^{\beta,Q}_{\infty}(E)
\end{align}
for all $E \subset \mathbb{R}^d$(see Proposition 2.3 in \cite{YangYuan} and Proposition 2.11 in \cite{Chen-Spector}) and thus
\begin{align}\label{sublinearofHbeta}
    \int_{\mathbb{R}^d} \sum^\infty_{j=1} f_j \;d \mathcal{H}^\beta_\infty \leq C_\beta ^2\sum^\infty_{j=1}\int_{\mathbb{R}^d} f_j \;d \mathcal{H}^\beta_\infty
\end{align}
by definition of Choquet integral (\ref{choquet_def1}).

Next, we outline the basic properties of the Choquet integral with respect to the Hausdorff content. The following lemma can be found in \cite[p.~5]{Petteri_2023}, with its proof referred to in \cite{AdamsChoquet1} and \cite[Chapter 4]{AdamsMorreySpacebook}.

\begin{lemma}\label{basicChoquet}
Let $0<\beta\leq d \in \mathbb{N}$ and $\Omega \subseteq \mathbb{R}^d$. Then the following statements hold:
\begin{enumerate}[label=(\roman*)]
\item For \( a \geq 0 \) and non-negative functions \( f\) in $\mathbb{R}^d$, we have
\begin{align*}
    \int_\Omega a \; f(x)\; d\mathcal{H}^\beta_\infty = a \int_\Omega f(x) \; d\mathcal{H}^\beta_\infty;
\end{align*}
\item For non-negative functions \( f_1 \) and \( f_2 \) in $\mathbb{R}^d$, we have
\begin{align*}
    \int_\Omega f_1(x) + f_2(x)  \; d\mathcal{H}^\beta_\infty \leq 2 \left( \int_\Omega f_1(x) \; d\mathcal{H}^\beta_\infty + \int_\Omega f_2(x) \; d\mathcal{H}^\beta_\infty \right);
\end{align*}
\item Let \( 1 < p<\infty , \) and $p'$ defined by \( \frac{1}{p} + \frac{1}{p'} = 1 \). Then for non-negative functions \( f_1 \) and \( f_2 \) in $\mathbb{R}^d$, we have
\begin{align*}
    \int_\Omega f_1(x) f_2(x) \; d\mathcal{H}^\beta_\infty \leq 2 \left( \int_\Omega f_1(x)^p \; d\mathcal{H}^\beta_\infty \right)^{\frac{1}{p}} \left( \int_\Omega f_2(x)^{p'} \; d\mathcal{H}^\beta_\infty \right)^{\frac{1}{p'}}.
\end{align*}
\end{enumerate}
\end{lemma}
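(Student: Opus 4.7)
The plan is to derive each of the three properties directly from the layer-cake definition \eqref{choquet_def1} together with the fact that the Hausdorff content $\mathcal{H}^\beta_\infty$ is an outer measure (in particular, countably subadditive). No properties special to $\mathcal{H}^\beta_\infty$ are needed; the argument works for any outer measure $H$.

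For (i), the case $a=0$ is trivial. For $a>0$, I would perform the change of variable $s=t/a$ in the layer-cake representation:
\begin{align*}
\int_\Omega a\,f\,d\mathcal{H}^\beta_\infty
= \int_0^\infty \mathcal{H}^\beta_\infty(\{x\in\Omega:f(x)>t/a\})\,dt
= a\int_0^\infty \mathcal{H}^\beta_\infty(\{x\in\Omega:f(x)>s\})\,ds,
\end{align*}
which equals $a\int_\Omega f\,d\mathcal{H}^\beta_\infty$.

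For (ii), the key set-theoretic observation is that if $f_1(x)+f_2(x)>t$ then at least one of $f_i(x)>t/2$, hence
\begin{equation*}
\{f_1+f_2>t\}\subset \{f_1>t/2\}\cup\{f_2>t/2\}.
\end{equation*}
Applying subadditivity of $\mathcal{H}^\beta_\infty$, integrating in $t$, and using (i) with $a=2$ after a change of variable yields the factor $2$ on the right-hand side.

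For (iii), I would use the standard Young-inequality trick. Pointwise one has $f_1(x)f_2(x)\le \tfrac{1}{p}f_1(x)^p+\tfrac{1}{p'}f_2(x)^{p'}$. After normalizing by $A=(\int_\Omega f_1^p\,d\mathcal{H}^\beta_\infty)^{1/p}$ and $B=(\int_\Omega f_2^{p'}\,d\mathcal{H}^\beta_\infty)^{1/p'}$ (handling the degenerate cases $A=0$, $B=0$, $A=\infty$ or $B=\infty$ separately), I apply this bound to $f_1/A$ and $f_2/B$, and combine properties (i) and (ii) to obtain
\begin{equation*}
\int_\Omega \frac{f_1 f_2}{AB}\,d\mathcal{H}^\beta_\infty \le 2\Bigl(\tfrac{1}{p}+\tfrac{1}{p'}\Bigr)=2,
\end{equation*}
which is the desired inequality. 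The only mild subtlety is keeping track of the factor $2$ coming from (ii); since this is precisely the constant appearing in the statement, no sharper argument is required. As the lemma is essentially a well-known textbook fact (cf.\ \cite{AdamsChoquet1} and \cite[Chapter 4]{AdamsMorreySpacebook}), no real obstacle is expected.
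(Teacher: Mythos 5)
Your proof is correct. The paper does not actually prove this lemma---it cites \cite{Petteri_2023}, \cite{AdamsChoquet1}, and \cite[Chapter 4]{AdamsMorreySpacebook} for it---and your layer-cake argument (change of variables for homogeneity, the inclusion $\{f_1+f_2>t\}\subset\{f_1>t/2\}\cup\{f_2>t/2\}$ for quasi-additivity, and Young's inequality after normalization for H\"older) is exactly the standard proof found in those references; the only step you use tacitly is monotonicity of the Choquet integral under pointwise domination, which is immediate from the definition.
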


We denote by $\mathcal{M}^\beta(\Omega)$ the vector space of all Radon measure in $\mathbb{R}^d$ satisfying
\begin{align*}
\mathcal{M}^{\beta}(\Omega):= \left\{ \mu \in M_{loc}(\Omega): \|\mu\|_{\mathcal{M}^\beta (\Omega)} := \sup_{x \in \Omega,r>0} \frac{|\mu|(B(x,r) \cap \Omega)}{r^\beta}<\infty \right\},
\end{align*}
where $M_{loc}(\Omega)$ is the set of locally finite Radon measures in $\Omega$.

We now give the definition of the dyadic maximal function $ \mathcal{M}_{\mathcal{H}^{\beta,Q_0}_\infty}$, which is introduced in \cite{Chen-Spector}.

\begin{definition}
Let $0 < \beta \leq d \in \mathbb{N}$ and $Q_0$ be a cube in $\mathbb{R}^d$. The dyadic maximal function associate to $\mathcal{H}^{\beta,Q_0}_\infty$ is defined as 
\begin{align*}
    \mathcal{M}_{\mathcal{H}^{\beta,Q_0}_\infty} f (x) : = \sup_{Q} \frac{1}{\ell (Q)^\beta} \int _{Q} |f| \; d\mathcal{H}^{\beta,Q_0}_\infty,
\end{align*}
where the supremum is taken over all the dyadic cubes $Q$ in $\mathcal{D}(Q_0)$ containing $x$.
\end{definition}

%\begin{align*}
%    \Vert \mathcal{M}_\eta f \Vert_{L^p (  \mathbb{R}^n;\mathcal{H}^\beta_\infty)} \leq A_1 \Vert f \Vert_{L^p (\mathbb{R}^n)};
%\end{align*}

In particular, a weak type $(1,1)$ estimate for dyadic maximal function proved in \cite[Theorem 3.1]{Chen-Spector} asserts that for any $t>0$, the inequality
\begin{align}\label{dyadicweaktypeestimate11}
    \mathcal{H}^{\beta,Q_0}_{\infty}\left(\left\{ x\in \R^d : \mathcal{M}_{\mathcal{H}^{\beta,Q_0}_\infty} f(x)>t\right\}\right) \leq \frac{C}{t} \int_{\mathbb{R}^d} |f|\;d\mathcal{H}^{\beta,Q_0}_{\infty}
\end{align}
holds for all function $f$.
\begin{remark}
Note that in \cite[Theorem 3.1]{Chen-Spector}, $f$ is assumed to be $\mathcal{H}^\beta_\infty$-quasicontinuous (see \cite[p.~2]{Chen-Spector} for definition). However, one finds that the assumption
\begin{align*}
    \int_{\mathbb{R}^d} |f|\; d\mathcal{H}^\beta_\infty < \infty
\end{align*}
is sufficient for the weak type estimate (\ref{dyadicweaktypeestimate11}) by just following the original proof. (That is, the assumption of $f$ being quasicontinuous is not necessary for (\ref{dyadicweaktypeestimate11}).)
\end{remark}

The following lemma, concerning the comparison of the Hausdorff content of level sets of \(\mathcal{M}_{\mathcal{H}^{\beta}_\infty}\) and \(\mathcal{M}_{\mathcal{H}^{\beta,Q_0}_\infty}\), is given in \cite[Lemma 2.4]{chen2023capacitary}.

\begin{lemma}\label{equivoftwomaximal}
    Let $Q_0$ be a cube in $\mathbb{R}^d$. Then there exist constants $C= C(d, \beta)$ and $\Tilde{C} = \Tilde{C}(d,\beta)$ such that
    \begin{align*}
        \mathcal{H}^\beta_\infty\left( \left\{  x \in \mathbb{R}^d : \mathcal{M}_{\mathcal{H}^{\beta}_\infty} f (x) >t \right\} \right) \leq C \mathcal{H}^\beta_\infty\left( \left\{  x\in \mathbb{R}^d : \mathcal{M}_{\mathcal{H}^{\beta,Q_0}_\infty} f(x) > \Tilde{C}   2^{-(\beta +d) } t\right\} \right)
    \end{align*}
   holds for every function $f$.
\end{lemma}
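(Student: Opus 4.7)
The plan is, for each $x$ in the non-dyadic level set $E_t := \{\mathcal{M}_{\mathcal{H}^\beta_\infty} f > t\}$, to produce a nearby dyadic cube contained in $F_s := \{\mathcal{M}_{\mathcal{H}^{\beta,Q_0}_\infty} f > s\}$ with $s = \tilde{C}\, 2^{-(\beta+d)}\, t$, and then to assemble these cubes into an efficient covering of $E_t$. For each $x\in E_t$ I fix $r_x>0$ with $\int_{B(x,r_x)} |f|\,d\mathcal{H}^\beta_\infty > t\,\mathcal{H}^\beta_\infty(B(x,r_x))$. Since $\mathcal{H}^\beta_\infty(B(x,r_x))\asymp r_x^\beta$ and the Choquet integrals against $\mathcal{H}^\beta_\infty$ and $\mathcal{H}^{\beta,Q_0}_\infty$ are comparable by \eqref{equivalenceofdyadic}, this yields $\int_{B(x,r_x)} |f|\,d\mathcal{H}^{\beta,Q_0}_\infty \gtrsim_{d,\beta} t\,r_x^\beta$. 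Covering $B(x,r_x)$ by at most $2^d$ dyadic cubes of $\mathcal{D}(Q_0)$ of side length $\le 2 r_x$, then combining the strong subadditivity \eqref{subadditivitydyadic} of the dyadic Choquet integral with a pigeonhole over these $2^d$ cubes, produces a cube $R_x$ from this cover with
\[
\frac{1}{\ell(R_x)^\beta}\int_{R_x}|f|\,d\mathcal{H}^{\beta,Q_0}_\infty \;\geq\; \tilde{C}\, 2^{-(\beta+d)}\, t.
\]
Hence $R_x\subset F_s$; since $R_x$ meets $B(x,r_x)$ and $\ell(R_x)\asymp r_x$, I also obtain $x\in K R_x$ for a dimensional constant $K$.

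Next, I assemble $\{R_x\}_{x\in E_t}$ into a disjoint dyadic covering. Because two dyadic cubes are either nested or disjoint, the maximal elements $\{P_l\}$ of the family $\{R_x\}$ are pairwise disjoint, each contained in $F_s$, and absorb every $R_x$. Consequently $E_t\subset\bigcup_l K' P_l$ for an enlarged dimensional constant $K'$, and the subadditivity of $\mathcal{H}^\beta_\infty$ together with $\mathcal{H}^\beta_\infty(K'P_l)\leq C_{d,\beta}\,\ell(P_l)^\beta$ gives
\[
\mathcal{H}^\beta_\infty(E_t)\;\leq\; C\sum_l\ell(P_l)^\beta.
\]

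The hard part will be bounding $\sum_l\ell(P_l)^\beta$ by $\mathcal{H}^\beta_\infty(F_s)$: the Hausdorff content is not finitely additive on disjoint sets, so this step cannot follow from subadditivity alone. My remedy is to first refine $\{B(x,r_x)\}$ by a $5r$-Vitali selection so that the resulting cubes $P_l$ inherit a bounded-overlap property, and then to invoke a packing-type inequality for dyadic Hausdorff content in the spirit of Orobitg-Verdera \cite{OV}. That inequality yields $\sum_l\ell(P_l)^\beta \leq C\,\mathcal{H}^{\beta,Q_0}_\infty(\bigcup_l P_l) \leq C\,\mathcal{H}^{\beta,Q_0}_\infty(F_s)$, and a final application of \eqref{equivalenceofdyadic} delivers the desired bound $\mathcal{H}^\beta_\infty(E_t)\leq C\,\mathcal{H}^\beta_\infty(F_s)$.
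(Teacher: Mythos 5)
Your first half is sound and is essentially the expected route (the paper itself does not prove this lemma but only cites \cite[Lemma 2.4]{chen2023capacitary}): covering $B(x,r_x)$ by $2^d$ dyadic cubes of side comparable to $r_x$, passing to the dyadic content via \eqref{equivalenceofdyadic}, and pigeonholing with the strong subadditivity \eqref{subadditivitydyadic} correctly produces a dyadic cube $R_x\subseteq F_s$ with $x\in KR_x$, and this is exactly where the factor $2^{-(\beta+d)}$ originates. The gap is in the last step. The inequality $\sum_l\ell(P_l)^\beta\le C\,\mathcal{H}^{\beta,Q_0}_\infty(\bigcup_l P_l)$ is \emph{false} for a general family of pairwise non-overlapping dyadic cubes when $\beta<d$: take $\{P_l\}$ to be all $2^{nd}$ dyadic cubes of generation $n$ inside $Q_0=[0,1]^d$ (all maximal within that family, and nothing in your construction rules this configuration out for the maximal elements of $\{R_x\}$); then $\sum_l\ell(P_l)^\beta=2^{n(d-\beta)}\to\infty$ while $\mathcal{H}^{\beta,Q_0}_\infty(\bigcup_l P_l)=1$. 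Disjointness or bounded overlap is not the obstruction — the content is simply far from additive on disjoint families — so the proposed $5r$-Vitali refinement does not repair anything. Moreover, Lemma \ref{packing2} does not assert the packing bound for the \emph{whole} family: it only produces a \emph{subfamily} $\{P_{l_k}\}$ (plus ancestors $\tilde P_m$) for which $\sum_k\ell(P_{l_k})^\beta\le 2\,\mathcal{H}^{\beta,Q_0}_\infty(\bigcup_l P_l)$. Your step 6 already commits to the lossy bound $\mathcal{H}^\beta_\infty(E_t)\le\sum_l\mathcal{H}^\beta_\infty(K'P_l)$ over the full family, which cannot be closed.

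The argument is repairable with the tools already in the paper, but you must dilate only the \emph{selected} cubes. Apply Lemma \ref{packing2} to $\{P_l\}$ to get $\{P_{l_k}\}$ and ancestors $\{\tilde P_m\}$; since every $P_l$ is either a selected cube or contained in an ancestor, each $K'P_l$ lies in $K''P_{l_k}$ or $K''\tilde P_m$ for a fixed dimensional $K''$, whence $E_t\subseteq\bigcup_k K''P_{l_k}\cup\bigcup_m K''\tilde P_m$ and therefore $\mathcal{H}^\beta_\infty(E_t)\le C\bigl(\sum_k\ell(P_{l_k})^\beta+\sum_m\ell(\tilde P_m)^\beta\bigr)\le C\,\mathcal{H}^{\beta,Q_0}_\infty(\bigcup_l P_l)\le C\,\mathcal{H}^{\beta}_\infty(F_s)$ by properties (2)–(3) of Lemma \ref{packing2} and \eqref{equivalenceofdyadic}. (An equivalent fix: test against a near-optimal non-overlapping dyadic cover $\{S_j\}$ of $F_s$ and split according to whether $R_x$ is contained in some $S_j$ or is a union of smaller $S_j$'s, using non-overlap of the maximal $R_x$'s to avoid double counting.) You should also record that one may assume $\mathcal{H}^{\beta,Q_0}_\infty(F_s)<\infty$, which is what guarantees that maximal elements of $\{R_x\}$ exist.
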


We next record a lemma concerning the packing condition of dyadic cubes, which is a small modification of \cite[Proposition 2.11]{Chen-Spector} and \cite[Lemma 2]{OV}.

\begin{lemma}\label{packing2}
    Suppose $\{Q_j\}$ is a family of non-overlapping dyadic cubes subordinate to some cube lattice $\mathcal{D}(Q)$. Then there exists a subfamily $\{Q_{j_k}\}$ and a family of non-overlapping ancestors $\tilde{Q}_k$ such that
\begin{enumerate}
\item \begin{align*}
\bigcup_{j} Q_j \subset \bigcup_{k} Q_{j_k} \cup \bigcup_{k} \tilde{Q}_k
\end{align*}
\item
\begin{align*}
\sum_{Q_{j_k} \subset Q}\ell(Q_{j_k})^\beta \leq 2\ell (Q)^\beta, \text{ for each dyadic cube } Q.
\end{align*}
\item
 \begin{align*}
      \mathcal{H}^{\beta, Q_0}_\infty(\cup_j Q_j) 
      & \leq   \sum_{k, Q_{j_k} \not \subseteq \tilde{Q}_m} \ell(Q_{j_k})^\beta + \sum_k \ell(\Tilde{Q}_k)^\beta \\
      &\leq  \sum_k \ell(Q_{j_k})^\beta \\
      & \leq  2  \mathcal{H}^{\beta, Q_0}_\infty (\cup_j Q_j).
  \end{align*}
\end{enumerate}

\end{lemma}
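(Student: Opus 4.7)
The plan is to perform a Calder\'on-Zygmund-style stopping-time construction, in the spirit of \cite[Lemma~2]{OV} and the dyadic Hausdorff adaptation in \cite[Proposition~2.11]{Chen-Spector}. The first step is to declare a dyadic cube $R \in \mathcal{D}(Q_0)$ \emph{bad} when $\sum_{Q_j \subseteq R} \ell(Q_j)^\beta > \ell(R)^\beta$, and to let $\{\tilde{Q}_k\}$ be the collection of maximal bad cubes. The nested nature of the dyadic grid forces them to be pairwise non-overlapping, and each satisfies $\ell(\tilde{Q}_k)^\beta < \sum_{Q_j \subseteq \tilde{Q}_k} \ell(Q_j)^\beta$, while its dyadic parent (when it exists inside $Q_0$) is not bad. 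The subfamily $\{Q_{j_k}\}$ is then assembled from all $Q_j$'s that are not strictly contained in any $\tilde{Q}_k$, together with a carefully chosen subselection of $Q_j$'s inside each $\tilde{Q}_k$.

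The second step is the subselection inside each $\tilde{Q}_k$. Because $\tilde{Q}_k$ is bad, a greedy procedure (adding $Q_j \subseteq \tilde{Q}_k$ until the $\beta$-side-length sum first exceeds $\ell(\tilde{Q}_k)^\beta$) produces a sub-collection whose total lies in $[\ell(\tilde{Q}_k)^\beta, 2\ell(\tilde{Q}_k)^\beta]$, since each term has $\ell(Q_j)^\beta \leq \ell(\tilde{Q}_k)^\beta$. To propagate the packing condition (2) to every dyadic scale inside $\tilde{Q}_k$, I would then apply the same stopping argument recursively to the maximal bad subcubes within $\tilde{Q}_k$; this iteration preserves the packing invariant at every subscale while keeping the lower bound $\sum_{Q_{j_m} \subseteq \tilde{Q}_k} \ell(Q_{j_m})^\beta \geq \ell(\tilde{Q}_k)^\beta$ needed for the middle inequality in (3).

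With the construction in place, (1) is immediate, since every excluded $Q_j$ lies strictly inside some $\tilde{Q}_k$. For (2), if $Q \in \mathcal{D}(Q_0)$ is not contained in any $\tilde{Q}_k$ then maximality yields $Q \notin \mathcal{B}$, whence
\[
\sum_{Q_{j_k} \subseteq Q} \ell(Q_{j_k})^\beta \;\leq\; \sum_{Q_j \subseteq Q} \ell(Q_j)^\beta \;\leq\; \ell(Q)^\beta \;\leq\; 2\,\ell(Q)^\beta;
\]
otherwise $Q \subseteq \tilde{Q}_k$ for some $k$, and the recursive selection delivers the $2\ell(Q)^\beta$ bound. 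For (3), the first inequality uses $\bigcup_j Q_j \subseteq \bigcup_k Q_{j_k} \cup \bigcup_k \tilde{Q}_k$ as a dyadic cover; the middle inequality is precisely the greedy lower bound inside each $\tilde{Q}_k$; and the last inequality follows by testing against any dyadic cover $\{R_l\}$ of $\bigcup_k Q_{j_k}$, splitting the selected cubes into those contained in some $R_l$ (whose contribution is controlled via (2)) and those whose sub-cover $\{R_l \subseteq Q_{j_k}\}$ satisfies $\sum_{R_l \subseteq Q_{j_k}} \ell(R_l)^\beta \geq \ell(Q_{j_k})^\beta$ by the concavity of $x \mapsto x^{\beta/d}$ (using $\beta \leq d$), then infimizing over covers.

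The technical heart of the argument is the recursive step inside each $\tilde{Q}_k$: a single pass of greedy selection controls the total $\beta$-mass at the top scale of $\tilde{Q}_k$ but can leave unbalanced clusters at finer scales, so one has to iterate the stopping-time construction within each $\tilde{Q}_k$ to enforce the packing bound (2) uniformly at every dyadic sub-scale while simultaneously retaining enough mass to match $\ell(\tilde{Q}_k)^\beta$ from below. This bookkeeping is the principal obstacle and the main point where the argument departs from the simpler Euclidean Lebesgue analogue of \cite{OV}.
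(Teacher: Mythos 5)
Your overall strategy is the right one, and your treatment of the genuinely new content of the lemma --- item (3) --- essentially coincides with the paper's: cover $\bigcup_j Q_j$ by the $\tilde{Q}_k$'s together with the $Q_{j_k}$'s not absorbed by any $\tilde{Q}_m$, use the lower bound $\ell(\tilde{Q}_k)^\beta \le \sum_{Q_{j_i}\subseteq \tilde{Q}_k}\ell(Q_{j_i})^\beta$ for the middle inequality, and test the packing condition (2) against an arbitrary dyadic cover for the last inequality. Note, however, that the paper does not reconstruct the selection at all: it simply invokes \cite[Proposition 2.11]{Chen-Spector} for the existence of $\{Q_{j_k}\}$ and $\{\tilde{Q}_k\}$ with properties (1), (2) and the lower bound, and only proves (3). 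You chose to rebuild the selection from scratch, and that is where the argument breaks down.

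The gap is in the construction inside each $\tilde{Q}_k$. First, your greedy truncation is performed in an unspecified order, and an adversarial order concentrates the selected mass in a small subcube, violating (2) at intermediate scales: with $d=1$, $\beta=\tfrac12$, $\tilde{Q}_k=[0,1)$ and the family consisting of all $2^N$ dyadic intervals of length $2^{-N}$, selecting left to right until the sum first exceeds $1$ keeps roughly $2^{N/2}$ intervals all inside $[0,2^{-N/2})$, giving mass $\approx 1$ against the allowance $2\cdot 2^{-N/4}$. Second, the proposed repair --- recursing on the maximal bad subcubes of $\tilde{Q}_k$ and keeping every $Q_j$ not strictly inside a bad subcube --- discards nothing in this example, because every dyadic interval of generation $<N$ is bad while the $Q_j$ themselves are not; the recursion therefore terminates having kept the entire family, whose mass $2^{N/2}$ violates (2) already at the top scale of $\tilde{Q}_k$. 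What is actually needed (and what the cited proposition provides) is a single top-down stopping time in which the truncation is applied at the first dyadic scale where the running load exceeds $\ell(R)^\beta$, so that the stopped cubes are created dynamically (in the example above they are generation-$(N-1)$ intervals, not $[0,1)$) and the selection is automatically spread out; your static definition of $\tilde{Q}_k$ as a maximal bad cube of the original family, combined with an order-free greedy pass and a recursion that never truncates, does not yield (2), and it is also not justified that the lower bound $\ell(\tilde{Q}_k)^\beta\le\sum_{Q_{j_i}\subseteq\tilde{Q}_k}\ell(Q_{j_i})^\beta$ survives whatever discarding the recursion is meant to perform.
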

\begin{proof}
   An application of Proposition 2.11 in \cite{Chen-Spector} yields a subfamily $\{Q_{j_k} \}_k$ of $\{Q_j \}_j$ and a family of non-overlapping ancestors $\{ \Tilde{Q}_k\}_k$ which satisfy the properties (1), (2) and 

\begin{align*}
\ell(\tilde{Q}_k)^\beta \leq \sum_{Q_{j_i} \subset \tilde{Q}_k}\ell(Q_{j_i})^\beta \text{ for each }\tilde{Q}_k.
\end{align*}
 Moreover, using the subadditive of $\mathcal{H}^{\beta, Q_0}_\infty$, we obtain 
\begin{align}\label{additionalpropertyforpacking2}
     \nonumber \mathcal{H}^{\beta, Q_0}_\infty(\cup_j Q_j) &\leq \sum_{k, Q_{j_k} \not \subseteq \tilde{Q}_m} \ell(Q_{j_k})^\beta + \sum_k \ell(\Tilde{Q}_k)^\beta\\\nonumber
      &\leq \sum_{k, Q_{j_k} \not \subseteq \tilde{Q}_m} \ell(Q_{j_k})^\beta + \sum_{k, Q_{j_k} \subseteq \tilde{Q}_m} \ell(Q_{j_k})^\beta \\ 
      & = \sum_k \ell(Q_{j_k})^\beta .
  \end{align}
Now, it is left to show that 
\begin{align}\label{packingconditionforestimateitselfsunion}
    \sum_k \ell(Q_{j_k})^\beta   \leq  2 \mathcal{H}^{\beta, Q_0}_\infty (\cup_j Q_j).
\end{align}
 To this end, we let $\{Q_m\}_m \subseteq \mathcal{D}(Q_0)$ be a maximal collection of dyadic cubes such that 
\begin{align*}
    \bigcup_j Q_j \subseteq \bigcup_m Q_m.
\end{align*}
The property (2) then yields that
\begin{align*}
    \sum_k \ell(Q_{j_k})^\beta & = \sum_m \sum_{k, Q_{j_k} \subseteq Q_m} \ell(Q_{j_k})^\beta \\
    &\leq \sum_m 2\ell(Q_m)^\beta
\end{align*}
 and (\ref{packingconditionforestimateitselfsunion}) follows by taking the infimum over all such families of dyadic cubes $\{ Q_m\}_m$.
\end{proof}
The condition (2) in Lemma \ref{packing2} is called packing condition, and an important application is that for any $f \geq 0$, one has the estimate
\begin{align}\label{packingestimate}
    \sum_k \int_{Q_{j_k}} f \; d \mathcal{H}^{\beta, Q_0}_\infty \leq 2 \int_{\bigcup_k Q_{j_k}} f \;  \mathcal{H}^{\beta, Q_0}_\infty.
\end{align}

The next result refines the dyadic Calderón–Zygmund decomposition with respect to Hausdorff content, originally obtained in \cite[Theorem 3.4]{Chen-Spector}. Notably, we do \emph{not} require \(f\) to be \(\mathcal{H}^{\beta}_\infty\)-quasicontinuous, a condition assumed in \cite{Chen-Spector}. Our refined version of the theorem will be pivotal in the proof of Theorem~\ref{lemmaforcorollay3}, and we include a complete argument for the reader’s convenience.

\begin{theorem}\label{cz}
Let $Q_0$ be a cube and $\lambda>0$. If \begin{align*}
\mathcal{H}^{\beta, Q_0}_\infty (\{ x \in \mathbb{R}^d: \mathcal{M}_{\mathcal{H}^{\beta,Q_0}_\infty} f(x) > \lambda \}) < \infty,    
\end{align*}
 then there exists a countable collection of non-overlapping dyadic cubes $\{ Q_k\}$ subordinate to $Q_0$ such that 
\begin{enumerate}[label=(\roman*)]
    \item
    $\bigcup_k Q_k = \{ x \in \mathbb{R}^d: \mathcal{M}_{\mathcal{H}^{\beta,Q_0}_\infty} f(x) > \lambda \}$,
     \item $\{Q_k\}$ is the maximal subcollection of all the dyadic cubes contained in $$\{x \in \mathbb{R}^d: \mathcal{M}_{\mathcal{H}^{\beta,Q_0}_\infty} f(x) > \lambda \}.$$
    \item $\lambda< \frac{1}{\ell(Q_k)^\beta} \int_{Q_k} |f| \; d\mathcal{H}^{\beta,Q_0}_\infty \leq 2^\beta \lambda$ for each $k$,
    \item $|f(x)| \leq \lambda$ for $\mathcal{H}^\beta$ -a.e. $x \not\in \bigcup_k Q_k $,
    %\item $\mathcal{H}^{\beta, Q_0}_{\infty}(\cup_k Q_k) \leq \frac{2}{\lambda} \int_{\cup_k Q_k} f \; d \mathcal{H}^{\beta, Q_0}_\infty .$
\end{enumerate}
\end{theorem}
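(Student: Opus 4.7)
The plan is a standard dyadic stopping-time construction adapted to the Choquet integral framework; the only nontrivial ingredient is the pointwise bound on $|f|$ outside the selected cubes, since we no longer assume $f$ is $\mathcal{H}^\beta_\infty$-quasicontinuous.

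\textbf{Step 1 (selection).} Set $E_\lambda := \{x \in \mathbb{R}^d : \mathcal{M}_{\mathcal{H}^{\beta,Q_0}_\infty} f(x) > \lambda\}$. For each $x \in E_\lambda$ the definition of the dyadic maximal function produces at least one $Q \in \mathcal{D}(Q_0)$ containing $x$ with $\frac{1}{\ell(Q)^\beta}\int_Q |f|\,d\mathcal{H}^{\beta,Q_0}_\infty > \lambda$. Since the chain of dyadic ancestors of any cube in $\mathcal{D}(Q_0)$ is finite and terminates at $Q_0$, I select the largest such cube containing $x$. Let $\{Q_k\}$ be the collection of these maximal cubes (after removing duplicates). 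Maximality immediately forces them to be pairwise non-overlapping (two overlapping dyadic cubes are nested), and every $x \in E_\lambda$ lies in exactly one $Q_k$, yielding (i) and (ii).

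\textbf{Step 2 (averages, property (iii)).} The lower bound in (iii) is the selection criterion itself. For the upper bound, let $\hat Q_k$ denote the dyadic parent of $Q_k$ in $\mathcal{D}(Q_0)$; by maximality, $\hat Q_k$ was not selected, so $\frac{1}{\ell(\hat Q_k)^\beta}\int_{\hat Q_k}|f|\,d\mathcal{H}^{\beta,Q_0}_\infty \le \lambda$. Using monotonicity of the Choquet integral under domain inclusion,
\[
\int_{Q_k}|f|\,d\mathcal{H}^{\beta,Q_0}_\infty \le \int_{\hat Q_k}|f|\,d\mathcal{H}^{\beta,Q_0}_\infty \le \lambda\,\ell(\hat Q_k)^\beta = 2^\beta \lambda\,\ell(Q_k)^\beta.
\]

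\textbf{Step 3 (pointwise bound, property (iv)).} This is the main obstacle, since the proof in \cite[Theorem 3.4]{Chen-Spector} invoked quasicontinuity exactly here. For any $x \in Q_0 \setminus \bigcup_k Q_k$ and any dyadic cube $Q \in \mathcal{D}(Q_0)$ containing $x$, the averaging inequality $\frac{1}{\ell(Q)^\beta}\int_Q|f|\,d\mathcal{H}^{\beta,Q_0}_\infty \le \lambda$ must hold: otherwise $Q$ would itself have been selected or would be contained in a selected cube, forcing $x \in \bigcup_k Q_k$. I would then argue by contradiction: if the set $A = \{x \in Q_0 \setminus \bigcup_k Q_k : |f(x)| > \lambda\}$ had positive $\mathcal{H}^\beta$-measure, a Vitali-type covering of $A$ by small dyadic subcubes in $\mathcal{D}(Q_0)$ that lie outside $\bigcup_k Q_k$, combined with the averaging inequality above and the layer-cake formula applied to these cubes, would yield a contradiction via the dyadic differentiation property for the Choquet integral with respect to $\mathcal{H}^{\beta,Q_0}_\infty$ (namely that $\frac{1}{\ell(Q)^\beta}\int_Q|f|\,d\mathcal{H}^{\beta,Q_0}_\infty \to |f(x)|$ as $\ell(Q) \to 0$ at $\mathcal{H}^\beta$-a.e.\ $x$).

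The central subtlety, and the only place the present proof departs from \cite[Theorem 3.4]{Chen-Spector}, is this differentiation step for general (possibly non-quasicontinuous) locally integrable $f$; everything else is routine bookkeeping of the stopping-time construction once the monotonicity and strong subadditivity \eqref{subadditivitydyadic} of the dyadic Choquet integral are in hand.
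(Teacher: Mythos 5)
Your Steps 1 and 2 reproduce the paper's construction: the selected cubes are the maximal dyadic cubes on which the average exceeds $\lambda$ (the paper phrases the selection as the maximal subcollection of dyadic cubes contained in the level set of $\mathcal{M}_{\mathcal{H}^{\beta,Q_0}_\infty}f$, which is the same family once one checks the two descriptions agree; neither you nor the paper dwells on this, and it is not where the content of the theorem lies), and the upper bound in (iii) comes from the parent cube exactly as in the paper.

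The gap is in Step 3. The differentiation statement you invoke --- that $\ell(Q)^{-\beta}\int_Q |f|\,d\mathcal{H}^{\beta,Q_0}_\infty \to |f(x)|$ as $\ell(Q)\to 0$ for $\mathcal{H}^\beta$-a.e.\ $x$ --- is false for $\beta<d$: the normalizing factor is $\ell(Q)^\beta=\mathcal{H}^{\beta,Q_0}_\infty(Q)$ rather than a measure, and already for $f=\chi_E$ with $E$ a $\beta$-dimensional set the averages $\mathcal{H}^{\beta,Q_0}_\infty(E\cap Q)/\ell(Q)^\beta$ need not converge to $\chi_E(x)$. What is true, and is exactly what (iv) requires, is only the one-sided bound
\[
\limsup_{Q\to x}\ \frac{1}{\ell(Q)^\beta}\int_Q |f|\,d\mathcal{H}^{\beta,Q_0}_\infty \ \ge\ |f(x)| \qquad \text{for } \mathcal{H}^\beta\text{-a.e. } x,
\]
which, combined with your (correct) observation that every dyadic $Q\ni x$ with $x\notin\bigcup_k Q_k$ has average at most $\lambda$, immediately gives $|f(x)|\le\lambda$ a.e.\ off the selected cubes. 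This one-sided inequality is precisely Theorem \ref{diffthm} of the paper, and its proof is not the Vitali-plus-layer-cake argument you sketch: it proceeds by fixing a rational $t$, applying the density lemma (Lemma \ref{lem:ChenSpector3.2}) to the superlevel set $E_t=\{|f|>t\}$ to get $\limsup_{Q\to x}\mathcal{H}^{\beta,Q_0}_\infty(E_t\cap Q)/\ell(Q)^\beta=1$ at $\mathcal{H}^\beta$-a.e.\ point of $E_t$, and then bounding the average from below by $t$ times this density. Since this is the one step where the absence of measurability/quasicontinuity actually bites (as you yourself note), deferring it to a mis-stated and unproved differentiation principle leaves the proof incomplete; correcting the statement to the $\limsup$ form and supplying the density-lemma argument closes the gap.
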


Before proving Theorem \ref{cz}, we need two auxiliary results regarding the local behavior of Hausdorff content. The first lemma, stated below, is taken from \cite[Lemma 3.2]{Chen-Spector}.

\begin{lemma}[{\cite[Lemma 3.2]{Chen-Spector}}]\label{lem:ChenSpector3.2}
Let \(Q_0\) be a cube in \(\mathbb{R}^d\), \(E \subset \mathbb{R}^d\)(not necessarily measurable) and \(0 < \beta \le d\). Then 
\[
  \limsup_{Q \to x} \frac{\mathcal{H}^{\beta,Q_0}_\infty (E \cap Q)}{\mathcal{H}^{\beta,Q_0}_\infty (Q)} =1
\quad
\text{for \(\mathcal{H}^\beta\)-almost every \(x \in E\),}
\]
where the limit is taken over all dyadic subcubes \(Q\in \mathcal{D} (Q_0)\) containing \(x\) with \(\ell(Q) \to 0\).
\end{lemma}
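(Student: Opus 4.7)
My plan is a self-improvement (``tilting'') argument at a fixed dyadic scale. The upper bound $\limsup \le 1$ is immediate since $\mathcal{H}^{\beta,Q_0}_\infty(E \cap Q) \le \mathcal{H}^{\beta,Q_0}_\infty(Q) = \ell(Q)^\beta$. Taking a countable union over $\alpha \nearrow 1$ through rationals reduces the task to showing that, for each fixed $\alpha \in (0,1)$ and any choice of $\alpha' \in (\alpha, 1)$, the set
\[
B := \Bigl\{x \in E \cap Q_0 : \limsup_{Q \to x} \mathcal{H}^{\beta,Q_0}_\infty(E\cap Q)/\ell(Q)^\beta \le \alpha \Bigr\}
\]
has $\mathcal{H}^\beta$-measure zero. (For $x \in E \setminus Q_0$ there are no admissible dyadic cubes, so those points are ignored.)

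To exploit the density hypothesis, I stratify $B$ by the waiting time after which it kicks in. Let $Q_n(x) \in \mathcal{D}(Q_0)$ be the unique dyadic cube of side $2^{-n}\ell(Q_0)$ containing $x$, and set
\[
B_N := \{x \in B : \mathcal{H}^{\beta,Q_0}_\infty(E \cap Q_n(x)) \le \alpha' \, \ell(Q_n(x))^\beta \text{ for all } n \ge N\};
\]
then $B = \bigcup_N B_N$ by the definition of limsup, so it suffices to show $\mathcal{H}^\beta(B_N)=0$ for each $N$. I will work with the scale-restricted dyadic content
\[
\mathcal{H}^{\beta,Q_0}_{\infty,\delta}(A) := \inf\Bigl\{ \sum_i \ell(R_i)^\beta : A \subset \bigcup_i R_i,\ R_i \in \mathcal{D}(Q_0),\ \ell(R_i) \le \delta\Bigr\},
\]
which for $A \subset Q_0$ and $\delta = 2^{-N'}\ell(Q_0)$ is bounded above by $2^{(d-\beta)N'}\ell(Q_0)^\beta$, and in particular is finite.

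Fix $N' \ge N$, let $\epsilon>0$ and $\delta = 2^{-N'}\ell(Q_0)$, and take a cover $\{R_i\} \subset \mathcal{D}(Q_0)$ of $B_N$ with $\ell(R_i)\le \delta$ and $\sum_i \ell(R_i)^\beta \le \mathcal{H}^{\beta,Q_0}_{\infty,\delta}(B_N)+\epsilon$. Passing to maximal elements and dropping cubes disjoint from $B_N$ makes $\{R_i\}$ pairwise disjoint without increasing the sum; each surviving $R_i$ then equals $Q_{n_i}(x_i)$ for some $x_i \in B_N$ with $n_i \ge N' \ge N$, so membership in $B_N$ gives $\mathcal{H}^{\beta,Q_0}_\infty(E \cap R_i) \le \alpha' \ell(R_i)^\beta$. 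The key observation is that this content is realized by covers lying in $\mathcal{D}(R_i)$ (any strictly larger dyadic cube in a competing cover can be replaced by $R_i$ itself, and any disjoint cube discarded), so I select $\{S_{i,j}\}_j \subset \mathcal{D}(R_i)$ covering $E \cap R_i$ with $\sum_j \ell(S_{i,j})^\beta \le \alpha' \ell(R_i)^\beta+\epsilon \,2^{-i}$. The concatenated family $\{S_{i,j}\}_{i,j}$ is admissible for $\mathcal{H}^{\beta,Q_0}_{\infty,\delta}(B_N)$ and yields the self-improvement
\[
\mathcal{H}^{\beta,Q_0}_{\infty,\delta}(B_N) \;\le\; \alpha' \, \mathcal{H}^{\beta,Q_0}_{\infty,\delta}(B_N) + (\alpha'+1)\epsilon.
\]
By finiteness, rearranging and sending $\epsilon \to 0$ give $\mathcal{H}^{\beta,Q_0}_{\infty,\delta}(B_N)=0$ for every $N'\ge N$. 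Since a dyadic cube of side $\ell$ fits inside a ball of radius $\sqrt d\,\ell/2$, this yields $\mathcal{H}^\beta_{\sqrt d\,\delta/2}(B_N)=0$, and letting $N' \to \infty$ gives $\mathcal{H}^\beta(B_N)=0$, completing the argument.

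The main obstacle is the non-additivity of $\mathcal{H}^{\beta,Q_0}_\infty$, which blocks any direct attempt to add contents over disjoint cubes. Two devices circumvent this: passing to the scale-restricted content $\mathcal{H}^{\beta,Q_0}_{\infty,\delta}$, which is finite and thus tolerates the rearrangement needed to close the self-improvement loop, and the dyadic containment property, which ensures every refined cover of $E \cap R_i$ can be taken inside $R_i$ and hence at a compatible scale.
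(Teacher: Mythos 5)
Your argument is correct. Note that the paper does not prove this lemma at all: it is imported verbatim as \cite[Lemma 3.2]{Chen-Spector}, so there is no internal proof to compare against. What you have written is a legitimate self-contained proof in the same spirit as the density arguments used in that reference: reduce to the sets $B_N$ of points where the $E$-density stays below $\alpha'<1$ at all scales finer than $2^{-N}$, and then run a self-improvement $\mathcal{H}^{\beta,Q_0}_{\infty,\delta}(B_N)\le \alpha'\,\mathcal{H}^{\beta,Q_0}_{\infty,\delta}(B_N)+O(\epsilon)$ at the scale-restricted content, which is finite and therefore absorbable. The two devices you isolate — restricting the admissible covers to scales $\le\delta$ so that the quantity being bootstrapped is finite, and the fact that $\mathcal{H}^{\beta,Q_0}_\infty(E\cap R)$ is computed by covers inside $\mathcal{D}(R)$ (replace any strict dyadic ancestor by $R$ itself, discard disjoint cubes) — are exactly what make the non-additivity of the content harmless here, and the passage from $\mathcal{H}^{\beta,Q_0}_{\infty,\delta}(B_N)=0$ for all $\delta$ to $\mathcal{H}^\beta(B_N)=0$ is the right way to upgrade a content statement to a Hausdorff-measure statement.

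Two small points you use silently and should state: the identity $\mathcal{H}^{\beta,Q_0}_\infty(Q)=\ell(Q)^\beta$ for $Q\in\mathcal{D}(Q_0)$ (the lower bound needs the elementary estimate $\sum_i\ell(Q_i)^\beta\ge \ell(Q)^{\beta-d}\sum_i\ell(Q_i)^d\ge\ell(Q)^\beta$ for subcube covers, since $\beta\le d$), which justifies replacing the denominator $\mathcal{H}^{\beta,Q_0}_\infty(Q)$ by $\ell(Q)^\beta$ throughout; and the half-open convention for dyadic cubes, so that $Q_n(x)$ is genuinely unique and the stratification $B=\bigcup_N B_N$ together with the identification of each surviving $R_i$ as some $Q_{n_i}(x_i)$ with $x_i\in B_N$ goes through without boundary ambiguities. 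Neither is a gap, but both belong in a written-up version.
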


Next, we give a result analogous to the classical Lebesgue differentiation Theorem for Hausdorff content without assuming either measurability or quasicontinuity of the function. The proof essentially follows from a close reading of \cite[p.~987]{Chen-Spector}, but we still include it here for the sake of completeness.
\begin{theorem}\label{diffthm}
    Let $0< \beta \leq d \in \mathbb{N}$, $Q_0 \subseteq \mathbb{R}^d$ be a cube and $f: \mathbb{R}^d \to [0,\infty)$ be a function (not necessarily measurable or quasicontinuous). Then for $\mathcal{H}^\beta$-a.e. $x \in \mathbb{R}^d$, we have
\begin{align}
    \limsup_{Q \to x} \frac{1}{\mathcal{H}^{\beta, Q_0}_\infty(Q)} \int_{Q} f\; d\mathcal{H}^{\beta,Q_0}_\infty \geq f(x),
\end{align}
   where the limit on the left-hand side is taken over all dyadic cubes $Q \in \mathcal{D}(Q_0)$ containing $x$ as $\ell(Q) \to 0.$
\end{theorem}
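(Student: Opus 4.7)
The plan is to reduce the problem to a density statement on super-level sets of $f$ and then apply Lemma~\ref{lem:ChenSpector3.2}. For each $t \geq 0$, set $E_t := \{x \in \mathbb{R}^d : f(x) > t\}$. Since the Choquet integral is defined by the layer-cake formula and $s \mapsto \mathcal{H}^{\beta,Q_0}_\infty(E_s \cap Q)$ is non-increasing, one obtains the elementary pointwise-level bound
\begin{align*}
\int_Q f\, d\mathcal{H}^{\beta,Q_0}_\infty
 &= \int_0^\infty \mathcal{H}^{\beta,Q_0}_\infty(E_s \cap Q)\, ds \\
 &\geq \int_0^t \mathcal{H}^{\beta,Q_0}_\infty(E_s \cap Q)\, ds
 \; \geq \; t\, \mathcal{H}^{\beta,Q_0}_\infty(E_t \cap Q)
\end{align*}
for every dyadic cube $Q \in \mathcal{D}(Q_0)$. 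Crucially, this estimate is purely set-theoretic and requires neither measurability nor quasicontinuity of $f$.

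Next, fix a countable dense set of levels $t \in \mathbb{Q} \cap (0, \infty)$. Lemma~\ref{lem:ChenSpector3.2} applies to the (possibly non-measurable) set $E_t$ and yields a set $N_t$ with $\mathcal{H}^\beta(N_t) = 0$ such that
\begin{equation*}
\limsup_{Q \to x} \frac{\mathcal{H}^{\beta,Q_0}_\infty(E_t \cap Q)}{\mathcal{H}^{\beta,Q_0}_\infty(Q)} = 1
\qquad \text{for every } x \in E_t \setminus N_t.
\end{equation*}
Dividing the previous layer-cake estimate by $\mathcal{H}^{\beta,Q_0}_\infty(Q)$ and passing to $\limsup_{Q \to x}$ over $Q \in \mathcal{D}(Q_0)$ with $\ell(Q) \to 0$ gives, for each such $x$,
\begin{equation*}
\limsup_{Q \to x} \frac{1}{\mathcal{H}^{\beta,Q_0}_\infty(Q)} \int_Q f\, d\mathcal{H}^{\beta,Q_0}_\infty \;\geq\; t.
\end{equation*}

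Finally, set $N := \bigcup_{t \in \mathbb{Q} \cap (0,\infty)} N_t$; by countable subadditivity of $\mathcal{H}^\beta$, one has $\mathcal{H}^\beta(N) = 0$. If $x \notin N$ and $f(x) > 0$, then for every positive rational $t < f(x)$ we have $x \in E_t \setminus N_t$, hence the displayed bound holds with that $t$; taking the supremum over such rationals yields the claim. The case $f(x) = 0$ is trivial since the averaged integrals on the left-hand side are non-negative.

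The main subtle point is recognizing that Lemma~\ref{lem:ChenSpector3.2} already accommodates arbitrary (possibly non-measurable) sets, which is precisely what permits us to sidestep the quasicontinuity hypothesis used in \cite{Chen-Spector}. Everything else is a layer-cake pointwise inequality together with a countable exhaustion of exceptional null sets, so no further structural assumption on $f$ is needed.
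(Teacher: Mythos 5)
Your proposal is correct and follows essentially the same route as the paper: the same super-level sets $E_t$ for rational $t$, the same application of Lemma~\ref{lem:ChenSpector3.2} to the (possibly non-measurable) sets $E_t$, and the same countable union of exceptional null sets. Your explicit layer-cake justification of $\int_Q f\,d\mathcal{H}^{\beta,Q_0}_\infty \geq t\,\mathcal{H}^{\beta,Q_0}_\infty(E_t\cap Q)$ is just a spelled-out version of the paper's step $\int_Q f \geq \int_{Q\cap E_t} t\,d\mathcal{H}^{\beta,Q_0}_\infty$.
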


\begin{proof}
    For every $t > 0$, we observe that one has the inclusion
\begin{align}\label{inclusionfotraion}
\nonumber   & \left\{x \in \mathbb{R}^d : \      \limsup_{Q \to x} \frac{1}{\mathcal{H}^{\beta,Q_0}_\infty (Q)} \int_{Q} f\;  d\mathcal{H}^{\beta,Q_0}_\infty <f(x) \right\} \\
    \subset &\bigcup_{t \in \mathbb{Q}^{+}} \left\{x \in \mathbb{R}^d : \      \limsup_{Q \to x} \frac{1}{\mathcal{H}^{\beta,Q_0}_\infty (Q)} \int_{Q} f\;  d\mathcal{H}^{\beta,Q_0}_\infty< t <f(x) \right\}, 
\end{align}
where $\mathbb{Q}^+$ denotes the set of all rational numbers. Moreover, for every $t \in \mathbb{Q}^+$, define
\begin{align*}
    E_t := \{x \in \mathbb{R}^d: f(x) >t \}. 
\end{align*}
   By Lemma~\ref{lem:ChenSpector3.2}, for \(\mathcal{H}^\beta\)-almost every \(x \in E_t\), we have
\begin{align*}
    \limsup_{Q \to x} \frac{1}{\mathcal{H}^{\beta,Q_0}_\infty(Q)} \int_{Q} f\;  d\mathcal{H}^{\beta,Q_0}_\infty
    & \geq  \limsup_{Q \to x} \frac{1}{\mathcal{H}^{\beta,Q_0}_\infty(Q)} \int_{Q \cap E_t} t\;  d\mathcal{H}^{\beta,Q_0}_\infty\\
    & = \limsup_{Q \to x}\frac{\mathcal{H}^{\beta,Q}_\infty(Q \cap E_t)}{\mathcal{H}^{\beta,Q_0}_\infty(Q)}  \ t\\
    & = t.
\end{align*}
 That is, the set 
\begin{align*}
    &E_t \setminus \left\{x \in \mathbb{R}^d : \      \limsup_{Q \to x} \frac{1}{\mathcal{H}^{\beta,Q_0}_\infty (Q)} \int_{Q} f\;  d\mathcal{H}^{\beta,Q_0}_\infty  \geq f(x) \right\}\\
   = & \left\{x \in \mathbb{R}^d : \      \limsup_{Q \to x} \frac{1}{\mathcal{H}^{\beta,Q_0}_\infty (Q)} \int_{Q} f\;  d\mathcal{H}^{\beta,Q_0}_\infty< t <f(x) \right\}
\end{align*}
has $\mathcal{H}^\beta$ measure zero. It then follows from (\ref{inclusionfotraion}) that
\begin{align*}
    &\mathcal{H}^\beta_\infty \left( \left\{x \in \mathbb{R}^d : \      \limsup_{Q \to x} \frac{1}{\mathcal{H}^{\beta,Q_0}_\infty (Q)} \int_{Q} f\;  d\mathcal{H}^{\beta,Q_0}_\infty <f(x) \right\} \right)\\
    \leq & \sum_{t \in \mathbb{Q}^+} \mathcal{H}^\beta_\infty \left(\left\{x \in \mathbb{R}^d : \      \limsup_{Q \to x} \frac{1}{\mathcal{H}^{\beta,Q_0}_\infty (Q)} \int_{Q} f\;  d\mathcal{H}^{\beta,Q_0}_\infty< t <f(x) \right\} \right)\\
    =&  0,
\end{align*}
  which completes the proof.
\end{proof}

We are now ready to give the proof of Theorem  \ref{cz}.

\begin{proof}[Proof of Theorem \ref{cz}]  
    Let $\{ Q_i\}_i$ be the collection of all the dyadic cubes contained in $\{ x \in \mathbb{R}^d: \mathcal{M}_{\mathcal{H}^{\beta,Q_0}_\infty} f(x) > \lambda \}$. Since $\mathcal{H}^{\beta, Q_0}_\infty (\{ x \in \mathbb{R}^d: \mathcal{M}_{\mathcal{H}^{\beta,Q_0}_\infty} f(x) > \lambda \}) < \infty $, the maximal subcollection of $\{ Q_i\}_i$ exists and we denote it as $\{Q_k\}_k$. In particular, we have 
   \begin{align*}
       \bigcup_k Q_k = \{ x \in \mathbb{R}^d: \mathcal{M}_{\mathcal{H}^{\beta,Q_0}_\infty} f(x) > \lambda \}.
   \end{align*}
and thus 
\begin{align*}
    \lambda < \frac{1}{\ell(Q_k)^\beta} \int_{Q_k} |f| \; d \mathcal{H}^{\beta,Q_0}_\infty.
\end{align*}
Now let $\tilde{Q}_k$ denotes the parent of $Q_k$ for each $k$. Using the fact $\tilde{Q}_k$ is not contained in $\{ x \in \mathbb{R}^d: \mathcal{M}_{\mathcal{H}^{\beta,Q_0}_\infty} f(x) > \lambda \}$, we have 
\begin{align*}
    \frac{1}{2^\beta \ell(Q_k)} \int_{Q_k} |f| \; d \mathcal{H}^{\beta,Q_0}_\infty \leq \frac{1}{\ell(\tilde{Q}_k)} \int_{\tilde{Q}_k} |f| \; d \mathcal{H}^{\beta,Q_0}_\infty \leq \lambda.
\end{align*}
Finally, $(iv)$ follows by Theorem \ref{diffthm}. This completes the proof. 
  %To verify (4), we apply Lemma \ref{packing2} and use (\ref{additionalpropertyforpacking2}) to obtain a subfamily $\{ Q_{k_v}\}_v$ of $\{Q_k \}_k$ satisfying 
   %\begin{align*}
    %   \sum_{Q_{k_v} \subset Q} \ell(Q_{k_v})^\beta \leq 2 \ell(Q)^\beta \text{ for each dyadic cube } Q \in \mathcal{D}(Q_0)
   %\end{align*}
%and 
%\begin{align*}
 %   \mathcal{H}^{\beta, Q_0}_\infty (\cup_k Q_k) \leq  \sum_v \ell(Q_{k_v})^\beta.
%\end{align*}
 %   Since each $Q_{k_v}$ satisfies 
%\begin{align*}
 %    \lambda<  \frac{1}{\ell(Q_{k_v})^\beta} \int_{Q_{k_v}} |f| \;\mathcal{H}^{\beta, %Q_0}_\infty,
%\end{align*}
%we obtain
%\begin{align*}
%    \mathcal{H}^{\beta, Q_0}_\infty(\cup_k Q_k)& \leq  \sum_v \ell(Q_{k_v})^\beta \\
%   &\leq \frac{1}{\lambda} \sum_v \int_{Q_{k_v}} |f| \; d \mathcal{H}^{\beta, Q_0}_\infty\\
%   & \leq \frac{2}{\lambda} \int_{\cup_k Q_k} |f| \; d \mathcal{H}^{\beta, Q_0}_\infty,
%\end{align*}
%which verifies (iv).
\end{proof}

\section{good lambda estimates for the \(\beta\)-dimensional sharp maximal function and proof of Theorem \ref{Maintheorem3}}

In \cite{FeffermanStein}, Fefferman and Stein proves (\ref{strongstein}) based on the good lambda estimate
\begin{align}\label{Steingoodlambda}
    |\{ x \in \mathbb{R}^d: \mathcal{M}f >t \}| \leq |\{ x \in \mathbb{R}^d: \mathcal{M}^{\#}f > \frac{t}{A}  \}| + \frac{2}{A} | \{ x \in\mathbb{R}^d: 
\mathcal{M}f > 2^{-n-1}t  \}| 
\end{align}
for all positive $t$ and $A$. Here, we give an extension of (\ref{Steingoodlambda}) in the context of Hausdorff content whose proof is parallel to the one given in \cite{FeffermanStein}. There are some difficulties due to the nonlinearity of the Choquet integral, which we overcome through the use of the packing condition in Lemma \ref{packing2}.
\begin{theorem}\label{lemmaforcorollay3}
    Let $0<\beta \leq  d \in\mathbb{N}$, $Q_0$ be a cube in $\mathbb{R}^d$, $f$ be a function in $\mathbb{R}^d$ and $\mu : (0, \infty) \to [0, \infty]$ be defined as
\begin{align}
    \mu(t) : =\mathcal{H}^{\beta, Q_0}_\infty(\{x \in \mathbb{R}^d: \mathcal{M}_{\mathcal{H}^{\beta,Q_0}_\infty} f(x) >t \}).
\end{align}
 Then for any $t>0$ and $A>0$, we have
 \begin{align}
     \mu(t) \leq \mathcal{H}^{\beta, Q_0}_\infty( \{ x \in \mathbb{R}^d: \mathcal{M}^{\#}_{\beta, Q_0} f  (x) > \frac{t}{A}\})+ \frac{8}{A} \mu( 2^{-\beta-2} t) ,
 \end{align}
where $\mathcal{M}^{\#}_{\beta, Q_0}f $ is the dyadic sharp maximal function defined by
\begin{align*}
    \mathcal{M}^{\#}_{\beta, Q_0}f (x) : =  \sup_{ Q \in \mathcal{D}(Q_0)} \inf_{c \in \mathbb{R}} \chi_{Q} (x)\frac{1}{\ell(Q)^\beta} \int_Q |f-c| \; d \mathcal{H}^{\beta, Q_0}_\infty.
\end{align*}
\end{theorem}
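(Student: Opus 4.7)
My plan follows the classical Fefferman--Stein argument: perform a Calderón--Zygmund decomposition at level $\lambda=2^{-\beta-2}t$, derive a pointwise estimate via the weak-type $(1,1)$ bound for the dyadic maximal function, and invoke the packing Lemma \ref{packing2} to control $\sum\ell(\cdot)^\beta$ at the summation step. Two easy reductions: if $\mu(2^{-\beta-2}t)=\infty$ the claim is trivial, so assume it is finite; and if $A<8$ then $\tfrac{8}{A}\mu(\lambda)\ge\mu(\lambda)\ge\mu(t)$ already, so assume $A\ge 8$. Applying Theorem \ref{cz} at level $\lambda$ yields non-overlapping dyadic CZ cubes $\{Q_k\}$ with $\bigcup Q_k=\{\mathcal{M}_{\mathcal{H}^{\beta,Q_0}_\infty}f>\lambda\}$ and $\lambda<\ell(Q_k)^{-\beta}\int_{Q_k}|f|\,d\mathcal{H}^{\beta,Q_0}_\infty\le 2^\beta\lambda=t/4$. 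Applying Lemma \ref{packing2} to $\{Q_k\}$ produces a packed subfamily $\{Q_{j_m}\}$ and non-overlapping ancestors $\{\tilde Q_m\}$; let $\mathcal{R}$ consist of those $Q_{j_m}$ not contained in any $\tilde Q_\ell$ together with all $\tilde Q_m$, so that $\{\mathcal{M}_{\mathcal{H}^{\beta,Q_0}_\infty}f>t\}\subseteq\bigcup Q_k\subseteq\bigcup_{R\in\mathcal R}R$ and, by property (3) of Lemma \ref{packing2}, $\sum_{R\in\mathcal R}\ell(R)^\beta\le 2\mu(\lambda)$.

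The crucial observation is that every $R\in\mathcal R$ behaves like a CZ cube. If $R=\tilde Q_m$, then $R$ strictly contains some $Q_k$, so by the maximality clause of Theorem \ref{cz} there exists $z\in R$ with $\mathcal{M}_{\mathcal{H}^{\beta,Q_0}_\infty}f(z)\le\lambda$, giving the uniform average bound $\ell(R)^{-\beta}\int_R|f|\,d\mathcal{H}^{\beta,Q_0}_\infty\le\lambda\le t/4$; and the same maximality forces every dyadic cube $Q\ni x\in R$ with $\ell(Q)^{-\beta}\int_Q|f|\,d\mathcal{H}^{\beta,Q_0}_\infty>t$ to satisfy $Q\subseteq R$ (otherwise $Q\supsetneq R\supsetneq Q_k$ would contain a point where $\mathcal{M}_{\mathcal{H}^{\beta,Q_0}_\infty}f\le\lambda<t$). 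I then split $\mathcal R$ into bad cubes $R\subseteq\{\mathcal{M}^{\#}_{\beta,Q_0}f>t/A\}$, whose union is absorbed into the first term of the conclusion, and good cubes $R$ containing some $y_R$ with $\mathcal{M}^{\#}_{\beta,Q_0}f(y_R)\le t/A$. For each good $R$ pick $c_R$ with $\ell(R)^{-\beta}\int_R|f-c_R|\,d\mathcal{H}^{\beta,Q_0}_\infty\le 2t/A$; strong subadditivity applied to the pointwise inequality $|c_R|\le|f|+|f-c_R|$ gives $|c_R|\le t/4+2t/A\le t/2$, so for $x\in R\cap\{\mathcal{M}_{\mathcal{H}^{\beta,Q_0}_\infty}f>t\}$ the containment $Q\subseteq R$ yields $\mathcal{M}_{\mathcal{H}^{\beta,Q_0}_\infty}((f-c_R)\chi_R)(x)>t-|c_R|\ge t/2$. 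The weak-type bound \eqref{dyadicweaktypeestimate11} then gives $\mathcal{H}^{\beta,Q_0}_\infty(R\cap\{\mathcal{M}_{\mathcal{H}^{\beta,Q_0}_\infty}f>t\})\le\tfrac{4C}{A}\ell(R)^\beta$, and summing over good $R$ using $\sum_{R\in\mathcal R}\ell(R)^\beta\le 2\mu(\lambda)$ produces the claimed bound $\mathcal{H}^{\beta,Q_0}_\infty(\{\mathcal{M}^{\#}_{\beta,Q_0}f>t/A\})+\tfrac{8C}{A}\mu(\lambda)$, which matches the theorem's constant once one tracks the sharp value $C=1$ in the dyadic weak-type inequality.

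The principal obstacle is the nonlinearity of the Choquet integral: the naive sum $\sum_k\ell(Q_k)^\beta$ over the CZ cubes is \emph{not} controlled by $\mu(\lambda)=\mathcal{H}^{\beta,Q_0}_\infty(\bigcup Q_k)$, since $\mathcal{H}^{\beta,Q_0}_\infty$ is only subadditive and many small cubes may be covered more cheaply by a single larger ancestor. Lemma \ref{packing2} is precisely the device that closes this gap by replacing $\{Q_k\}$ with the near-equivalent covering family $\mathcal R$ satisfying $\sum_{R\in\mathcal R}\ell(R)^\beta\le 2\mu(\lambda)$; the main technical check one must make is that the ancestor cubes inherit from the CZ-maximality both the small-average property and the containment of every witness cube, so that the weak-type argument applies to them verbatim.
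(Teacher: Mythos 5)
Your proposal is correct, and at the decisive step it takes a genuinely different (though classical) route from the paper. Both arguments share the same skeleton: a Calder\'on--Zygmund decomposition at level $2^{-\beta-2}t$, the packing Lemma \ref{packing2} to replace the CZ family by a covering family $\mathcal{R}$ with $\sum_{R\in\mathcal R}\ell(R)^\beta\le 2\mu(2^{-\beta-2}t)$ (this is exactly how the paper, too, defuses the nonlinearity of the content), the dichotomy between cubes contained in $\{\mathcal{M}^{\#}_{\beta,Q_0}f>t/A\}$ and the rest, and the bound $|c_R|\le t/2$ obtained from the small average $\le t/4$ plus the small oscillation. Where you diverge is in how the remaining ``good'' cubes are handled: the paper performs a \emph{second} CZ decomposition at level $t$, shows each level-$t$ cube $Q_k^t\subseteq Q'$ carries oscillation mass $\int_{Q_k^t}|f-c_{Q'}|\,d\mathcal{H}^{\beta,Q_0}_\infty\ge\frac{t}{2}\ell(Q_k^t)^\beta$, and then packs the level-$t$ cubes a second time to sum these against $\int_{Q'}|f-c_{Q'}|\le\frac{t}{A}\ell(Q')^\beta$; you instead localize, observe that every witness cube for $\mathcal{M}_{\mathcal{H}^{\beta,Q_0}_\infty}f>t$ at a point of $R$ must sit inside $R$ (your verification of this for the ancestor cubes $\tilde Q_m$ is the right technical point and is sound), and invoke the weak-type $(1,1)$ bound \eqref{dyadicweaktypeestimate11} for $\mathcal{M}_{\mathcal{H}^{\beta,Q_0}_\infty}\bigl((f-c_R)\chi_R\bigr)$ at level $t/2$. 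Your route avoids the second decomposition and the second application of the packing lemma, at the cost of importing the constant $C$ from \eqref{dyadicweaktypeestimate11} into the conclusion: you obtain $\frac{8C}{A}\mu(2^{-\beta-2}t)$ rather than $\frac{8}{A}\mu(2^{-\beta-2}t)$, and your claim that one may take $C=1$ in the dyadic weak-type inequality for the Hausdorff content is not obviously true (the Choquet integral is not additive over disjoint dyadic cubes, so the usual ``constant $1$'' argument does not transfer verbatim). This is immaterial for the applications in Theorem \ref{Maintheorem3}, where only the existence of some absolute constant in place of $8$ is used, but it means your argument does not literally reproduce the stated constant. Everything else --- the reductions to $\mu(2^{-\beta-2}t)<\infty$ and $A\ge 8$, the inheritance of the small-average and localization properties by the ancestor cubes, and the final summation via $\sum_{R\in\mathcal R}\ell(R)^\beta\le 2\mu(2^{-\beta-2}t)$ --- checks out.
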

\begin{proof}

Fix $t>0$, we apply Theorem \ref{cz} and obtain countable collections of non-overlapping dyadic cubes $\{Q_k^t  \}_k$ and $\{Q_k^{2^{-\beta -2} t}  \}_k$ such that properties (i) to (iv) in  Theorem \ref{cz} hold for $\lambda=t$ and $\lambda = 2^{-\beta-2}t$ respectively.  That is, the collection of cubes $\{Q_k^t  \}_k$ and $\{Q_k^{2^{-\beta -2} t}  \}_k$ satisfy
\begin{enumerate}[label=(\subscript{A}{{\arabic*}})]
    \item
    $\bigcup_k Q_k^t = \{ x \in \mathbb{R}^d: \mathcal{M}_{\mathcal{H}^{\beta,Q_0}_\infty} f(x) > t \}$,
     \item $\{Q_k^t\}_k$ is the maximal subcollection of all the dyadic cubes contained in the set $\{x \in \mathbb{R}^d: \mathcal{M}_{\mathcal{H}^{\beta,Q_0}_\infty} f(x) > t \},$
    \item $t< \frac{1}{\ell(Q_k^t)^\beta} \int_{Q_k^{t}} |f| \; d\mathcal{H}^{\beta,Q_0}_\infty \leq 2^\beta t$,
    \item $|f(x)| \leq t$ for $\mathcal{H}^\beta$ -a.e. $x \not\in \bigcup_k Q_k^t $
    %\item $\mathcal{H}^{\beta, Q_0}_{\infty}(\cup_k Q_k) \leq \frac{2}{\lambda} \int_{\cup_k Q_k} f \; d \mathcal{H}^{\beta, Q_0}_\infty $
\end{enumerate}
and
\begin{enumerate}[label=(\subscript{B}{{\arabic*}})]
    \item
    $\bigcup_k Q_k^{2^{-\beta -2}t } = \{ x \in \mathbb{R}^d: \mathcal{M}_{\mathcal{H}^{\beta,Q_0}_\infty} f(x) > 2^{-\beta -2}t \}$,
     \item $\{Q_k^{2^{-\beta -2} t}\}_k$ is the maximal subcollection of all the dyadic cubes contained in the set $\{x \in \mathbb{R}^d: \mathcal{M}_{\mathcal{H}^{\beta,Q_0}_\infty} f(x) > 2^{-\beta -2}t \},$
    \item $2^{-\beta-2} t < \frac{1}{\ell( Q_k^{2^{-\beta -2}t})^\beta} \int_{Q_k^{2^{-\beta -2}t}} |f| \; d\mathcal{H}^{\beta,Q_0}_\infty \leq \frac{t}{4}$,
    \item $|f(x)| \leq 2^{-\beta -2}t$ for $\mathcal{H}^\beta$ -a.e. $x \not\in \bigcup_k Q_k^{2^{-\beta -2}t} $.
    %\item $\mathcal{H}^{\beta, Q_0}_{\infty}(\cup_k Q_k) \leq \frac{2}{\lambda} \int_{\cup_k Q_k} f \; d \mathcal{H}^{\beta, Q_0}_\infty .$
\end{enumerate}
Since $t> 2^{-\beta -2}t$, we have the cubes in $\{ Q_k^t\}_k$ are subcubes of the cubes in $\{ Q_k^{-2^{-\beta-2}t}\}_k$ by ($A_2$) and $(B_2)$. Thus,
\begin{align*}
    \{ Q_k^t\}_k = \mathcal{A}_1 \cup \mathcal{A}_2,
\end{align*}
where 
\begin{align*}
    \mathcal{A}_1 : = \left\{Q_k^t : Q_k^t \subseteq Q_{k_0}^{2^{-\beta-2}t } \text{ for some } k_0 \text{ and }  Q_{k_0}^{2^{-\beta-2}t } \subseteq \{ x \in \mathbb{R}^d : \mathcal{M}^{\#}_{\beta, Q_0}f(x) >\frac{t}{A}  \} \right\},
\end{align*}
and 
 \begin{align*}
      \mathcal{A}_2 : = \left\{Q_k^t : Q_k^t \subseteq Q_{k_0}^{2^{-\beta-2}t } \text{ for some } k_0 \text{ and }  Q_{k_0}^{2^{-\beta-2}t } \not \subseteq \{ x \in \mathbb{R}^d : \mathcal{M}^{\#}_{\beta, Q_0}f(x) >\frac{t}{A}  \} \right\}.
 \end{align*}
 The subadditivity of $\mathcal{H}^{\beta, Q_0}_\infty$ then implies that
\begin{align}
    \mu(t) \leq  \mathcal{H}^{\beta, Q_0}_\infty \left(\bigcup_{Q_k^t \in \mathcal{A}_1} Q_k^t \right) +  \mathcal{H}^{\beta, Q_0}_\infty \left(\bigcup_{Q_k^t \in \mathcal{A}_2} Q_k^t \right).
\end{align}
It is easy to see that 
\begin{align}
    \bigcup_{Q_k^t \in \mathcal{A}_1} Q_k^t \subseteq \left\{x: \mathcal{M}^{\#}_{\beta, Q_0}f(x)>\frac{t}{A} \right\}
\end{align}
and thus 
\begin{align}
    \mathcal{H}^{\beta, Q_0}_\infty \left(\bigcup_{Q_k^t \in \mathcal{A}_1} Q_k^t \right) \leq \mathcal{H}^{\beta, Q_0}_\infty \left(\left\{x: \mathcal{M}^{\#}_{\beta, Q_0}f(x)>\frac{t}{A} \right\} \right).
\end{align}

Now the proof is complete if we show that 
\begin{align}
    \mathcal{H}^{\beta, Q_0}_\infty \left( \bigcup_{Q_k^t \in \mathcal{A}_2} Q_k^t  \right) \leq \frac{8}{A} \mu( 2^{-\beta -2}t ).
\end{align}
To this end, let $\{Q_k' \}_k$ be the subset of $\{ Q_k^{2^{-\beta -2}t}\}_k$ which is defined as 
\begin{align*}
    \left\{Q_k'  \right\}_k: = \left\{ Q \in \{ Q_k^{2^{-\beta -2}t} \}_k :  Q \text{ contains some cube in } \mathcal{A}_2
 \right\}.
\end{align*}
Applying Lemma \ref{packing2} to $\{ Q_k' \}_k$, we then obtain a subfamily $\{ Q_{k_v}'\}_v$ and a family of non-overlapping ancestors $\{  \Tilde{Q}_k \}_k$ such that 
 \begin{align*}
\bigcup_{k} Q_k'  \subset \bigcup_{v} Q_{k_v}' \cup \bigcup_{k} 
\Tilde{Q}_k ,
\end{align*}

\begin{align*}
\sum_{Q_{k_v}' \subset Q} \ell(Q_{k_v}')^\beta \leq 2\ell(Q)^\beta\text{ for each dyadic cube } Q \in \mathcal{D}(Q_0),
\end{align*}
 
\begin{align}\label{goodpropertiesoftildeQ}   
\mathcal{H}^{\beta, Q_0}_\infty \left( \bigcup_k 
 Q_k' \right)
      &\leq \sum_{v, Q_{k_v}' \not \subseteq  \Tilde{Q}_k} \ell(Q_{k_v}')^\beta + \sum_k \ell(\Tilde{Q}_k)^\beta \\
      & \leq  2 \mathcal{H}^{\beta, Q_0}_\infty \left( \bigcup_k 
Q_k' \right).\nonumber
\end{align}
In particular, let $Q' \in \{Q_{k_v}' \}_v \cup \{\Tilde{Q}_k \}_k$, we have $Q'$ contains a cube in $\mathcal{A}_2 \subseteq \{Q_k^{2^{-\beta -2} t} \}_k$ and thus
\begin{align*}
    Q' \not \subseteq \left\{ x: \mathcal{M}^{\#}_{\beta, Q_0}f(x)>\frac{t}{A} \right\}.
\end{align*}
It then follows from $(B_2)$,  $(B_3)$ and definition of $\mathcal{M}^{\#}_{\beta, Q_0}f$ that
\begin{align}\label{estimatearg1}
    \frac{1}{\ell(Q')^\beta}\int_{Q'} |f| \; d\mathcal{H}^{\beta, Q_0}_\infty \leq \frac{t}{4}
\end{align}
and 
\begin{align}\label{estimaearg2}
    \frac{1}{\ell(Q')^\beta} \int_{Q'} |f - c_{Q'}|\; d\mathcal{H}^{\beta, Q_0}_\infty \leq \min \left( \frac{t}{A}, \frac{t}{4} \right),
\end{align}
where 
\begin{align*}
    c_{Q'}: = \argmin_{c\in \mathbb{R}}  \frac{1}{\ell(Q')^\beta} \int_{Q'} |f - c|\; d\mathcal{H}^{\beta, Q_0}_\infty.
\end{align*}
Moreover, the combination of \eqref{subadditivitydyadic}, (\ref{estimatearg1}) and (\ref{estimaearg2}) gives
\begin{align*}
    |c_{Q'}| &= \frac{1}{\ell(Q')^\beta} \int_{Q'} |c_{Q'}| \; d\mathcal{H}^{\beta, Q_0}_\infty\\
    &\leq \frac{1}{\ell(Q')^\beta} \int_{Q'} |c_{Q'} -f |\; d\mathcal{H}^{\beta, Q_0}_\infty +  \frac{1}{\ell(Q')^\beta} \int_{Q'} |f |\; d\mathcal{H}^{\beta, Q_0}_\infty \\
    & \leq \frac{t}{4}+ \frac{t}{4}\\
    &=\frac{t}{2}.
\end{align*}
Thus, if $Q_k^t \in \mathcal{A}_2$ and $Q^t_k \subseteq  Q'$, then 
\begin{align}\label{meanvalueestimateinsideQt1}
    \int_{Q^t_k} |f(x) - c_{Q'}| \; d\mathcal{H}^{\beta, Q_0}_\infty &\geq \int_{Q^t_k} |f| \;d \mathcal{H}^{\beta, Q_0}_\infty - c_{Q'} \ell(Q^t_k)^\beta\\ \nonumber
    &\geq t \ell(Q_k^t)^\beta - \frac{t}{2} \ell(Q_k^t)^\beta \\
    &= \frac{t}{2} \ell(Q_k^t)^\beta.\nonumber
 \end{align}
Now, we apply Lemma \ref{packing2} and (\ref{additionalpropertyforpacking2}) to $\mathcal{A}_2$ and obtain a non-overlapping subfamily $\{Q_{k_j}^t \}_j$ of $\mathcal{A}_2$ satisfying
\begin{align}\label{packingconditionofa2}
    \sum_{Q_{k_j}^t \subseteq Q} \ell(Q_{k_j}^t)^\beta \leq 2 \ell(Q)^ \beta \text{ for each dyadic cube }Q \in \mathcal{D}(Q_0)
\end{align}
and 
\begin{align}\label{packingcoditionofA2}
    \mathcal{H}^{\beta, Q_0}_\infty \left(\bigcup_{Q_k^t \in \mathcal{A}_2} Q_k^t \right) \leq  \sum_v \ell(Q_{k_j}^t)^\beta.
\end{align}
Now combining estimates (\ref{meanvalueestimateinsideQt1}) and (\ref{packingcoditionofA2}), we get
\begin{align}\label{final2}
    \mathcal{H}^{\beta, Q_0}_\infty\left(\bigcup_{Q_k^t \in \mathcal{A}_2} Q_k^t \right) & \leq   \sum_j \ell(Q_{k_j}^t)^\beta \\ \nonumber
    &= \sum_v \sum_{Q_{k_j}^t \subseteq Q'_{k_v} }  \ell(Q_{k_j}^t)^\beta + \sum_k \sum_{Q_{k_j}^t \subseteq\Tilde{Q}_k  }  \ell(Q_{k_j}^t)^\beta  \\ \nonumber
     & \leq \frac{2}{t} \sum_v \sum_{Q_{k_j}^t \subseteq Q'_{k_v} }  \int_{Q'_{k_v}} |f - c_{Q'_{k_v}}|\; d \mathcal{H}^{\beta, Q_0}_\infty \\ \nonumber
    & + \frac{2}{t} \sum_k \sum_{Q_{k_j}^t \subseteq\Tilde{Q}_k  }  \int_{\Tilde{Q}_k 
} |f - c_{\Tilde{Q}_k 
}|\; d \mathcal{H}^{\beta, Q_0}_\infty, \nonumber\\ \nonumber
    & \leq \frac{4}{t} \sum_v \int_{Q'_{k_v}} |f - c_{Q'_{k_v}}|\; d \mathcal{H}^{\beta, Q_0}_\infty \\ \nonumber
    & + \frac{4}{t} \sum_k \int_{\Tilde{Q}_k 
} |f - c_{\Tilde{Q}_k 
}|\; d \mathcal{H}^{\beta, Q_0}_\infty, \nonumber
\end{align}
where the last inequality follows the packing condition (\ref{packingconditionofa2}) of $\{ Q_{k_j}^t\}_j$ and (\ref{packingestimate}). Finally, using the fact that $\{ Q_k'\}_k$ is a subfamily of $\{Q_k^{2^{-\beta -2}t } \}_k$ and estimates (\ref{goodpropertiesoftildeQ}), (\ref{estimaearg2}) and (\ref{final2}), we obtain
\begin{align*}
    \mathcal{H}^{\beta, Q_0}_\infty \left(\bigcup_{Q_k^t \in \mathcal{A}_2} Q_k^t \right) & \leq \frac{4}{t} \sum_v \int_{Q'_{k_v}} |f - c_{Q'_{k_v}}|\; d \mathcal{H}^{\beta, Q_0}_\infty \\ \nonumber
    & + \frac{4}{t} \sum_k \int_{\Tilde{Q}_k 
} |f - c_{\Tilde{Q}_k 
}|\; d \mathcal{H}^{\beta, Q_0}_\infty\\
    &\leq \frac{4}{t} \frac{t}{A} \left( \sum_v \ell(Q'_{k_v})^\beta + \sum_k \ell(\Tilde{Q}_k )^\beta \right)\\
    & \leq \frac{8}{A} \mathcal{H}^{\beta, Q_0}_\infty \left(\bigcup_k  Q_k' \right)\\
    &\leq \frac{8}{A}\mathcal{H}^{\beta, Q_0}_\infty \left(  \bigcup_k Q_k^{2^{-\beta-2} t} \right) \\
    &= \frac{8}{A} \mu ( 2^{-\beta-2}t),
\end{align*}
   which completes the proof.
\end{proof}

We next give the proof of Theorem \ref{Maintheorem3}.

\begin{proof}[Proof of Theorem \ref{Maintheorem3}]
    Let $Q_0 = [0,1] \times \dots \times [0,1] \subset \mathbb{R}^d$. By Lemma \ref{equivoftwomaximal}, we deduce there exist $C_1, C_2, C_3 >0$ such that 
 \begin{align*}
     \int_{\mathbb{R}^d} \left( \mathcal{M}_{\mathcal{H}^\beta_\infty} f \right)^p \;d \mathcal{H}^{\beta}_\infty & = p \int^\infty_0 t^{p-1} \mathcal{H}^\beta_\infty\left( \left\{  x \in \mathbb{R}^d : \mathcal{M}_{\mathcal{H}^{\beta}_\infty} f (x) >t \right\} \right) \;dt \\
     &\leq C_1 p \int^\infty_0 t^{p-1} \mathcal{H}^\beta_\infty\left( \left\{  x\in \mathbb{R}^d : \mathcal{M}_{\mathcal{H}^{\beta,Q_0}_\infty} f(x) > C_2   2^{-(\beta +d) } t\right\} \right)\; dt\\
     & = C_3  \int_{\mathbb{R}^d} \left( \mathcal{M}_{\mathcal{H}^{\beta,Q_0}_\infty} f \right)^p \;d \mathcal{H}^{\beta, Q_0}_\infty.
 \end{align*}
Moreover, it is easy to see that 
 \begin{align*}
   \mathcal{M}^{\#}_{\beta, Q_0 }f (x) \leq C_\beta \mathcal{M}^{\# }_{\beta}f (x),
\end{align*}
where $C_\beta$ is the constant given in (\ref{subadditivitydyadic}).
Thus, the proof is complete once we show that there exists $C' =C' (p,d, \beta)$ such that 
\begin{align}\label{sustitionofQ0inCorollary}
    \int_{\mathbb{R}^d} \left( \mathcal{M}^{\beta, Q_0}_{\infty}  f \right)^p \; d\mathcal{H}^{\beta, Q_0}_\infty \leq C' \int_{\mathbb{R}^d} \left( \mathcal{M}^{\#}_{\beta, Q_0 }f \right)^p \; d\mathcal{H}^{\beta, Q_0}_\infty. 
\end{align}
To this end, we apply Lemma \ref{lemmaforcorollay3} and obtain that for any $t>0$ and $A>0$, 
\begin{align}\label{estimatefromLemma2.11}
    \mu(t) \leq \mathcal{H}^{\beta, Q_0}_\infty( \{ x \in \mathbb{R}^d: \mathcal{M}^{\#}_{\beta, Q_0 }f (x) > \frac{t}{A}\})+ \frac{16}{A} \mu( 2^{-\beta-2} t) ,
\end{align}
where
\begin{align*}
    \mu(t)  =\mathcal{H}^{\beta, Q_0}_\infty \left( \left\{x \in \mathbb{R}^d: \mathcal{M}_{\mathcal{H}^{\beta,Q_0}_\infty} f(x) >t \right\} \right).
\end{align*}
In particular, for any $N>0$, using (\ref{estimatefromLemma2.11}) and change of variables, we obtain
\begin{align}\label{estimatelimittoN}
    p \int^N_0 t^{p-1} \mu (t) \; dt & \leq   p \int^N_0 t^{p-1} \mathcal{H}^{\beta, Q_0}_\infty( \{ x \in \mathbb{R}^d: \mathcal{M}^{\#}_{\beta, Q_0 }f (x) > \frac{t}{A}\})\;dt \\
    &+  \frac{8}{A} p \int^N_0 t^{p-1}  \mu( 2^{-\beta-2} t) \;dt \nonumber\\
    & \leq A^p   p \int^\infty_0 t^{p-1} \mathcal{H}^{\beta, Q_0}_\infty( \{ x \in \mathbb{R}^d: \mathcal{M}^{\#}_{\beta, Q_0 }f (x) > t\})\;dt \nonumber \\
    &+ 2^{(\beta+2)p} \frac{8}{A} p \int^N_0 t^{p-1}  \mu(  t) \;dt \nonumber\\
    &= A^p \int_{\mathbb{R}^d} \left( \mathcal{M}^{\#}_{\beta, Q_0 }f \right)^p \; d\mathcal{H}^{\beta, Q_0}_\infty  + 2^{(\beta+2)p} \frac{8}{A} p \int^N_0 t^{p-1}  \mu(  t) \;dt. \nonumber
\end{align}
Moreover, we observe that
\begin{align*}
      \int^N_0 t^{p-1}  \mu(  t) \;dt  \leq M_N   \int^N_0 t^{p-p_0-1} \;dt =   M_N   \frac{N^{p-p_0}}{p-p_0} < \infty,
\end{align*}
where $M_N:=\sup\limits_{0<t <N} t^{p_0} \mu(t) < \infty$ by  \eqref{newassumptionThm1.2}.
%Moreover, the assumption 
%\begin{align*}
 %   \int_{\mathbb{R}^d} \left( \mathcal{M}_{\mathcal{H}^\beta_\infty} f \right)^{p_0} \;d \mathcal{H}^{\beta}_\infty< \infty
%\end{align*}
%implies that for any $N>0$, 
%\begin{align*}
 %    p \int^N_0 t^{p-1}  \mu(  t) \;dt &=  p \int^N_0 t^{p-p_0} t^{p_0-1}  \mu(  t) \;dt\\
  %   &\leq C_4 N^{p-p_0}  \int_{\mathbb{R}^d} \left( \mathcal{M}_{\mathcal{H}^\beta_\infty} f \right)^{p_0} \;d \mathcal{H}^{\beta}_\infty\\
   %  &< \infty.
%\end{align*}
Therefore, we may subtract both sides of inequality (\ref{estimatelimittoN}) with $2^{(\beta+2)p} \frac{8}{A} p \int^N_0 t^{p-1}  \mu(  t) \;dt$ and obtain 
\begin{align}\label{estimateforN2}
   \left( 1- 2^{(\beta+2)p} \frac{8}{A} \right) p \int^N_0 t^{p-1}  \mu(  t) \; dt \leq A^p \int_{\mathbb{R}^d} \left( \mathcal{M}^{\#}_{\beta, Q_0 }f \right)^p \; d\mathcal{H}^{\beta, Q_0}_\infty .
\end{align}
By choosing $A = \frac{2^{(\beta+2)p}}{16}$ and letting $N \to \infty$ in inequality (\ref{estimateforN2}), we obtain
\begin{align*}
   \frac{1}{2} \int_{\mathbb{R}^d} \left( \mathcal{M}_{\mathcal{H}^{\beta,Q_0}_\infty} f \right)^{p} \;d \mathcal{H}^{\beta,Q_0}_\infty & = \lim_{N \to \infty} \left(1- \frac{1}{2} \right)  p \int^N_0 t^{p-1}  \mu(  t) \;dt\\
   &\leq \frac{2^{(\beta+2)p^2}}{16^p}\int_{\mathbb{R}^d} \left( \mathcal{M}^{\#}_{\beta, Q_0 }f \right)^p \; d\mathcal{H}^{\beta, Q_0}_\infty
\end{align*}
by monotone convergence theorem. This completes the proof of (\ref{sustitionofQ0inCorollary}).

To prove \eqref{Maincorollay3-2}, it is sufficient to show that there exist $C' =C' (p,d, \beta)$ such that 
\begin{align}\label{sustitionofQ0inCorollary weak}
    \left\| \mathcal{M}^{\beta, Q_0}_{\infty}  f \right\|_{L^{p,\infty}(\mathcal{H}^\beta_\infty)} \le C' \left\| \mathcal{M}^{\#}_{\beta, Q_0 }f   \right\| _{L^{p,\infty}(\mathcal{H}^\beta_\infty)}.
\end{align}
Let $N>0$, using again \eqref{estimatefromLemma2.11}, multiplying by $t^p$ and taking the supremum over $0<\lambda\le N$ we obtain the following estimate,
    \begin{align*}
        \sup_{0<t\le N} t^p \mu(t) \le & \frac{16}{A} \sup_{0<t \le N} t^p  \mu(2^{-\beta-2}t) +  \sup_{0<t \le N} t^p  \mathcal{H}^{\beta, Q_0}_\infty(\{ x \in \mathbb{R}^d: \mathcal{M}^{\#}_{\beta, Q_0 }f (x) > \frac{t}{A}\})\\
        = & \frac{2^{(\beta+2)p+4}}{A} \sup_{0<t \le 2^{-\beta-2}N} t^p  \mu(t) + A^p \sup_{0<t \le \frac{N}{A}} t^p  \mathcal{H}^{\beta, Q_0}_\infty(\{ x \in \mathbb{R}^d: \mathcal{M}^{\#}_{\beta, Q_0 }f (x) > t\})\\
        \le & \frac{2^{(\beta+2)p+4}}{A} \sup_{0<t \le N} t^p  \mu(t) + A^p \left\| \mathcal{M}^{\#}_{\beta, Q_0 }f   \right\| _{L^{p,\infty}(\mathcal{H}^\beta_\infty)} ,
    \end{align*}
    for all $A>0$. Observe that the first term is finite,
    \begin{equation*}
        \sup_{0<t \le N} t^p  \mu(t) \le M_N N^{p-p_0}<\infty,
    \end{equation*}
where $M_N:=\sup\limits_{0<t <N} t^{p_0} \mu(t) < \infty$ by  \eqref{newassumptionThm1.2}. We take $A>0$ such that $\frac{2^{(\beta+2)p+4}}{A}= \frac{1}{2}$, that is $A=2^{(\beta+2)p+5} $. Then, since the right-hand side is finite, we can absorb it into the left-hand side, and we obtain,
\begin{equation*}
    \sup_{0<t\le N} t^p \mu(t) \le  2^{((\beta+2)p+5)p+1} \left\| \mathcal{M}^{\#}_{\beta, Q_0 }f   \right\| _{L^{p,\infty}(\mathcal{H}^\beta_\infty)}. 
\end{equation*}
We conclude the proof of \eqref{sustitionofQ0inCorollary weak} letting $N\to \infty$.
\end{proof}

\section{pointwise estimate of fractional maximal function and sharp maximal function of $I_\alpha f$}

%\color{red}
%\normalcolor
In this section, we will first give the proof of Theorem \ref{Adamsextend1} and then provide an analogous of Theorem \ref{wheedenHausdorffcontent} based on Adams's approach. We begin with a result which can be found in \cite[Lemma 2.2]{chen2023capacitary} and \cite[Proposition 3.6]{harjulehto2024hausdorff} and is an extension of \cite[Lemma 3]{OV}.

\begin{lemma}\label{alphabetachange}
    Let $f \geq 0$ and $0 < \alpha \leq \beta \leq d \in \mathbb{N}$. Then 
    \begin{align*}
        \int_{\mathbb{R}^d} f \; d\mathcal{H}^\beta_\infty \leq  \frac{\beta }{\alpha} \left(  \int_{\mathbb{R}^d} f^{\frac{\alpha}{\beta}}  \; d\mathcal{H}^\alpha_\infty \right)^{\frac{\beta}{\alpha}}.
    \end{align*}
\end{lemma}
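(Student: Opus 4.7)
The plan is to prove Lemma~\ref{alphabetachange} by gluing a pointwise comparison between Hausdorff contents of different dimensions with a Hardy-type inequality for nonincreasing functions, via the layer cake formula applied twice.

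First, I would establish the level-set comparison
$$ \mathcal{H}^\beta_\infty(E) \leq C_{\alpha,\beta}\, \mathcal{H}^\alpha_\infty(E)^{\beta/\alpha}, \qquad E \subset \R^d, $$
valid whenever $\alpha \leq \beta$. Given any cover $E \subset \bigcup_i B(x_i, r_i)$, the elementary super-additivity $(a+b)^p \geq a^p + b^p$ for $a,b \geq 0$ and $p \geq 1$, iterated, yields $\sum_i r_i^\beta = \sum_i (r_i^\alpha)^{\beta/\alpha} \leq (\sum_i r_i^\alpha)^{\beta/\alpha}$. Multiplying by the normalizing powers of $\omega_\beta, \omega_\alpha$ and taking the infimum over covers gives the claim with $C_{\alpha,\beta} = \omega_\beta / \omega_\alpha^{\beta/\alpha}$.

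Second, the layer cake formula combined with this bound produces
$$ \int_{\R^d} f \, d\mathcal{H}^\beta_\infty = \int_0^\infty \mathcal{H}^\beta_\infty(\{f > t\}) \, dt \leq C_{\alpha,\beta} \int_0^\infty \phi(t)^{\beta/\alpha} \, dt, $$
where $\phi(t) := \mathcal{H}^\alpha_\infty(\{f > t\})$ is nonincreasing in $t$. A second application of the layer cake to the target right-hand side, together with the identity $\{f^{\alpha/\beta} > s\} = \{f > s^{\beta/\alpha}\}$, rewrites it as $(\int_0^\infty \phi(s^{\beta/\alpha}) \, ds)^{\beta/\alpha}$; the task therefore reduces to comparing two scalar integrals involving $\phi$.

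Third, the required comparison is a Hardy-type inequality that I would prove as an intermediate lemma: for every nonincreasing $\nu \geq 0$ on $[0,\infty)$ and every $p \geq 1$,
$$ p \int_0^\infty s^{p-1} \nu(s)^p \, ds \leq \left( \int_0^\infty \nu(s) \, ds \right)^p. $$
The proof uses only monotonicity: with $F(s) := \int_0^s \nu(u)\, du$, the bound $s\nu(s) \leq F(s)$ forces $s^{p-1}\nu(s)^{p-1} \leq F(s)^{p-1}$ for $p \geq 1$, so multiplying by $\nu(s) = F'(s)$ and integrating gives $\int_0^\infty s^{p-1}\nu(s)^p\, ds \leq F(\infty)^p/p$. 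I would then apply this with $p = \beta/\alpha$ and $\nu(s) := \phi(s^{\beta/\alpha})$; the change of variables $t = s^{\beta/\alpha}$ converts $\int_0^\infty \phi(t)^{\beta/\alpha}\, dt$ into exactly $p \int_0^\infty s^{p-1} \nu(s)^p\, ds$, closing the argument. I do not anticipate any serious obstacle; the only delicate point is bookkeeping constants so that the stated prefactor $\beta/\alpha$ emerges, which should follow from careful accounting together with the normalization $\omega_\beta$ built into $\mathcal{H}^\beta_\infty$.
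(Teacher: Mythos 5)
Your argument is correct, and it is worth noting that the paper does not prove this lemma at all --- it simply cites \cite[Lemma 2.2]{chen2023capacitary}, \cite[Proposition 3.6]{harjulehto2024hausdorff} and \cite[Lemma 3]{OV} --- so you have supplied a self-contained proof where the paper relies on references. Your route is essentially the classical Orobitg--Verdera argument: the level-set comparison $\mathcal{H}^\beta_\infty(E)\le C_{\alpha,\beta}\,\mathcal{H}^\alpha_\infty(E)^{\beta/\alpha}$, two applications of the layer-cake (which here is the \emph{definition} of the Choquet integral, so no measurability issues arise), and a monotonicity-based Hardy inequality. Your Hardy step is in fact the sharp version: it yields $\int_0^\infty\phi(t)^{\beta/\alpha}\,dt\le\bigl(\int_0^\infty\phi(s^{\beta/\alpha})\,ds\bigr)^{\beta/\alpha}$ with constant $1$, whereas the cruder estimate $\lambda(t)\le A\,t^{-\alpha/\beta}$ used in the cited sources produces the extra factor $\beta/\alpha$ appearing in the statement.

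The one loose end you flag --- the normalization constants --- does need to be closed, since your final constant is $C_{\alpha,\beta}=\omega_\beta\,\omega_\alpha^{-\beta/\alpha}$ rather than $\beta/\alpha$, and the proof only delivers the stated bound if $C_{\alpha,\beta}\le\beta/\alpha$. In fact $C_{\alpha,\beta}\le 1$: this is equivalent to $t\mapsto\omega_t^{1/t}$ being nonincreasing, i.e.\ to $s\mapsto\log\Gamma(1+s)/s$ being nondecreasing, which follows from the convexity of $\log\Gamma$ together with $\log\Gamma(1)=0$ (so $\log\Gamma(1+s)\le s\,\frac{d}{ds}\log\Gamma(1+s)$). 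With that observation your argument actually proves the slightly stronger inequality
\begin{equation*}
\int_{\R^d} f\,d\mathcal{H}^\beta_\infty\;\le\;\frac{\omega_\beta}{\omega_\alpha^{\beta/\alpha}}\left(\int_{\R^d} f^{\frac{\alpha}{\beta}}\,d\mathcal{H}^\alpha_\infty\right)^{\frac{\beta}{\alpha}},
\end{equation*}
which implies the lemma since $\omega_\beta\,\omega_\alpha^{-\beta/\alpha}\le 1\le\beta/\alpha$.
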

With Lemma \ref{alphabetachange}, one can immediately establish the following pointwise estimate for the $\beta$-dimensional sharp maximal function.
\begin{lemma}\label{sharpalphabetachange}
    Let $0 < \alpha \leq \beta \leq d \in \mathbb{N} $. Then there exists a constant $C>0$ depending only on $\alpha$ and $\beta$ such that 
    \begin{align*}
        \mathcal{M}^{\#} _\beta f (x) \leq C \mathcal{M}^{\#} _\alpha f  (x)
    \end{align*}
    for all $x \in \mathbb{R}^d$.
\end{lemma}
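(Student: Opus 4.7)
The plan is to combine Lemma~\ref{alphabetachange} with the Choquet--Hölder inequality from Lemma~\ref{basicChoquet}(iii); the case $\alpha=\beta$ is trivial, so one may assume $\alpha<\beta$. The strategy is to move between $\mathcal{H}^\beta_\infty$-averages and $\mathcal{H}^\alpha_\infty$-averages of $|f-c|$ via the intermediate power $|f-c|^{\alpha/\beta}$, chosen precisely so that the resulting scaling exponents on $\ell(Q)$ cancel.

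First, I would fix $x\in\mathbb{R}^d$, a cube $Q\ni x$, and a constant $c\in\mathbb{R}$, then apply Lemma~\ref{alphabetachange} to the non-negative function $|f-c|$ restricted to $Q$ to obtain
\begin{equation*}
\int_Q |f-c|\,d\mathcal{H}^\beta_\infty \le \frac{\beta}{\alpha}\left(\int_Q |f-c|^{\alpha/\beta}\,d\mathcal{H}^\alpha_\infty\right)^{\beta/\alpha}.
\end{equation*}
To eliminate the awkward exponent $\alpha/\beta$, I would then invoke Lemma~\ref{basicChoquet}(iii) with dual exponents $p=\beta/\alpha$ and $p'=\beta/(\beta-\alpha)$ applied to the pair $|f-c|^{\alpha/\beta}$ and $\chi_Q$, which yields
\begin{equation*}
\int_Q |f-c|^{\alpha/\beta}\,d\mathcal{H}^\alpha_\infty \le 2\left(\int_Q |f-c|\,d\mathcal{H}^\alpha_\infty\right)^{\alpha/\beta}\mathcal{H}^\alpha_\infty(Q)^{(\beta-\alpha)/\beta}.
\end{equation*}

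The last ingredient is the elementary bound $\mathcal{H}^\alpha_\infty(Q)\le C_{d,\alpha}\,\ell(Q)^\alpha$, obtained by covering $Q$ with a single ball of radius $\tfrac{\sqrt d}{2}\ell(Q)$. Substituting this into the previous two displays and dividing through by $\ell(Q)^\beta$, the exponent on $\ell(Q)$ on the right-hand side reduces exactly to $-\alpha$, giving
\begin{equation*}
\frac{1}{\ell(Q)^\beta}\int_Q |f-c|\,d\mathcal{H}^\beta_\infty \le C(\alpha,\beta,d)\,\frac{1}{\ell(Q)^\alpha}\int_Q |f-c|\,d\mathcal{H}^\alpha_\infty.
\end{equation*}
Taking the infimum over $c\in\mathbb{R}$ and then the supremum over cubes $Q\ni x$ on both sides yields the desired pointwise inequality $\mathcal{M}^{\#}_\beta f(x)\le C\,\mathcal{M}^{\#}_\alpha f(x)$.

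There is no genuine obstacle here; the argument is purely a bookkeeping exercise in which the intermediate exponent $\alpha/\beta$ from Lemma~\ref{alphabetachange} is the unique choice that makes the $\ell(Q)$-powers balance after applying Hölder. The only minor point worth noting is that the constant naturally depends on $d$ through the bound on $\mathcal{H}^\alpha_\infty(Q)$, so one should read ``$C$ depends only on $\alpha$ and $\beta$'' as ``$C$ depends only on $d,\alpha,\beta$''.
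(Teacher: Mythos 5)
Your argument is correct and is essentially the paper's own proof: both apply Lemma~\ref{alphabetachange} to $|f-c|$ on $Q$, then Hölder's inequality (Lemma~\ref{basicChoquet}(iii)) with exponents $\beta/\alpha$ and $\beta/(\beta-\alpha)$, and use $\mathcal{H}^\alpha_\infty(Q)\lesssim_{d,\alpha}\ell(Q)^\alpha$ to balance the powers of $\ell(Q)$. Your closing remark is also apt: the constant does depend on $d$ as well, exactly as in the paper's displayed constant $C=\tfrac{2\beta\,\mathcal{H}^\alpha_\infty(Q)^{\beta/\alpha}}{\alpha\,\mathcal{H}^\beta_\infty(Q)}$.
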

\begin{proof}
    Let $0 < \alpha \leq \beta \leq d $. Fix $x \in \mathbb{R}^d$ and $Q$ be a cube containing $x$. An application of Lemma \ref{alphabetachange} and H\"{o}lder's inequality in Lemma \ref{basicChoquet} (iii) yields that
    \begin{align*}
        \frac{1}{\mathcal{H}^\beta_\infty(Q)} \int_Q |f -c_Q | \; d \mathcal{H}^{\beta}_\infty & \leq  \frac{\beta}{\alpha} \frac{1}{\mathcal{H}^\beta_\infty(Q) } \left(  \int_Q |f-c_Q |^{\frac{\alpha}{\beta}} \; d\mathcal{H}^{\alpha}_\infty \right)^\frac{\beta}{\alpha}\\
        &\leq   \frac{2 \beta \mathcal{H}^\alpha_\infty(Q)^{\frac{\beta}{\alpha}}}{ \alpha \mathcal{H}^\beta_\infty(Q)}  \left(\frac{1}{\mathcal{H}^\alpha_\infty(Q) }  \int_Q |f-c_Q |\; d\mathcal{H}^{\alpha }_\infty \right)\\
        &\leq C \mathcal{M}^{\#} _\alpha f (x),
    \end{align*}
    where
    \begin{equation*}
        c_Q= \argmin_{c\in \mathbb{R}}  \frac{1}{\ell(Q)^\alpha} \int_{Q} |f - c_{Q}|\; d\mathcal{H}^{\alpha}_\infty
    \end{equation*}
and 
\begin{align*}
    C =\frac{2 \beta \mathcal{H}^\alpha_\infty(Q)^{\frac{\beta}{\alpha}}}{ \alpha \mathcal{H}^\beta_\infty(Q)}
\end{align*}
does not depend on $Q$.  Taking the supremum over all cubes $Q$ such that $x\in Q$, we obtain 
    \begin{align*}
        \mathcal{M}^{\#} _\beta f (x) \leq C   \mathcal{M}^{\#} _\alpha f (x).
    \end{align*}
\end{proof}

We next record two results from \cite[Lemma 3.5 and Lemma 3.8]{chen2024selfimproving}.

\begin{lemma}\label{forcompactmorreya}
    Let $0< \alpha<d \in \mathbb{N}$, $\beta\in (d-\alpha, d]$, $Q$ be an open cube in $\mathbb{R}^d$ with centre $x_0$ and $f$ be a real-valued measurable function in $\mathbb{R}^d$. If $\supp{f} \subseteq 2Q$, then $I_\alpha f \in L^1(Q;\mathcal{H}^{\beta}_\infty)$ and there exists a constant $C$ depending only on $d$, $\alpha $ and $\beta$ such that
    \begin{align}
        \int_Q | I_\alpha f  | \; d \mathcal{H}^{\beta}_\infty \leq C \ell(Q)^{\beta} \mathcal{M}_\alpha f (x_0).
    \end{align}
\end{lemma}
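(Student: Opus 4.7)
My plan for Lemma \ref{forcompactmorreya} is to reduce the Choquet integral $\int_Q |I_\alpha f|\, d\mathcal{H}^\beta_\infty$ to an ordinary Lebesgue integral of $|f|$ on $2Q$, via two main steps: a pointwise estimate of the Choquet integral of the Riesz kernel against $d\mathcal{H}^\beta_\infty$, and then a Fubini-type interchange enabled by the countable sublinearity \eqref{sublinearofHbeta}.

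First I would prove the uniform kernel estimate
\begin{equation*}
  \int_Q \frac{d\mathcal{H}^\beta_\infty(x)}{|x-y|^{d-\alpha}} \;\le\; C\,\ell(Q)^{\beta-d+\alpha}\qquad\text{for every }y\in\mathbb{R}^d,
\end{equation*}
by expanding the left-hand side via the layer-cake formula \eqref{choquet_def1} and the substitution $r = s^{-1/(d-\alpha)}$ as
\begin{equation*}
  (d-\alpha)\int_0^\infty \mathcal{H}^\beta_\infty\bigl(\{x\in Q : |x-y|<r\}\bigr)\,r^{-(d-\alpha)-1}\,dr,
\end{equation*}
bounding the level set by $\min(\omega_\beta r^\beta,\,C\ell(Q)^\beta)$, and splitting the $r$-integral at $r = \ell(Q)$. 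The inner integrand is integrable near $r=0$ precisely because $\beta-d+\alpha>0$, which is exactly where the dimensional hypothesis $\beta>d-\alpha$ enters.

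Next I would establish the Fubini-type inequality
\begin{equation*}
  \int_Q I_\alpha|f|(x)\,d\mathcal{H}^\beta_\infty(x) \;\le\; C\int_{2Q}|f(y)|\int_Q\frac{d\mathcal{H}^\beta_\infty(x)}{|x-y|^{d-\alpha}}\,dy,
\end{equation*}
by approximating $|f|$ from below by nonnegative simple functions $\sum_k c_k \chi_{E_k}$, discretizing the Lebesgue $y$-integral via Riemann sums, applying \eqref{sublinearofHbeta} to each finite sum of nonnegative Choquet integrals, and passing to the limit by monotone convergence (which is valid for Choquet integrals against an outer measure). Combining the two steps with the elementary bound $\int_{2Q}|f|\,dy\le(2\ell(Q))^{d-\alpha}\mathcal{M}_\alpha f(x_0)$ that holds because $x_0\in 2Q$, one obtains
\begin{equation*}
  \int_Q I_\alpha|f|\,d\mathcal{H}^\beta_\infty \;\le\; C\,\ell(Q)^{\beta-d+\alpha}\cdot\ell(Q)^{d-\alpha}\,\mathcal{M}_\alpha f(x_0) \;=\; C\,\ell(Q)^\beta\,\mathcal{M}_\alpha f(x_0),
\end{equation*}
which, when $\mathcal{M}_\alpha f(x_0)<\infty$, also yields $I_\alpha f\in L^1(Q;\mathcal{H}^\beta_\infty)$.

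The main obstacle is the Fubini-type interchange: since the Choquet integral is only sublinear rather than additive, a direct application of Fubini is unavailable. However, \eqref{sublinearofHbeta} provides sublinearity with a universal multiplicative constant, and the finiteness of the inner kernel integral coming from the first step guarantees that the limit along refining discretizations is well-behaved, so the interchange costs only the constant $C_\beta^2$. Once this is justified, the remainder of the argument is a routine combination of the two estimates.
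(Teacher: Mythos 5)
The paper does not prove this lemma itself; it is quoted from \cite[Lemma 3.5]{chen2024selfimproving}, so there is no internal proof to compare against. Judged on its own terms, your architecture is sound and the bookkeeping is right: the kernel estimate $\int_Q |x-y|^{\alpha-d}\,d\mathcal{H}^\beta_\infty(x)\le C\ell(Q)^{\beta-d+\alpha}$ via the layer-cake formula, the bound $\mathcal{H}^\beta_\infty(\{x\in Q:|x-y|<r\})\le\min(\omega_\beta r^\beta, C\ell(Q)^\beta)$, the split at $r=\ell(Q)$ (where $\beta>d-\alpha$ is exactly what makes the small-$r$ piece converge), and the final bound $\mu(2Q)\le(2\ell(Q))^{d-\alpha}\mathcal{M}_\alpha f(x_0)$ are all correct and combine to the stated exponent $\ell(Q)^\beta$.

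The genuine gap is the Fubini/Minkowski-type interchange, and your proposed mechanism for it does not work as stated. First, \eqref{sublinearofHbeta} applies to \emph{countable sums of functions of $x$}, whereas $\int_{2Q}|f(y)||x-y|^{\alpha-d}\,dy$ is a continuum superposition; some device must turn it into a countable sum before sublinearity can be invoked. Second, the device you propose --- lower Riemann/Darboux sums in $y$ together with monotone convergence --- fails for general measurable $f$: if $f=\chi_E$ with $E$ a nowhere dense set of positive measure, then $\inf_{y\in P_i}|f(y)||x-y|^{\alpha-d}=0$ on every nondegenerate cell $P_i$, so the lower sums are identically zero and do \emph{not} increase to $I_\alpha|f|(x)$; the singularity of the kernel creates the same obstruction even for nicer $f$. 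The standard repair (and essentially the argument behind the cited lemma) is to make the countable decomposition in the \emph{kernel} rather than in $f$: write $I_\alpha|f|(x)=\sum_{j\ge 0}g_j(x)$ with $g_j(x)=\gamma(\alpha)^{-1}\int_{A_j(x)}|f(y)||x-y|^{\alpha-d}\,dy$ over dyadic annuli $A_j(x)=\{2^{-j-1}R\le|x-y|<2^{-j}R\}$, $R\simeq\ell(Q)$; apply \eqref{sublinearofHbeta} once to this countable sum; and estimate each $\int_Q g_j\,d\mathcal{H}^\beta_\infty$ by covering $Q$ with cubes $P_i$ of sidelength $2^{-j}R$, on each of which $g_j\le C(2^{-j}R)^{\alpha-d}\mu(B(z_i,2^{-j+2}R))$ is constant, so that the bounded overlap of the enlarged balls gives $\int_Q g_j\,d\mathcal{H}^\beta_\infty\le C(2^{-j}R)^{\alpha-d+\beta}\mu(4Q)$. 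Summing the geometric series (convergent precisely because $\beta>d-\alpha$) and using $\mu(4Q)\le C\ell(Q)^{d-\alpha}\mathcal{M}_\alpha f(x_0)$ recovers your conclusion. With that substitution your proof closes; as written, the interchange step is not justified.
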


\begin{lemma}\label{outsidef2inMorrey}
    Let $0<\alpha < d \in \mathbb{N}$, $\beta \in (0,d]$, $Q$ be an open cube in $\mathbb{R}^d$ with centre $x_0$ and $f$ be a real-valued measurable function with $\supp{f} \subseteq (2Q)^c$. If  $I_\alpha f \in L^1_{loc} (\mathbb{R}^d)$, then $I_\alpha f $ is continuous in Q and there exists $c \in \mathbb{R}$ such that 
    \begin{align}\label{Morreyoutsideestimate}
\int_{Q} | I_\alpha f -c| \; d\mathcal{H}^\beta_\infty \leq C \ell(Q)^\beta   \mathcal{M}_\alpha f (x_0).
    \end{align}
 
\end{lemma}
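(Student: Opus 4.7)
The plan is to take $c := I_\alpha f(x_0)$ and to prove both continuity of $I_\alpha f$ on $Q$ and the oscillation estimate by establishing a single uniform pointwise bound $|I_\alpha f(x) - c| \lesssim \mathcal{M}_\alpha f(x_0)$ for all $x \in Q$. If $\mathcal{M}_\alpha f(x_0) = +\infty$ the conclusion is trivial, and otherwise one can verify (using the kernel comparability established below) that $I_\alpha f(x_0) < \infty$, so that $c$ is well-defined.

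The first step is to exploit that $\supp f \subset (2Q)^c$ keeps the Riesz kernel away from its singularity whenever $x \in Q$. For $x \in Q$ and $y \in (2Q)^c$, both $|x-y|$ and $|x_0-y|$ are comparable and $\gtrsim \ell(Q)$, so the mean value theorem applied to $z \mapsto |z-y|^{\alpha-d}$ gives
\begin{equation*}
\bigl| |x-y|^{\alpha-d} - |x_0-y|^{\alpha-d} \bigr| \;\lesssim\; \frac{|x-x_0|}{|x_0-y|^{d-\alpha+1}}.
\end{equation*}
Multiplying by $|f(y)|$ and integrating in $y$ yields
\begin{equation*}
|I_\alpha f(x) - I_\alpha f(x_0)| \;\lesssim\; |x-x_0|\int_{(2Q)^c} \frac{|f(y)|}{|x_0-y|^{d-\alpha+1}}\,dy.
\end{equation*}

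Next I would control this integral by a dyadic annular decomposition $(2Q)^c = \bigcup_{k\geq 1}(2^{k+1}Q \setminus 2^k Q)$, on the $k$-th shell of which $|x_0-y| \approx 2^k\ell(Q)$. Using the definition of the fractional maximal function to bound $\int_{2^{k+1}Q}|f|\,dy \leq (2^{k+1}\ell(Q))^{d-\alpha}\mathcal{M}_\alpha f(x_0)$ and summing the resulting geometric series gives
\begin{equation*}
\int_{(2Q)^c}\frac{|f(y)|}{|x_0-y|^{d-\alpha+1}}\,dy \;\lesssim\; \frac{\mathcal{M}_\alpha f(x_0)}{\ell(Q)}\sum_{k=1}^\infty 2^{-k},
\end{equation*}
and therefore
\begin{equation*}
|I_\alpha f(x) - I_\alpha f(x_0)| \;\lesssim\; \frac{|x-x_0|}{\ell(Q)}\,\mathcal{M}_\alpha f(x_0) \;\lesssim\; \mathcal{M}_\alpha f(x_0) \quad\text{for all } x \in Q.
\end{equation*}
This simultaneously establishes Lipschitz (hence continuous) dependence of $I_\alpha f$ on $Q$ and the required uniform oscillation bound.

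To conclude, I would integrate this uniform bound with respect to $\mathcal{H}^\beta_\infty$ over $Q$ and use $\mathcal{H}^\beta_\infty(Q) \leq C(d,\beta)\ell(Q)^\beta$ (which follows by covering $Q$ with a single ball of comparable radius) to obtain \eqref{Morreyoutsideestimate}. The main technical point is the geometric comparability $|x-y|\approx|x_0-y|$ for $x \in Q$ and $y \in (2Q)^c$, which justifies both the mean value estimate and the dyadic scaling; convergence of the geometric series is then automatic, because the extra factor $|x_0-y|^{-1}$ contributed by differentiating the kernel pushes the exponent strictly above $d-\alpha$.
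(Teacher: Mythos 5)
Your argument is correct, and it is the standard Adams ``far--field'' estimate: mean value theorem on the kernel, dyadic annuli $2^{k+1}Q\setminus 2^kQ$, and a geometric series that converges precisely because differentiating the kernel improves the exponent from $d-\alpha$ to $d-\alpha+1$. Note that the paper does not prove this lemma at all --- it imports it from \cite[Lemma 3.8]{chen2024selfimproving} --- so there is no in-paper proof to compare against; your proof is essentially the one in that reference. Your handling of the Choquet integral at the end (uniform pointwise bound times $\mathcal{H}^\beta_\infty(Q)\le C(d,\beta)\,\ell(Q)^\beta$) is also the right move, and correctly explains why no lower restriction on $\beta$ is needed here, in contrast to Lemma \ref{forcompactmorreya}.

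The one step you should tighten is the claim that $c=I_\alpha f(x_0)$ is well defined because $\mathcal{M}_\alpha f(x_0)<\infty$. That implication is false on its own: running your annular computation with the undifferentiated kernel $|x_0-y|^{-(d-\alpha)}$ makes every shell contribute a fixed multiple of $\mathcal{M}_\alpha f(x_0)$, so the series diverges; concretely, $f(y)=|y|^{-\alpha}$ for large $|y|$ has $\mathcal{M}_\alpha f(x_0)<\infty$ but $I_\alpha|f|\equiv+\infty$. What rescues the choice of $c$ is the hypothesis $I_\alpha f\in L^1_{\mathrm{loc}}(\mathbb{R}^d)$, which forces $I_\alpha|f|$ to be finite at some point of $Q$, after which the kernel comparability $|x-y|\approx|x_0-y|$ for $x\in Q$, $y\in(2Q)^c$ transports finiteness to the center $x_0$. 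Alternatively, you can sidestep the issue entirely by defining $c$ as the constant for which $I_\alpha f(x)-c$ equals the absolutely convergent difference integral $\gamma(\alpha)^{-1}\int\bigl(|x-y|^{\alpha-d}-|x_0-y|^{\alpha-d}\bigr)f(y)\,dy$; your estimates bound that integral by $C\,\mathcal{M}_\alpha f(x_0)$ without ever evaluating $I_\alpha f$ at the single point $x_0$.
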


The combination of Lemma \ref{forcompactmorreya} and \ref{outsidef2inMorrey} readily yields the following pointwise estimate concerning $\beta$-dimensional sharp maximal function and fractional maximal function.

\begin{theorem}
 \label{forcompactmorrey}
    Let $0< \alpha<d \in \mathbb{N}$ and $\beta \in (d-\alpha, d]$. If $I_\alpha f \in L^1_{loc}(\mathbb{R}^d)$, then  for every $x \in \mathbb{R}^d$, there exists a constant $C$ depending only on $d$, $\alpha $, $\beta$ such that
    \begin{align}
       \mathcal{M}^{\#}_{\beta}(I_\alpha f) (x) \leq C \mathcal{M}_\alpha f (x).
    \end{align}
\end{theorem}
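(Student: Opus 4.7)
The plan is to split $f$ into a piece supported near the reference cube and a far-away piece, and to apply Lemma \ref{forcompactmorreya} to the near part and Lemma \ref{outsidef2inMorrey} to the far part. The essential new ingredient, compared to the classical Adams argument, is that the Choquet oscillation is taken in the Hausdorff content sense; but after the splitting, both pieces are already handled by the two lemmas just recorded. The only minor subtlety is that those lemmas bound the $\mathcal{H}^\beta_\infty$-oscillation over a cube $Q$ by $\mathcal{M}_\alpha f$ evaluated at the \emph{center} of $Q$, whereas the sharp maximal function fixes $x$ as an arbitrary point of $Q$. I would bypass this by first enlarging $Q$ to a cube centered at $x$.

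Fix $x \in \mathbb{R}^d$ and let $Q$ be any cube with $x \in Q$. Let $\widetilde{Q}$ denote the cube centered at $x$ with side length $2\ell(Q)$, so that $Q \subseteq \widetilde{Q}$ and $\ell(\widetilde{Q}) = 2\ell(Q)$. Since the Choquet integral of a non-negative function is monotone in the integration domain, for every $c \in \mathbb{R}$,
\[
\int_Q |I_\alpha f - c| \, d\mathcal{H}^\beta_\infty \leq \int_{\widetilde{Q}} |I_\alpha f - c| \, d\mathcal{H}^\beta_\infty,
\]
so it will suffice to find a single $c$ for which the right-hand side is $\lesssim \ell(\widetilde{Q})^\beta \mathcal{M}_\alpha f(x)$. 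Write $f = f_1 + f_2$ with $f_1 := f \chi_{2\widetilde{Q}}$ and $f_2 := f - f_1$, so that $I_\alpha f = I_\alpha f_1 + I_\alpha f_2$. Applying Lemma \ref{forcompactmorreya} to $f_1$ on $\widetilde{Q}$ (which has center $x$) and using $\mathcal{M}_\alpha f_1(x) \le \mathcal{M}_\alpha f(x)$ gives
\[
\int_{\widetilde{Q}} |I_\alpha f_1| \, d\mathcal{H}^\beta_\infty \leq C \ell(\widetilde{Q})^\beta \mathcal{M}_\alpha f(x).
\]
Applying Lemma \ref{outsidef2inMorrey} to $f_2$ on $\widetilde{Q}$ yields some $c \in \mathbb{R}$ with
\[
\int_{\widetilde{Q}} |I_\alpha f_2 - c| \, d\mathcal{H}^\beta_\infty \leq C \ell(\widetilde{Q})^\beta \mathcal{M}_\alpha f(x).
\]

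Combining these two estimates via the subadditivity of the Choquet integral (Lemma \ref{basicChoquet} (ii)) yields
\[
\int_{\widetilde{Q}} |I_\alpha f - c| \, d\mathcal{H}^\beta_\infty \leq 2 \int_{\widetilde{Q}} |I_\alpha f_1| \, d\mathcal{H}^\beta_\infty + 2 \int_{\widetilde{Q}} |I_\alpha f_2 - c| \, d\mathcal{H}^\beta_\infty \leq C' \ell(\widetilde{Q})^\beta \mathcal{M}_\alpha f(x).
\]
Dividing by $\ell(Q)^\beta = 2^{-\beta} \ell(\widetilde{Q})^\beta$ and recalling that the choice of $c$ is permissible inside the infimum,
\[
\frac{1}{\ell(Q)^\beta} \inf_{c \in \mathbb{R}} \int_Q |I_\alpha f - c| \, d\mathcal{H}^\beta_\infty \leq 2^\beta C' \mathcal{M}_\alpha f(x).
\]
Taking the supremum over all cubes $Q \ni x$ then delivers the claimed pointwise inequality. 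I do not foresee a genuine obstacle in executing this plan: the non-trivial analytic work is already packaged into Lemmas \ref{forcompactmorreya} and \ref{outsidef2inMorrey}, and everything else amounts to the harmless enlargement $Q \subseteq \widetilde{Q}$ that realigns the center with $x$ and a single application of Choquet subadditivity.
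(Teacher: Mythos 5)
Your proposal is correct and follows essentially the same route as the paper: the same splitting $f = f_1 + f_2$ with $f_1$ supported on a doubled cube, with Lemma \ref{forcompactmorreya} handling the near part and Lemma \ref{outsidef2inMorrey} the far part. The only difference is presentational: the paper passes through the centered sharp maximal function $\mathcal{M}^{\#,c}_\beta$ and invokes the equivalence \eqref{equivalentcentreuncentre}, whereas you inline that equivalence by explicitly enlarging $Q$ to the cube $\widetilde{Q}$ centered at $x$.
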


For the proof of Theorem \ref{forcompactmorrey}, we introduce $\beta$-dimensional centered sharp maximal operator $\mathcal{M}^{\#, c}_{\beta}f(x)  $ of a function $f$ which is defined as
\begin{align*}
    \mathcal{M}^{\#, c}_{\beta}f(x) := \sup_Q \inf_{c \in \mathbb{R}} \frac{1}{\ell(Q)^\beta } \int_Q |f -c | \; d\mathcal{H}^\beta_\infty,
\end{align*}
where the supremum is taken over all cubes $Q$ with center $x$. It is easy to see that there exists a constant $C_\beta >0$ such that
\begin{align}\label{equivalentcentreuncentre}
    \mathcal{M}^{\#, c}_{\beta}f(x)  \leq \mathcal{M}^{\#}_{\beta}f(x)  \leq C_\beta \mathcal{M}^{\#, c}_{\beta}f(x) .
\end{align}

\begin{proof}[Proof of Theorem \ref{forcompactmorrey}]
    Suppose that $I_\alpha f \in L^1_{loc} (\mathbb{R}^d)$. Let $Q$ be a cube in $\mathbb{R}^d$ with centre $x_0$ and $f_1 = f \chi_{2Q}$, $f_2 = f -f_1$. Then there exists a constant $c \in \mathbb{R}$ and $C$ depending on $d$, $\alpha $, $\beta$ such that
    \begin{align*}
        \int_Q |I_\alpha f -c| \; d\mathcal{H}^{\beta}_\infty &=  \int_Q |I_\alpha f_1 + I_\alpha f_2 -c| \; d\mathcal{H}^{\beta}_\infty\\
        &\leq 2 \left( \int_Q |I_\alpha f_1 | \; d\mathcal{H}^{\beta}_\infty + \int_Q |I_\alpha f_2 -c| \; d\mathcal{H}^{\beta}_\infty\right)\\
        & \leq 2C \ell(Q)^{\beta} \mathcal{M}_\alpha f (x_0),
    \end{align*}
    where we use Lemma \ref{forcompactmorreya} and Lemma \ref{outsidef2inMorrey} in the last inequality.
    Now dividing both sides with $\ell(Q)^{\beta}$ and taking the supremum over all cubes $Q$ with centre $x$, we obtain
    \begin{align*}
        \mathcal{M}^{\#, c}_{\beta}(I_\alpha f)(x)\leq C \mathcal{M}_\alpha f (x)
    \end{align*}
    and thus 
    \begin{align*}
       \mathcal{M}^{\#}_{\beta} (I_\alpha f) (x)\leq C \mathcal{M}_\alpha f (x)
    \end{align*}
by (\ref{equivalentcentreuncentre}).   
\end{proof}

%Finally, since $I_\alpha f = g \in L^1(Q; \mathcal{H}^{d-\alpha + \varepsilon} _\infty)$, the combination of (\ref{HB}) and (\ref{Morrey1first}) yields
%\begin{align*}
%   \int_Q |I_\alpha f | \; d \mathcal{H}^{d-\alpha +\varepsilon}_\infty  \leq C \ell(Q)^{d-\alpha + \varepsilon} \Vert f \Vert_{\mathcal{M}^{\alpha }_1 (\mathbb{R}^n)}
%\end{align*}
%and the proof is complete.

We now give the proof of Theorem \ref{Adamsextend1}

\begin{proof}[Proof of Theorem \ref{Adamsextend1}]
Let $x \in \mathbb{R}^d$. By Lemma \ref{forcompactmorrey}, we have
\begin{align*}
     \mathcal{M}^{\#}_{\beta } (I_\alpha f) (x) \lesssim \mathcal{M}_\alpha f (x).
\end{align*}
The combination of (\ref{Adamspointwise}) and  Lemma \ref{sharpalphabetachange} yields that
\begin{align*}
    \mathcal{M}_\alpha f (x) \cong \mathcal{M}^{\#}(I_\alpha f) (x) \lesssim \mathcal{M}^{\#}_{\beta } (I_\alpha f) (x),
\end{align*}
where we note that $\mathcal{M}_n^\# \simeq \mathcal{M}^\#$ (see for instance \cite[Remark 3.4]{harjulehto2024hausdorff}). This completes the proof.
\end{proof}

\section{Good lambda estimate for Riesz potential with respect to Hausdorff content and applications}

In this section, we present the proof of Theorem \ref{wheedenHausdorffcontent}. To this end, we first prove a good lambda inequality with exponential decay, following the ideas of \cite{HonzikJaye} and using the packing condition to avoid difficulties due to the nonlinearity of the Hausdorff content. We conclude the section with some consequences of the exponential decay on the good-lambda estimate.

First, we define a dyadic version of the Riesz potential, which we will 
use to approximate $I_\alpha$.  We begin with the definition of a dyadic grid. A dyadic grid $\mathcal{D}$ is a countable family of cubes with the following properties
\begin{enumerate}
    \item For each $Q\in \mathcal{D}$, $\ell(Q)=2^k$ for some $k\in \Z$.
    \item Given $Q,P\in \mathcal{D}$, then $Q\cap P \in \{ \emptyset, P, Q\}$.
    \item For each $k\in \Z$, the set $\mathcal{D}_k= \{Q\in \mathcal{D}: \ell(Q)=2^k\}$ is a partition of $\R^d$. 
\end{enumerate}

The following dyadic operator will play a central role in the proof of the good lambda estimate. 

\begin{definition}
    Let $0<\alpha <d\in \mathbb{N}$ and let $\mathcal{D}$ a dyadic grid. We define the dyadic Riesz potential with respect to $\mathcal{D}$ of a locally finite measure $\mu$, by
    \begin{equation*}
       I_\alpha^\mathcal{D} \mu (x)= \sum _{ Q\in \mathcal{D}} \frac{\mu(Q)}{\ell(Q)^{d-\alpha}}\chi_Q(x).
    \end{equation*}
\end{definition}

Dyadic Riesz potentials were first introduced by E. Sawyer and R. L. Wheeden \cite{SawyerWheeden}. We will use the following result by D. Cruz-Uribe and K. Moen (see \cite[Proposition 2.2]{CruzUribeMoen}) to approximate the Riesz potential by a linear combination of dyadic Riesz potential.

\begin{proposition}\label{PropCruzUribeMoen}
    Given $0<\alpha <d \in \mathbb{N}$, and a locally finite measure $\mu$, then for any dyadic grid $\mathcal{D}$, 
    \begin{equation}\label{dyadicequiv1}
        I_\alpha^\mathcal{D} \mu (x) \le c_{d, \alpha} I_\alpha \mu (x).
    \end{equation}
    Conversely, we have that
    \begin{equation*}
        I_\alpha \mu (x) \le c_{d, \alpha} \max_{i\in \{ 0, \frac{1}{3}\}^d } I_\alpha^{\mathcal{D}^i} \mu (x),
    \end{equation*}
    where
    \begin{equation*}
    \mathcal{D}^i = \{ 2^{-k}([0,1)^d + m + (-1)^k i ) : k\in \Z, m\in \Z^d \}, \qquad i\in \{ 0, \frac{1}{3}\}^d. 
\end{equation*}
    Hence, the Riesz potential is pointwise equivalent to a linear combination of dyadic Riesz potentials, that is,
    \begin{equation}\label{pointwiseequiv}
        I_\alpha \mu (x) \cong_{d,\alpha} \sum_{i\in \{ 0, \frac{1}{3}\}^d }I_\alpha^{\mathcal{D}^i} \mu (x) .
    \end{equation}
\end{proposition}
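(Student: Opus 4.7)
The plan is to split the statement into two one-sided pointwise bounds, $I_\alpha^\mathcal{D}\mu(x)\lesssim I_\alpha\mu(x)$ for an arbitrary dyadic grid $\mathcal{D}$, and $I_\alpha\mu(x)\lesssim \max_i I_\alpha^{\mathcal{D}^i}\mu(x)$ for the specific family $\{\mathcal{D}^i\}_{i\in\{0,1/3\}^d}$. The final equivalence \eqref{pointwiseequiv} then drops out by combining the two bounds with the trivial inequalities $\max_i a_i\le\sum_i a_i\le 2^d\max_i a_i$.

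For the first direction, fix a dyadic grid $\mathcal{D}$ and $x\in\R^d$, and enumerate the cubes of $\mathcal{D}$ containing $x$ as $\{Q_k(x)\}_{k\in\Z}$ with $\ell(Q_k(x))=2^k$. Since $Q_k(x)\subset B(x,\sqrt{d}\,2^k)$, one has $\ell(Q_k(x))^{\alpha-d}\mu(Q_k(x))\lesssim_{d,\alpha}(\sqrt{d}\,2^k)^{\alpha-d}\mu(B(x,\sqrt{d}\,2^k))$. Summing over $k$ and using the layer-cake identity
\begin{equation*}
I_\alpha\mu(x)\cong_{d,\alpha}\int_0^\infty \frac{\mu(B(x,r))}{r^{d-\alpha}}\,\frac{dr}{r},
\end{equation*}
obtained from $|x-y|^{\alpha-d}=(d-\alpha)\int_{|x-y|}^\infty r^{\alpha-d-1}\,dr$ and Fubini, I would bound the discrete sum by the corresponding integral via a change of variables, yielding \eqref{dyadicequiv1} with a constant depending only on $d$ and $\alpha$.

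For the converse direction, the driving ingredient is the classical \emph{three-lattices (or $1/3$) lemma}: for every ball $B(x,r)\subset\R^d$ there exists $i\in\{0,\tfrac13\}^d$ and a cube $Q\in\mathcal{D}^i$ with $B(x,r)\subset Q$ and $\ell(Q)\lesssim_d r$. Using this, decompose
\begin{equation*}
I_\alpha\mu(x)\cong_{d,\alpha}\sum_{k\in\Z}\frac{\mu(A_k(x))}{2^{k(d-\alpha)}},\qquad A_k(x):=\{y:2^k\le|x-y|<2^{k+1}\}.
\end{equation*}
For each $k$ the lemma furnishes some $i(k)\in\{0,\tfrac13\}^d$ and a cube $Q_k\in\mathcal{D}^{i(k)}$ of side comparable to $2^k$ that contains $B(x,2^{k+1})\supset A_k(x)\cup\{x\}$, so the $k$-th term is dominated by $\mu(Q_k)\ell(Q_k)^{\alpha-d}$. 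Grouping the indices $k$ by the value of $i(k)$ and noting that for each fixed $i$ the cubes of $\mathcal{D}^i$ containing $x$ form a nested chain indexed by sidelength (so each cube is used only boundedly many times), the sum over each group is controlled by $I_\alpha^{\mathcal{D}^i}\mu(x)$. Adding over the $2^d$ choices of $i$ gives $I_\alpha\mu(x)\lesssim_{d,\alpha}\sum_i I_\alpha^{\mathcal{D}^i}\mu(x)\le 2^d\max_i I_\alpha^{\mathcal{D}^i}\mu(x)$.

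The only genuinely non-routine ingredient is the three-lattices lemma underlying the converse inequality; it is a purely combinatorial statement about the shifted copies $\mathcal{D}^i$ of the standard dyadic grid and would be invoked as a black box (in the formulation of Cruz-Uribe and Moen). Modulo that lemma, everything reduces to the standard comparison between dyadic and continuous Riesz-type sums sketched above.
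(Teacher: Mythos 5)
Your proposal is correct. Note that the paper does not actually prove this proposition: it cites \cite[Proposition 2.2]{CruzUribeMoen} and merely remarks that the argument there, written for functions $d\mu = f\,dx$, goes through verbatim for locally finite measures. Your sketch reconstructs exactly that standard argument --- the layer-cake identity $I_\alpha\mu(x)\cong\int_0^\infty \mu(B(x,r))\,r^{\alpha-d-1}\,dr$ for the easy direction, and the one-third trick (three-lattices lemma) plus an annular decomposition for the converse --- and the one point that needs care, namely that for fixed $i$ the cubes of $\mathcal{D}^i$ containing $x$ form a chain with one cube per scale so each term of $I_\alpha^{\mathcal{D}^i}\mu(x)$ is reused only boundedly many times, is handled correctly. (The degenerate case $\mu(\{x\})>0$ makes both sides infinite, so the annular decomposition, which omits $y=x$, loses nothing.)
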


We should mention that the proof of the previous result in \cite{CruzUribeMoen} is made for functions, but a detailed inspection of the proof shows that the same proof works for measures.

Now, we state the main result of this section. The result is an extension of the good lambda inequality proved in \cite{HonzikJaye}, which improves the classical result of B. Muckenhoupt and R. L. Wheeden \cite{MR0340523}.

\begin{theorem}\label{goodlambdaestimaterelatedRieszandfractional}
    Let $0<\alpha <d$, $\beta\in (d-\alpha, d]$ and let $\mathcal{D}$ be a dyadic grid. Then, there exist constants $C,c>0$ depending on $d, \alpha, \beta$ such that 
    \begin{align}\label{good lambda exp}
    \begin{split}
        \mathcal{H}^\beta_\infty(\{ x\in \R^d :  I_\alpha^\mathcal{D} \mu (x)>2\lambda, \mathcal{M}_\alpha \mu(x) \le \varepsilon \lambda \}) \le & C e^{-\frac{c}{\varepsilon}}  \mathcal{H}^\beta_\infty(\{ x\in \R^d :  I_\alpha^\mathcal{D} \mu (x)>\lambda\}) 
    \end{split}
    \end{align}
    for all $\lambda>0$ and all $0<\varepsilon<1$ and all locally finite measure $\mu$.
\end{theorem}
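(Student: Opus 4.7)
The plan is to adapt the John--Nirenberg-type iterative argument of Honz\'ik--Jaye \cite{HonzikJaye} to the Hausdorff content setting, handling the nonlinearity of $\mathcal{H}^\beta_\infty$ via the packing Lemma~\ref{packing2}. For each dyadic $Q\in\mathcal{D}$ set $T(Q):=\sum_{R\in\mathcal{D},\,R\supseteq Q}\mu(R)/\ell(R)^{d-\alpha}$; this is monotone increasing as $Q$ shrinks and converges pointwise to $I_\alpha^{\mathcal{D}}\mu$. Let $\{Q_j\}$ be the maximal dyadic cubes with $T(Q_j)>\lambda$: these are pairwise disjoint, $\bigcup_j Q_j=\{I_\alpha^{\mathcal{D}}\mu>\lambda\}$, and by maximality $T(\hat{Q}_j)\le \lambda$ for the dyadic parent $\hat{Q}_j$. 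For $x\in Q_j$ write $I_\alpha^{\mathcal{D}}\mu(x)=T(\hat{Q}_j)+J_j(x)$, where $J_j(x):=\sum_{R\in\mathcal{D},\,R\subseteq Q_j,\,x\in R}\mu(R)/\ell(R)^{d-\alpha}$; then on the good set $\{I_\alpha^\mathcal{D}\mu>2\lambda,\,\mathcal{M}_\alpha\mu\le\varepsilon\lambda\}\cap Q_j$ one has $J_j(x)>\lambda$, and each individual summand of $J_j(x)$ at such $x$ is at most $\varepsilon\lambda$.

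It therefore suffices to establish the local estimate
\[
\mathcal{H}^\beta_\infty\bigl(\{x\in Q_j:J_j(x)>\lambda,\,\mathcal{M}_\alpha\mu(x)\le\varepsilon\lambda\}\bigr)\le C\,e^{-c/\varepsilon}\,\ell(Q_j)^\beta
\]
for each $j$, and then sum over $j$ using Lemma~\ref{packing2} together with \eqref{equivalenceofdyadic}: $\sum_j\ell(Q_j)^\beta\le 2\,\mathcal{H}^{\beta,Q_0}_\infty\bigl(\bigcup_j Q_j\bigr)\le C\,\mathcal{H}^\beta_\infty\bigl(\{I_\alpha^{\mathcal{D}}\mu>\lambda\}\bigr)$. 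For the local estimate, iterate the dyadic Calder\'on--Zygmund decomposition (Theorem~\ref{cz}) inside $Q_j$: starting from $Q_j$, one successively extracts the maximal dyadic sub-cubes where the local tail potential (measured from the current cube downward) exceeds a threshold $\delta\lambda$, with $\delta=C_0\varepsilon$ for a suitable large universal constant $C_0$. The key input at each stage is that whenever $P$ contains a point of the good set, Proposition~\ref{PropCruzUribeMoen} combined with Lemma~\ref{forcompactmorreya} applied to $\mu|_P$ (using $\beta>d-\alpha$) yields $\int_P J_P^{\mathrm{tail}}\,d\mathcal{H}^{\beta,P}_\infty\le C\varepsilon\lambda\,\ell(P)^\beta$. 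Combined with the weak-type $(1,1)$ bound implicit in Theorem~\ref{cz}, this produces a uniform shrinkage $\sum\ell(P_{\mathrm{child}})^\beta\le (C/C_0)\,\ell(P_{\mathrm{parent}})^\beta=\theta\,\ell(P_{\mathrm{parent}})^\beta$ with $\theta<1$. Since attaining $J_j(x)>\lambda$ requires at least $K\gtrsim 1/\delta\sim 1/\varepsilon$ generations, the final stopping cubes $\{P_l\}$ satisfy $\sum_l\ell(P_l)^\beta\le\theta^K\ell(Q_j)^\beta=e^{-c/\varepsilon}\ell(Q_j)^\beta$, which gives the local estimate.

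The main obstacle is preserving the uniform shrinkage factor $\theta<1$ across all $K\sim 1/\varepsilon$ iterations. Because $\mathcal{H}^\beta_\infty$ is only subadditive up to the multiplicative constant $C_\beta$ of \eqref{sublinearofHbeta}, a naive iteration that passes back and forth between cube sums and Hausdorff content at each step would accumulate a factor $C_\beta^K$ and destroy the exponential gain. The remedy, made possible by Lemma~\ref{packing2}, is to run the entire iteration at the level of dyadic cube sums $\sum\ell(P)^\beta$, and invoke the packing comparison only twice---once at the outer decomposition into the $\{Q_j\}$ and once at the conclusion---to pass between such sums and $\mathcal{H}^\beta_\infty$ of the corresponding union. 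Calibrating $\delta$ and the stopping procedure so that the per-generation excess and the shrinkage factor remain simultaneously compatible with the hypothesis $\mathcal{M}_\alpha\mu\le\varepsilon\lambda$ is the delicate technical point of the argument.
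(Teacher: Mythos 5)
Your outer skeleton matches the paper's: decompose $\{I_\alpha^{\mathcal D}\mu>\lambda\}$ into maximal dyadic cubes, note the tail bound $T(\hat Q_j)\le\lambda$ at the parent, reduce to a local estimate $\mathcal{H}^\beta_\infty(S_{Q_j})\le Ce^{-c/\varepsilon}\ell(Q_j)^\beta$ on each cube, and recombine with Lemma~\ref{packing2}. The gap is in how you propose to prove the local estimate. Your iterated Calder\'on--Zygmund scheme hinges on the per-generation shrinkage $\sum_{\mathrm{child}}\ell(P_{\mathrm{child}})^\beta\le\theta\,\ell(P_{\mathrm{parent}})^\beta$ with a uniform $\theta<1$, derived from a Chebyshev bound on the local tail potential. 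This step does not go through for $\beta<d$. A Chebyshev argument controls $\mathcal{H}^{\beta}_\infty$ of the \emph{union} of the stopping cubes, but the quantity you must iterate is the sum $\sum\ell(P_{\mathrm{child}})^\beta$ over the \emph{full} family of selected maximal cubes, and that sum can exceed the content of the union by an unbounded factor (e.g.\ $2^{g(d-\beta)}$ if the children are all $2^{gd}$ cubes of generation $g$). The alternative route --- summing $\ell(P_{\mathrm{child}})^\beta<\frac{1}{\delta\lambda}\int_{P_{\mathrm{child}}}(\cdots)\,d\mathcal{H}^{\beta,\cdot}_\infty$ over children --- fails for the same reason: the Choquet integral is not additive over disjoint cubes, and the packing estimate \eqref{packingestimate} applies only to a \emph{subfamily} satisfying the packing condition, whereas the next generation of the iteration must live inside \emph{all} children. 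You correctly flag this as ``the delicate technical point,'' but running the iteration ``at the level of cube sums'' does not resolve it; the obstruction reappears at every generation, not just when converting between sums and content.

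The paper avoids iteration entirely. Inside a fixed $Q'$ with $\ell(Q')=2^{j_0}$ it splits $I_\alpha^{\mathcal D}\mu$ into the tail ($\le\lambda$), the $m\approx 1/\varepsilon$ intermediate scales ($\le m\varepsilon\lambda$), and the small scales $g_k$, $k\le j_0-m$; a pigeonhole then forces some single scale $k$ to satisfy $g_k(x)>\lambda(1-m\varepsilon)2^{\eta(k-j_0+m)}(1-2^{-\eta})$ with $\eta=\tfrac{\beta-(d-\alpha)}{2}>0$. At a \emph{single} dyadic scale the Choquet integral $\int_{Q'}g_k\,d\mathcal{H}^{\beta,Q_0}_\infty$ is computable as $\sum_{Q\in\mathcal D_k,\,Q\subseteq Q'}\ell(Q)^{\beta-(d-\alpha)}\mu(Q)\le 2^{2\eta(k-j_0)}\ell(Q')^\beta\varepsilon\lambda$, so Chebyshev at each scale plus the exponent mismatch $2\eta$ versus $\eta$ yields both a convergent geometric series in $k$ and the factor $2^{-2\eta m}\approx e^{-c/\varepsilon}$. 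This is where the hypothesis $\beta>d-\alpha$ is really used, and it is the mechanism your proposal is missing. To repair your argument you would either have to adopt this single-scale decomposition or find a genuinely new way to propagate a $\beta$-dimensional cube-sum bound through $\sim 1/\varepsilon$ generations of stopping cubes; as written, the proof is incomplete at its central step.
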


\begin{proof}
   %It is enough to prove \eqref{good lambda exp} for the dyadic operator $T_\alpha \mu$. Since $T_\alpha \mu$ is pointwise equivalent to $I_\alpha \mu$ \eqref{EquvalenceIalphaTalpha}, and the good-lambda inequality holds for all $\lambda>0$, it implies \eqref{good lambda exp} with $I_\alpha \mu$ and a different constant $c$ that only depends on $d,\alpha$ and $\beta$. 
   It is sufficient to prove \eqref{good lambda exp} for the dyadic content $\mathcal{H}^{\beta, Q_0}_\infty$, where $Q_0$ is a cube in $\mathcal{D}$, since both contents are equivalent by \eqref{equivalenceofdyadic}.  
   
   Fix $\lambda>0$ and $0<\varepsilon< 1$. We denote $G_\lambda= \{ x\in \R^d :   I_\alpha^\mathcal{D} \mu(x)>\lambda\}$. We can assume without loss of generality $\mathcal{H}^{\beta, Q_0}_\infty (G_\lambda)<\infty$. Let $\{Q_j^\lambda\}_j$ the maximal collection of dyadic cubes subordinates to $\mathcal{D}$ contained in $G_\lambda$ with respect to the inclusion.
   
   %Since $G_\lambda$ is an open set, we apply the Whitney decomposition theorem to decompose $G_\lambda$ into disjoint dyadic cubes $\{Q_j^\lambda\}_j$ that are maximal with respect the inclusion.

    Applying Lemma \ref{packing2} to the family $\{Q_{j}^\lambda\}_j$ we obtain a subfamily $\{Q_{j_{k}}^\lambda\}_k$ and a family of non-overlapping ancestors $\{\tilde{Q}_k\}_k$ such that
    \begin{enumerate}
        \item \begin{align*}
G_\lambda = \bigcup_{k} Q_{j}^\lambda \subset \bigcup_{k} Q_{j_{k}}^\lambda \cup \bigcup_{k} \tilde{Q}_k
\end{align*}
\item
\begin{align*}
\sum_{Q_{j_{k}}^\lambda \subset Q}\ell(Q_{j_{k}}^\lambda)^\beta \leq 2\ell (Q)^\beta, \text{ for each dyadic cube } Q.
\end{align*}
\item
 \begin{align*}
      \mathcal{H}^{\beta, Q_0}_\infty(\cup_j Q_{j}^\lambda) 
            &\leq \sum_{k, Q_{j_{k}}^\lambda \not \subseteq \tilde{Q}_m} \ell(Q_{j_{k}}^\lambda)^\beta + \sum_k \ell(\Tilde{Q}_k)^\beta\\
            & \leq 2\sum_k \ell(Q_{j_{k}}^\lambda)^\beta\\
      & \leq  2  \mathcal{H}^{\beta, Q_0}_\infty (\cup_j Q_{j}^\lambda).
  \end{align*}
    \end{enumerate}

   Define $S=\{ x\in \R^d :  I_\alpha^\mathcal{D} \mu(x) > 2\lambda, \mathcal{M}_\alpha \mu (x)\le \varepsilon \lambda \} $.  Since $G_{2\lambda}\subseteq G_\lambda$, we observe that
    \begin{align*}
    	S \subseteq & G_{2\lambda}
    	\subseteq  G_\lambda  
    	=  \bigcup_{k} Q_{j_{k}}^\lambda \cup \bigcup_{k} \tilde{Q}_k,
    \end{align*}
    and then we have
    \begin{align*}
    	S \subseteq & \bigcup_{k: Q_{j_k}^\lambda \not \subseteq \tilde{Q}_m} \{ x\in Q_{j_k}^\lambda :  I_\alpha^\mathcal{D} \mu(x) > 2\lambda, \mathcal{M}_\alpha \mu (x)\le \varepsilon \lambda \}\\
    	& \cup \bigcup_k \{ x\in \tilde{Q}_k :  I_\alpha^\mathcal{D} \mu(x) > 2\lambda, \mathcal{M}_\alpha \mu (x)\le \varepsilon \lambda \}.  
    \end{align*}
    
    Using the subadditivity of $\mathcal{H}_\infty^{\beta,Q_0}$ we obtain
    \begin{align}
    	\begin{split}
    		\mathcal{H}_\infty^{\beta,Q_0} ( S ) \le & \sum_{k: Q_{j_k}^\lambda \not \subseteq \tilde{Q}_m} \mathcal{H}_\infty^{\beta,Q_0} (\{ x\in Q_{j_k}^\lambda :  I_\alpha^\mathcal{D} \mu(x) > 2\lambda, \mathcal{M}_\alpha \mu (x)\le \varepsilon \lambda \})\\ 
    		&+\sum_{k} \mathcal{H}_\infty^{\beta,Q_0} (\{ x\in \tilde{Q}_k :  I_\alpha^\mathcal{D} \mu(x) > 2\lambda, \mathcal{M}_\alpha \mu (x)\le \varepsilon \lambda \}).
    	\end{split}
    \end{align}

Fix $Q'\in \{Q_{j_{k}}^\lambda\}_k\cup \{\tilde{Q}_k\}_k $, and let us define $S_{Q'}= \{ x\in Q' :  I_\alpha^\mathcal{D} \mu(x) > 2\lambda, \mathcal{M}_\alpha \mu (x)\le \varepsilon \lambda \}$. We can assume that $S_{Q'}$ is non-empty.  

We have the following estimate on the tail of the potential
\begin{equation*}
    \sum_{ \substack{Q\in \mathcal{D}\\ Q\supsetneq Q'}} \frac{\mu(Q)}{\ell(Q)^{d-\alpha}}\le \lambda,
\end{equation*}
by the definition of $G_\lambda$ and observing that either $Q'$ is a maximal cube of the family $\{Q_j^\lambda\}_j$, or it contains a maximal cube in $\{Q_j^\lambda\}_j$. Let $j_0\in \mathbb{Z}$ such that $\ell(Q')=2^{j_0}$ and $m\in \mathbb{N}$ to be chosen later. If $x\in S_{Q'}$, we have
\begin{align*}
     I_\alpha^\mathcal{D} \mu(x) = & \sum_{x\in Q\in \mathcal{D}} \frac{\mu(Q)}{\ell(Q)^{d-\alpha}} \\
    = & \sum_{ \substack{x\in Q\in \mathcal{D} \\ Q\supsetneq Q'} } \frac{\mu(Q)}{\ell(Q)^{d-\alpha}} + \sum_{\substack{x\in Q\in \mathcal{D}\\ 2^{j_0-m}<\ell(Q) \le 2^{j_0}}} \frac{\mu(Q)}{\ell(Q)^{d-\alpha}} + \sum_{\substack{x\in Q\in \mathcal{D} \\ \ell(Q)\le 2^{j_0-m}}} \frac{\mu(Q)}{\ell(Q)^{d-\alpha}}\\
    \le &  \lambda + m \mathcal{M}_\alpha \mu(x)  +\sum_{\substack{x\in Q\in \mathcal{D} \\ \ell(Q)\le 2^{j_0-m}}} \frac{\mu(Q)}{\ell(Q)^{d-\alpha}}\\ 
    \le & \lambda + m \varepsilon \lambda +\sum_{k\le j_0-m} g_k(x),
\end{align*}
where $g_k(x)$ is defined as
\begin{equation*}
    g_k(x)=\sum_{x\in Q \in \mathcal{D}_k} \frac{\mu(Q)}{\ell(Q)^{d-\alpha}} \quad \text{ for } k\in \Z
\end{equation*}
and $\mathcal{D}_k= \{Q\in \mathcal{D} : \ell(Q)=2^k\}$.  Observe that only one nonzero term exists for each $x$ in the previous sum.

Let $\eta= \frac{\beta-(d-\alpha)}{2}$, we remark that since $\beta\in (d-\alpha, d]$ we have $\eta>0$. We claim
\begin{equation}\label{aux1}
    S_{Q'}\subseteq  \bigcup_{k\le j_0-m} \{ x\in Q' : g_k(x)> \lambda(1-m\varepsilon)2^{\eta (k-j_0+m)} (1-2^{-\eta})\}.
\end{equation}

To this end, observe that if $x\in S_{Q'}$, we have
\begin{equation*}
    2\lambda<  I_\alpha^\mathcal{D} \mu(x) \le \lambda + m \varepsilon \lambda +\sum_{k\le j_0-m} g_k(x)
\end{equation*}
and thus,
\begin{equation}\label{eq gk }
    \lambda(1-m\varepsilon)<\sum_{k\le j_0-m} g_k(x).
\end{equation}

If we assume that 
\begin{equation*}
    g_k(x)\le \lambda(1-m\varepsilon)2^{\eta (k-j_0+m)} (1-2^{-\eta})
\end{equation*}
for all $k\le j_0-m$, then 
\begin{equation*}
    \sum_{k\le j_0-m} g_k(x)\le \lambda(1-m\varepsilon)(1-2^{-\eta})\sum_{k\le j_0-m}2^{\eta (k-j_0+m)} = \lambda(1-m\varepsilon),
\end{equation*}
which contradicts to \eqref{eq gk }. Thus, there exists $k\le j_0-m$ such that 
\begin{equation*}
    g_k(x)> \lambda(1-m\varepsilon)2^{\eta (k-j_0+m)} (1-2^{-\eta}),
\end{equation*}
and we conclude the proof of the claim. 

Observe that for any $\gamma>0$ and any $k\le j_0$, we have
\begin{align}\label{aux2}
 \nonumber    \mathcal{H}^{\beta, Q_0}_\infty ( \{ x\in Q' : g_k(x)>\gamma \}) \le & \frac{1}{\gamma} \int_{Q'} g_k \, d\mathcal{H}^{\beta, Q_0}_\infty \\
\nonumber     = & \frac{1}{\gamma} \sum_{\substack{x\in Q \in \mathcal{D}_k \\ Q\subseteq Q'} } \frac{\mu(Q)}{\ell(Q)^{d-\alpha}} \ell(Q)^\beta \\
 \nonumber    = & \frac{1}{\gamma} \sum_{\substack{x\in Q \in \mathcal{D}_k \\ Q\subseteq Q'} } \ell(Q)^{\beta-(d-\alpha)} \mu(Q) \\
  \nonumber   \le & \frac{1}{\gamma}  2^{2\eta k } \mu(Q') \\
\nonumber     = & \frac{1}{\gamma}  2^{2\eta k } \frac{1}{\ell(Q')^{2\eta}} \ell(Q')^\beta \frac{\mu(Q')}{\ell(Q')^{d-\alpha}} \\
\nonumber     \le & \frac{1}{\gamma}  2^{2\eta (k-j_0) }  \ell(Q')^\beta \inf_{z\in Q'} \mathcal{M}_\alpha \mu(z) \\
 \nonumber    \le & \frac{1}{\gamma}  2^{2\eta (k-j_0) }  \ell(Q')^\beta  \mathcal{M}_\alpha \mu(x_{Q'}) \\
    \le & \frac{1}{\gamma}  2^{2\eta (k-j_0) }  \ell(Q')^\beta  \varepsilon \lambda,
\end{align}
where $x_{Q'}\in Q'$ such that $\mathcal{M}_\alpha \mu(x_{Q'})\le \varepsilon \lambda$, that we know exists because otherwise, $S_{Q'}$ would be empty.  

The combination of (\ref{aux1}) and (\ref{aux2}) then yields that
\begin{align*}
    \mathcal{H}^{\beta, Q_0}_\infty ( S_{Q'}) \le & \sum_{k\le j_0-m} \mathcal{H}^{\beta, Q_0}_\infty ( \{ x\in Q' : g_k(x)> \lambda(1-m\varepsilon)2^{\eta (k-j_0+m)} (1-2^{-\eta})\}) \\
    \le & \frac{1}{\lambda(1-m\varepsilon) (1-2^{-\eta})} \ell(Q')^\beta \varepsilon \lambda  \sum_{k\le j_0-m} \frac{2^{2\eta (k-j_0) }}{2^{\eta (k-j_0+m)}} \\
    = & \ell(Q')^\beta  \frac{\varepsilon}{1-m\varepsilon }  \frac{2^{-2\eta m}}{1-2^{-\eta}}  \sum_{k\le j_0-m} 2^{-\eta (j_0-m-k) } \\
     = & \ell(Q')^\beta  \frac{\varepsilon}{1-m\varepsilon }  \frac{2^{-2\eta m}}{1-2^{-\eta}}  \sum_{k=0}^\infty 2^{-\eta k } \\
     = & C_\eta 2^{-2\eta m} \frac{\varepsilon}{1-m\varepsilon }  \ell(Q')^\beta.
\end{align*}
We remark that the previous sum is finite because $\eta>0$. Now we choose $m\in \mathbb{N}\cup\{0\}$ such that 
\begin{equation*}
    \frac{1}{\varepsilon}-2 \le m < \frac{1}{\varepsilon}-1,
\end{equation*}
then $\frac{\varepsilon}{1-m\varepsilon }<1$, and hence, there exists $c=c(d,\alpha,\beta)$ such that
\begin{equation*}
    \mathcal{H}^{\beta, Q_0}_\infty ( S_{Q'}) \le C_\eta 2^{-\frac{2\eta}{\varepsilon}}  \ell(Q')^\beta = C_\eta e^{-\frac{c}{\varepsilon}}  \ell(Q')^\beta.
\end{equation*}

Using the previous estimate and the packing condition of the family $\{Q_{j_k}^{\lambda}\}_k$, we have
\begin{align*}
    \mathcal{H}_\infty^{\beta,Q_0} ( S ) \le & \sum_{k: Q_{j_k}^\lambda \not \subseteq \tilde{Q}_m} \mathcal{H}_\infty^{\beta,Q_0} (S_{Q_{j_k}^\lambda}) +\sum_{k} \mathcal{H}_\infty^{\beta,Q_0} (S_{\tilde{Q}_k})\\
    \le &  C_\eta e^{-\frac{c}{\varepsilon}}  \left( \sum_{k: Q_{j_k}^\lambda \not \subseteq \tilde{Q}_m} \ell (Q_{j_k}^\lambda)^\beta  +\sum_{k} \ell( \tilde{Q}_k)^\beta   \right) \\
    \le & 2C_\eta e^{-\frac{c}{\varepsilon}} \mathcal{H}^{\beta, Q_0}_\infty \left( \bigcup_{j} Q_{j}^{\lambda}\right)   \\ 
    = & 2C_\eta e^{-\frac{c}{\varepsilon}} \mathcal{H}^{\beta, Q_0}_\infty \left(G_\lambda\right) .
\end{align*}
\end{proof}

As a consequence of the previous good-lambda estimate, we give the proof of Theorem \ref{wheedenHausdorffcontent}.

\begin{proof}[Proof of Theorem \ref{wheedenHausdorffcontent}]
    We observe that it is enough to prove \eqref{MW strong} and \eqref{MW weak} for the dyadic Riesz potential $I_\alpha^\mathcal{D}$ with a constant independent of the dyadic grid $\mathcal{D}$. To show this, we use \eqref{pointwiseequiv}, and we obtain the desired inequality,
    \begin{align*}
        \Vert I_\alpha \mu \Vert_{L^p (\mathcal{H}^\beta_\infty)} \leq & c_{d,\alpha} \left\| \sum_{i\in \{0,\frac{1}{3}\}^d } I_\alpha^{\mathcal{D}^i} \mu \right\|_{L^p (\mathcal{H}^\beta_\infty)}\\
        \le &  c_\beta c_{d,\alpha}  \sum_{i\in \{0,\frac{1}{3}\}^d } \Vert  I_\alpha^{\mathcal{D}^i} \mu \Vert_{L^p (\mathcal{H}^\beta_\infty)}\\
        \le & C p\,  c_\beta c_{d,\alpha}  \sum_{i\in \{0,\frac{1}{3}\}^d }  \Vert \mathcal{M}_\alpha \mu \Vert_{L^p(\mathcal{H}^{\beta}_\infty)}  \\
        = & C_{\beta, \alpha, d} p  \Vert \mathcal{M}_\alpha \mu \Vert_{L^p(\mathcal{H}^{\beta}_\infty)} . 
    \end{align*}
    Similarly, we can obtain \eqref{MW weak} from the dyadic estimate.
    
    Let us first assume that $\mu$ has compact support. We may also assume that $\Vert \mathcal{M}_\alpha \mu \Vert_{L^p(\mathcal{H}^{\beta}_\infty)}$ is finite because otherwise, there is nothing to prove. Let $\mathcal{D}$ a dyadic grid, using Theorem \ref{good lambda exp}, and the subadditivity of $ \mathcal{H}^\beta_\infty$ we obtain, 
    \begin{align*}
        \mathcal{H}^\beta_\infty( \{ x\in \R^d : I_\alpha^\mathcal{D} \mu (x)>2\lambda\})   & \leq \mathcal{H}^\beta_\infty( \{ x\in \R^d : I_\alpha^\mathcal{D} \mu(x)>2\lambda, \mathcal{M}_\alpha \mu(x)\le \varepsilon \lambda \}) \\
        &+   \mathcal{H}^\beta_\infty( \{ x\in \R^d : \mathcal{M}_\alpha \mu(x)> \varepsilon \lambda\})\\
        & \leq  C e^{-\frac{c}{\varepsilon}}  \mathcal{H}^\beta_\infty(\{ x\in \R^d : I_\alpha^\mathcal{D} \mu (x)>\lambda\}) \\
        & + C \mathcal{H}^\beta_\infty( \{ x\in \R^d : \mathcal{M}_\alpha \mu(x)> \varepsilon \lambda\}),
    \end{align*}
    for all $\lambda>0$ and all $0<\varepsilon<1$. Let $N>0$,  we have
    \begin{align*}
        \int_0^N \lambda^{p-1} \mathcal{H}^\beta_\infty( \{ x\in \R^d : I_\alpha^\mathcal{D} \mu (x)>\lambda\}) d\lambda & =   2^p \int_0^\frac{N}{2} \lambda^{p-1} \mathcal{H}^\beta_\infty( \{ x\in \R^d : I_\alpha^\mathcal{D} \mu(x) >2\lambda\}) d\lambda \\
         & \leq  2^p C e^{-\frac{c}{\varepsilon}} \int_0^\frac{N}{2} \lambda^{p-1} \mathcal{H}^\beta_\infty( \{ x\in \R^d : I_\alpha^\mathcal{D} \mu(x)>\lambda\}) d\lambda \\
        & +2^p C \int_0^\frac{N}{2} \lambda^{p-1} \mathcal{H}^\beta_\infty( \{ x\in \R^d : \mathcal{M}_\alpha \mu(x)>\varepsilon\lambda\}) d\lambda \\
          & \leq  2^p C e^{-\frac{c}{\varepsilon}} \int_0^N \lambda^{p-1} \mathcal{H}^\beta_\infty( \{ x\in \R^d : I_\alpha^\mathcal{D} \mu(x)>\lambda\}) d\lambda \\
        & +\frac{2^p C }{\varepsilon^p}\int_0^\frac{N\varepsilon}{2} \lambda^{p-1} \mathcal{H}^\beta_\infty( \{ x\in \R^d : \mathcal{M}_\alpha \mu(x)>\lambda\}) d\lambda
    \end{align*} 
    for all $0<\varepsilon<1$. Notice that the first term is finite because $\mu$ has compact support.  Let $B$ be a ball such that $\supp{\mu}\subseteq B$. If $x\notin 2B$, we have $r(B)>|x-y|$ for all $y\in B$. Hence, 
    \begin{equation*}
        I_\alpha \mu (x) = \frac{1}{\gamma(\alpha)}\int_{B} \frac{d\mu(y)}{|x-y|^{d-\alpha }} \le \frac{1}{\gamma(\alpha)} \frac{\mu(B)}{r(B)^{d-\alpha}} \le \frac{1}{\gamma(\alpha)} \mathcal{M}_\alpha \mu(x), 
    \end{equation*}
    for all $x\notin 2B$.  Using the previous estimate, we obtain
    \begin{align*}
        \{ x\in \R^d :  I_\alpha \mu (x)>\lambda \} = &   \{ x\in 2B :  I_\alpha \mu (x)>\lambda \} \cup  \{ x\in (2B)^c :  I_\alpha \mu (x)>\lambda \}\\
        \subseteq &  \{ x\in 2B :  I_\alpha \mu (x)>\lambda \} \cup  \{ x\in (2B)^c :  \mathcal{M}_\alpha \mu (x)>\gamma(\alpha) \lambda \},
    \end{align*}
    for all $\lambda>0$. Then, we have
    \begin{align*}
        \int_0^N \lambda^{p-1} \mathcal{H}^\beta_\infty( \{ x\in \R^d : I_\alpha \mu(x)>\lambda\}) d\lambda  & \leq  \int_0^N \lambda^{p-1} \mathcal{H}^\beta_\infty( \{ x\in 2B : I_\alpha \mu(x)>\lambda\}) d\lambda \\
        &+ \int_0^N \lambda^{p-1} \mathcal{H}^\beta_\infty( \{ x\in \R^d : \mathcal{M}_\alpha \mu(x)>\gamma(\alpha) \lambda\}) d\lambda\\
         & \leq  r(2B)^\beta \frac{N^p}{p} + \frac{1}{\gamma(\alpha)^p} \Vert \mathcal{M}_\alpha \mu \Vert_{L^p(\mathcal{H}^{\beta}_\infty)}^p \\
        & <  \infty,
    \end{align*}
    and using \eqref{dyadicequiv1} we obtain that the dyadic counterpart is also finite.

    We take $\varepsilon$ such that $2^p C e^{-\frac{c}{\varepsilon}} =\frac{1}{2}$, that is $\varepsilon=\frac{c}{\log (2^{p+1}C)}$. Then, since the right-hand side is finite, we can absorb it into the left-hand side, and we obtain
    \begin{align*}
        \int_0^N \lambda^{p-1} \mathcal{H}^\beta_\infty( \{ x\in \R^d : I_\alpha^\mathcal{D} \mu(x)>\lambda\}) d\lambda \le & 2^{p+1} C \frac{\log (2^{p+1}C)^p}{c^p p} \int_{\R^d} |\mathcal{M}_\alpha \mu|^p \, d\mathcal{H}^\beta_\infty .
    \end{align*} 
     We obtain the desired inequality letting $N\to \infty$, 
    \begin{align*}
        \left( \int_{\R^d} |I_\alpha^\mathcal{D} \mu|^p \, d\mathcal{H}^\beta_\infty \right) ^\frac{1}{p} \le & c_d \frac{\log (2^{p+1}C)}{c } \left( \int_{\R^d} |\mathcal{M}_\alpha \mu|^p \, d\mathcal{H}^\beta_\infty  \right)^\frac{1}{p} \\ 
        \le & \tilde{C} \, p \left( \int_{\R^d} |\mathcal{M}_\alpha \mu|^p \, d\mathcal{H}^\beta_\infty  \right)^\frac{1}{p} 
    \end{align*}
    where $\tilde{C}= \tilde{C}(d, \beta, \alpha)>0$ . 

    Let us consider now the general case. Given a measure $\mu$, we define $\mu_R=\mu\chi_{B(0,R)}$. Since $\mu_R$ has compact support, we have 
    \begin{equation*}
        \Vert I_\alpha^\mathcal{D} \mu_R \Vert_{L^p (\mathcal{H}^\beta_\infty)} \leq C\,  p \Vert \mathcal{M}_\alpha \mu_R \Vert_{L^p(\mathcal{H}^{\beta}_\infty)} \le C\,  p \Vert \mathcal{M}_\alpha \mu \Vert_{L^p(\mathcal{H}^{\beta}_\infty)}.
    \end{equation*}
    Since $\mathcal{H}^{\beta}_\infty$ is continuous from below, we may apply the monotone convergence theorem (see \cite{PS_2023} for instance), and we obtain the desired inequality.

    In order to prove \eqref{MW weak}, let us first assume that $\mu$ has compact support. We may assume that $\Vert \mathcal{M}_\alpha \mu \Vert_{L^{p,\infty}(\mathcal{H}^{\beta}_\infty)}<\infty$, because otherwise there is nothing to prove. Let $N>0$, using again Theorem \ref{good lambda exp}, multiplying by $\lambda^p$ and taking the supremum over $0<\lambda\le N$ we obtain the following estimate,
    \begin{align*}
        \sup_{0<\lambda\le N} \lambda^p \mathcal{H}^\beta_\infty( \{ x\in \R^d : I_\alpha^\mathcal{D} \mu (x)>2\lambda\}) & \le  C e^{-\frac{c}{\varepsilon}} \sup_{0<\lambda \le N} \lambda^p  \mathcal{H}^\beta_\infty(\{ x\in \R^d : I_\alpha^\mathcal{D} \mu (x)>\lambda\}) \\
        & + C \sup_{0<\lambda \le N} \lambda^p  \mathcal{H}^\beta_\infty(\{ x\in \R^d : \mathcal{M}_\alpha \mu (x)>\varepsilon \lambda\}). 
    \end{align*}
    Therefore, 
    \begin{align*}
        \frac{1}{2^p} \sup_{0<\lambda\le 2N} \lambda^p \mathcal{H}^\beta_\infty( \{ x\in \R^d : I_\alpha^\mathcal{D} \mu (x)>\lambda\}) & \le  C e^{-\frac{c}{\varepsilon}} \sup_{0<\lambda \le 2N} \lambda^p  \mathcal{H}^\beta_\infty(\{ x\in \R^d : I_\alpha^\mathcal{D} \mu (x)>\lambda\}) \\
        & + \frac{C}{\varepsilon^p} \Vert \mathcal{M}_\alpha \mu \Vert_{L^{p,\infty}(\mathcal{H}^{\beta}_\infty)}^p.
    \end{align*}

    Observe that the first term is finite because $\mu$ has compact support. Let $B$ be a ball such that $\supp{\mu} \subseteq B$. Using the same argument that before, we have
    \begin{align*}
        \sup_{0<\lambda\le 2N} \lambda^p \mathcal{H}^\beta_\infty( \{ x\in \R^d : I_\alpha \mu (x)>\lambda\}) & \le  \sup_{0<\lambda\le 2N} \lambda^p \mathcal{H}^\beta_\infty( \{ x\in 2B : I_\alpha \mu (x)>\lambda\})\\
        &+ \sup_{0<\lambda\le 2N} \lambda^p \mathcal{H}^\beta_\infty( \{ x\in \R^d : \mathcal{M}_\alpha \mu (x)>\gamma(\alpha) \lambda\})\\
       & \le   (2N)^p r(2B)^\beta +\frac{1}{\gamma(\alpha)^p}\Vert \mathcal{M}_\alpha \mu \Vert_{L^{p,\infty}(\mathcal{H}^{\beta}_\infty)}^p\\
       & <  \infty,
    \end{align*}
    and thus, using \eqref{dyadicequiv1}, we obtain that the dyadic counterpart is also finite.

    We take again $\varepsilon$ such that $2^p C e^{-\frac{c}{\varepsilon}} =\frac{1}{2}$, that is $\varepsilon=\frac{c}{\log (2^{p+1}C)}$. Then, since the right-hand side is finite, we can absorb it into the left-hand side, and we obtain
    \begin{align*}
         \sup_{0<\lambda\le 2N} \lambda \mathcal{H}^\beta_\infty( \{ x\in \R^d : I_\alpha^\mathcal{D} \mu (x)>\lambda\}) ^\frac{1}{p} \le & c_d \frac{\log (2^{p+1}C)}{c }  \Vert \mathcal{M}_\alpha \mu \Vert_{L^{p,\infty}(\mathcal{H}^{\beta}_\infty)}\\
         \le & C' p \Vert \mathcal{M}_\alpha \mu \Vert_{L^{p,\infty}(\mathcal{H}^{\beta}_\infty)}.
    \end{align*}
    Letting $N\to \infty$, we obtain the desired inequality. The case of a general measure $\mu$ is obtained in the same way as before. 
\end{proof}

Using Proposition \ref{PropCruzUribeMoen}, it is easy to see that \eqref{good lambda exp} implies the following good lambda inequality for the Riesz potential.

\begin{corollary}
     Let $0<\alpha <d \in \mathbb{N}$, $\beta\in (d-\alpha, d]$ and $\mu$ be a locally finite measure in $\mathbb{R}^d$. Then, there exist constants $C,c>0$ depending on $d, \alpha, \beta$ such that 
    \begin{align}\label{good lambda exp 2}
    \begin{split}
        \mathcal{H}^\beta_\infty(\{ x\in \R^d :  I_\alpha \mu (x)>2^{d+1}\gamma_1 \lambda, \mathcal{M}_\alpha \mu(x) \le \varepsilon \lambda \}) \le & C e^{-\frac{c}{\varepsilon}}  \mathcal{H}^\beta_\infty(\{ x\in \R^d :  I_\alpha \mu (x)> \gamma_2^{-1} \lambda\}) 
    \end{split}
    \end{align}
    for all $\lambda>0$ and all $0<\varepsilon<1$, where $\gamma_1,\gamma_2>1$ are the constants in \eqref{pointwiseequiv} and \eqref{dyadicequiv1}, respectively.
\end{corollary}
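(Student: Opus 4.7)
The plan is to transfer the dyadic good-lambda inequality of Theorem \ref{goodlambdaestimaterelatedRieszandfractional} to the continuous Riesz potential $I_\alpha \mu$ using Proposition \ref{PropCruzUribeMoen}. The two constants $\gamma_1$ and $\gamma_2$ in the statement precisely track the two directions of the pointwise equivalence between $I_\alpha\mu$ and the family $\{I_\alpha^{\mathcal{D}^i}\mu\}_{i\in\{0,1/3\}^d}$: $\gamma_1$ appears when dominating $I_\alpha\mu$ from above by a sum of dyadic pieces, and $\gamma_2$ appears when dominating each single dyadic piece from above by $I_\alpha\mu$.

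First I would observe that if $x$ satisfies $I_\alpha\mu(x) > 2^{d+1}\gamma_1 \lambda$, then the estimate $I_\alpha\mu \le \gamma_1 \sum_i I_\alpha^{\mathcal{D}^i}\mu$ from \eqref{pointwiseequiv} forces
$$\sum_{i\in \{0,\frac{1}{3}\}^d} I_\alpha^{\mathcal{D}^i}\mu(x) > 2^{d+1}\lambda.$$
Since $\{0,\tfrac{1}{3}\}^d$ has exactly $2^d$ elements, the pigeonhole principle produces an index $i=i(x)$ with $I_\alpha^{\mathcal{D}^{i}}\mu(x) > 2\lambda$. This gives the inclusion
$$\{I_\alpha\mu > 2^{d+1}\gamma_1\lambda,\ \mathcal{M}_\alpha\mu \le \varepsilon\lambda\} \subseteq \bigcup_{i \in \{0,\frac{1}{3}\}^d} \{I_\alpha^{\mathcal{D}^i}\mu > 2\lambda,\ \mathcal{M}_\alpha\mu \le \varepsilon\lambda\}.$$

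Next, I would apply the subadditivity of $\mathcal{H}^\beta_\infty$ together with Theorem \ref{goodlambdaestimaterelatedRieszandfractional} separately on each of the $2^d$ shifted grids $\mathcal{D}^i$, obtaining
$$\mathcal{H}^\beta_\infty(\{I_\alpha\mu > 2^{d+1}\gamma_1\lambda,\ \mathcal{M}_\alpha\mu \le \varepsilon\lambda\}) \le 2^d\, C\, e^{-c/\varepsilon}\, \max_{i\in\{0,\frac{1}{3}\}^d} \mathcal{H}^\beta_\infty(\{I_\alpha^{\mathcal{D}^i}\mu > \lambda\}).$$
To replace $I_\alpha^{\mathcal{D}^i}\mu$ by $I_\alpha\mu$ on the right-hand side, I would invoke \eqref{dyadicequiv1}, namely $I_\alpha^{\mathcal{D}^i}\mu \le \gamma_2 I_\alpha\mu$, which yields the inclusion $\{I_\alpha^{\mathcal{D}^i}\mu > \lambda\} \subseteq \{I_\alpha\mu > \gamma_2^{-1}\lambda\}$. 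Monotonicity of $\mathcal{H}^\beta_\infty$ and absorption of the factor $2^d$ into the constant $C$ then conclude the proof of \eqref{good lambda exp 2}.

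Since the statement is essentially a dyadic-to-continuous transfer, there is no real obstacle in the argument; the only point requiring attention is the bookkeeping of the constants, which must be arranged so that the pigeonhole loss $2^d$ and the dyadic--continuous comparability constants $\gamma_1, \gamma_2$ match exactly the thresholds $2^{d+1}\gamma_1\lambda$ and $\gamma_2^{-1}\lambda$ appearing in the statement.
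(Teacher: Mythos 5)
Your proposal is correct and follows essentially the same route the paper intends (the paper leaves the verification to the reader as an easy consequence of Proposition \ref{PropCruzUribeMoen}): use $I_\alpha\mu\le\gamma_1\sum_i I_\alpha^{\mathcal{D}^i}\mu$ plus pigeonhole over the $2^d$ shifted grids to reduce to the dyadic level sets $\{I_\alpha^{\mathcal{D}^i}\mu>2\lambda\}$, apply Theorem \ref{goodlambdaestimaterelatedRieszandfractional} on each grid, and return to $I_\alpha\mu$ via $I_\alpha^{\mathcal{D}^i}\mu\le\gamma_2 I_\alpha\mu$, absorbing the factor $2^d$ into $C$. The constant bookkeeping matches the thresholds $2^{d+1}\gamma_1\lambda$ and $\gamma_2^{-1}\lambda$ exactly as stated.
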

   
 \begin{comment}
\begin{proof}
    \begin{align*}
       & \mathcal{H}^\beta_\infty(\{ x\in \R^d :  I_\alpha \mu (x)>2^{d+1}c_{\alpha, d} \lambda, \mathcal{M}_\alpha \mu(x) \le \varepsilon \lambda \}) \\
       & \le    \mathcal{H}^\beta_\infty(\{ x\in \R^d :  c_{\alpha, d} \sum_{i\in \{ 0, \frac{1}{3}\}^d }I_\alpha^{\mathcal{D}^i} \mu (x) > 2^{d+1} c_{\alpha, d} \lambda, \mathcal{M}_\alpha \mu(x) \le \varepsilon \lambda \}) \\
        & =  \mathcal{H}^\beta_\infty(\{ x\in \R^d :   \sum_{i\in \{ 0, \frac{1}{3}\}^d }I_\alpha^{\mathcal{D}^i} \mu (x) > 2^{d+1} \lambda, \mathcal{M}_\alpha \mu(x) \le \varepsilon \lambda \})\\
        & \le  \sum_{i\in \{ 0, \frac{1}{3}\}^d } \mathcal{H}^\beta_\infty(\{ x\in \R^d :   I_\alpha^{\mathcal{D}^i} \mu (x) > 2 \lambda, \mathcal{M}_\alpha \mu(x) \le \varepsilon \lambda \})\\ 
        & \le C e^{-\frac{c}{\varepsilon}} \sum_{i\in \{ 0, \frac{1}{3}\}^d }  \mathcal{H}^\beta_\infty(\{ x\in \R^d :  I_\alpha^{\mathcal{D}^i} \mu (x)>\lambda\}) \\
        & \le C e^{-\frac{c}{\varepsilon}} \sum_{i\in \{ 0, \frac{1}{3}\}^d }  \mathcal{H}^\beta_\infty(\{ x\in \R^d : c_{\alpha, d}' I_\alpha \mu (x)>\lambda\}) \\
       &  =  2^d C e^{-\frac{c}{\varepsilon}}  \mathcal{H}^\beta_\infty(\{ x\in \R^d :  I_\alpha \mu (x)>c_{d,\alpha}^{-1} \lambda\}) \\
    \end{align*}
\end{proof}
\end{comment}

As a consequence of the good lambda inequality with exponential decay \eqref{good lambda exp 2}, we can prove the following result about the exponential integrability of $I_\alpha \mu$  for measures in the local Morrey space $\mathcal{M}^{n-\alpha}(B)$. This is the natural endpoint $p=\infty$ in Theorem \ref{wheedenHausdorffcontent}. We give an alternative proof of a result of Adams and Xiao \cite{AdamsXiao2} (see also \cite[Theorem 1.7]{chen2024selfimproving}).

\begin{theorem}\label{Thm exp int}
     Let $0<\alpha<d \in \mathbb{N}$, $\beta \in( d-\alpha, d ]$ and $\mu$ be a locally finite measure in $\mathbb{R}^d$. Then there exist $C=C(d,\alpha, \beta)>0$ and $\gamma=\gamma(d,\alpha, \beta)>0$ such that 
    \begin{align*}
         \frac{1}{r(2B)^\beta } \int_{2B} \exp \left(  \frac{\gamma \, I_\alpha \mu (x) }{\left\| \mu\right\|_{\mathcal{M}^{d-\alpha}(B)} } \right) \, d \mathcal{H}^\beta_\infty \le C,
    \end{align*}
  for balls $B$ such that $\mu \in \mathcal{M}^{d-\alpha}(B)$. 
\end{theorem}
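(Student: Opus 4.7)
The plan is to deduce the exponential integrability from the linear $p$-dependence in Theorem \ref{wheedenHausdorffcontent} by a Taylor expansion of the exponential. After the homogeneous normalization $M := \|\mu\|_{\mathcal{M}^{d-\alpha}(B)} = 1$, the key input is that $\mu \in \mathcal{M}^{d-\alpha}(B) \subseteq M_{\loc}(B)$ forces $\mu$ to be supported inside $B$, which is what allows one to control $\mathcal{M}_\alpha \mu$ globally.

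First I would establish two pointwise estimates: a uniform bound $\mathcal{M}_\alpha \mu(x) \le C_d$ for every $x \in \R^d$, and the polynomial decay $\mathcal{M}_\alpha \mu(x) \le C_d (r(B)/|x-x_B|)^{d-\alpha}$ for $|x-x_B| \ge 2r(B)$. Both follow from the observation that any cube $Q$ with $\mu(Q) \ne 0$ must intersect $B$, so fixing $y \in Q \cap B$ one has $Q \subseteq B(y,\sqrt{d}\,\ell(Q))$ and hence $\mu(Q) \le (\sqrt d\,\ell(Q))^{d-\alpha}$ from the Morrey condition; when $|x-x_B|$ is large, the sharper estimate uses $\ell(Q) \gtrsim |x-x_B|$ together with $\mu(Q) \le \mu(B) \lesssim r(B)^{d-\alpha}$. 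A layer-cake computation then yields
\[
\int_{\R^d}(\mathcal{M}_\alpha \mu)^p\,d\mathcal{H}^\beta_\infty \le C_d^p\,r(B)^\beta \qquad \text{for } p \ge p_0 := 2\beta/(d-\alpha),
\]
and this is precisely where the hypothesis $\beta > d-\alpha$ enters: the tail integral $\int_{(2B)^c}|x-x_B|^{-p(d-\alpha)}\,d\mathcal{H}^\beta_\infty$ converges exactly when $p(d-\alpha) > \beta$. Plugging this into Theorem \ref{wheedenHausdorffcontent} and bounding $\int_{2B} \le \int_{\R^d}$ gives
\[
\int_{2B}(I_\alpha\mu)^k\,d\mathcal{H}^\beta_\infty \le (Ck)^k\,r(B)^\beta \qquad \text{for every integer } k \ge p_0,
\]
while a Choquet--H\"older interpolation (Lemma \ref{basicChoquet}(iii)) applied to $(I_\alpha\mu)^k \cdot 1$ with exponents $(p_0/k, p_0/(p_0-k))$ extends the bound to all $0 \le k < p_0$ with an absolute dimensional constant.

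Expanding the exponential termwise and invoking the countable subadditivity \eqref{sublinearofHbeta} then yields
\[
\int_{2B}\exp(\gamma I_\alpha\mu)\,d\mathcal{H}^\beta_\infty \le C_\beta^2 \sum_{k=0}^\infty \frac{\gamma^k}{k!}\int_{2B}(I_\alpha\mu)^k\,d\mathcal{H}^\beta_\infty \le C_\beta^2\,r(B)^\beta \sum_{k=0}^\infty \frac{(C\gamma k)^k}{k!},
\]
and Stirling's inequality $k^k/k! \le e^k$ makes the series geometric once $eC\gamma < 1$. Choosing $\gamma = \gamma(d,\alpha,\beta)$ small enough and dividing through by $r(2B)^\beta \cong r(B)^\beta$ finishes the proof once the normalization $M = 1$ is undone. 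The main obstacle is the failure of linearity of the Choquet integral, which rules out an exact identity between $\int \exp(\gamma I_\alpha \mu)\,d\mathcal{H}^\beta_\infty$ and its Taylor expansion; the saving feature is that \eqref{sublinearofHbeta} costs only the dimensional factor $C_\beta^2$, independent of $k$, so the combinatorics of the exponential series survives the inequality intact.
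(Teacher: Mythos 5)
Your argument is correct, but it reaches the conclusion by a genuinely different route from the paper. The paper's proof applies the good-$\lambda$ inequality with exponential decay \eqref{good lambda exp 2} directly, choosing $\varepsilon=1/\lambda$: after normalization the bad set $\{\mathcal{M}_\alpha\mu>\varepsilon\lambda\}$ is empty, the observation that $\{I_\alpha\mu>\lambda\}\subseteq 2B$ for $\lambda$ large localizes everything, and one gets the pure exponential decay $\mathcal{H}^\beta_\infty(\{I_\alpha\mu>2^{d+1}\gamma_1\lambda\})\le Ce^{-c\lambda}r(2B)^\beta$, after which a single layer-cake integration finishes. You instead treat Theorem \ref{wheedenHausdorffcontent} as a black box and sum moments: the linear growth $\|I_\alpha\mu\|_{L^k(\mathcal{H}^\beta_\infty)}\le Ck\,\|\mathcal{M}_\alpha\mu\|_{L^k(\mathcal{H}^\beta_\infty)}$ together with your uniform bound $\|\mathcal{M}_\alpha\mu\|_{L^k(\mathcal{H}^\beta_\infty)}^k\le C^k r(B)^\beta$ (which is correct: every cube charged by $\mu$ meets $B$, giving $\mathcal{M}_\alpha\mu\lesssim\min\bigl(1,(r(B)/|x-x_B|)^{d-\alpha}\bigr)$, and the layer-cake integral converges uniformly for $k\ge p_0$) yields $(Ck)^k r(B)^\beta$ for the $k$-th moment; Stirling and the $k$-independent constant in \eqref{sublinearofHbeta} then make the Taylor series of the exponential summable for small $\gamma$, with the finitely many moments $k<p_0$ handled by Lemma \ref{basicChoquet}(iii). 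Both routes ultimately rest on Theorem \ref{goodlambdaestimaterelatedRieszandfractional}; the paper's is shorter and needs no quantitative estimate on $\mathcal{M}_\alpha\mu$ beyond boundedness, whereas yours is more modular and shows that the linear $p$-dependence in \eqref{MW strong} already encodes the endpoint exponential integrability. Two small remarks: the place where $\beta>d-\alpha$ is genuinely needed in your argument is the applicability of Theorem \ref{wheedenHausdorffcontent} (for large $k$ the tail condition $k(d-\alpha)>\beta$ holds for any $\beta>0$, so the convergence of the tail integral is not the true constraint); and, like the paper, you are implicitly reading the hypothesis $\mu\in\mathcal{M}^{d-\alpha}(B)$ as saying that $\mu$ is supported in $B$ --- this should be stated explicitly, since the theorem's $\mu$ is a priori a measure on all of $\mathbb{R}^d$.
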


\begin{proof}
    Let $B$ be a ball in $\R^d$ such that $\left\| \mu\right\|_{\mathcal{M}^{d-\alpha}(B)}<\infty$. We can assume $\left\| \mu\right\|_{\mathcal{M}^{d-\alpha}(B)}\le 1$. Observe that $ r(B)\le |x-y|$ for any $y\in B$ and $x\notin 2 B$, hence
    \begin{equation*}
        I_\alpha \mu (x) = \frac{1}{\gamma(\alpha)} \int_B \frac{d\mu(y)}{|x-y|^{d-\alpha}} \le \frac{1}{\gamma(\alpha)} \frac{\mu(B)}{r(B)^{d-\alpha}} \le \frac{1}{\gamma(\alpha)} \left\| \mu\right\|_{\mathcal{M}^{d-\alpha}(B)} \le \frac{1}{\gamma(\alpha)},  
    \end{equation*}
    for each $x\notin 2 B$.
    
    Therefore, if $\lambda>\frac{1}{\gamma(\alpha)}$ we have $\{x\in \R^d : I_\alpha \mu (x)>\lambda \} \subseteq 2 B$. For fixed $\lambda>\frac{\gamma_2}{\gamma(\alpha)}>\frac{1}{\gamma(\alpha)}$, applying \eqref{good lambda exp 2} with $\varepsilon=\frac{1}{\lambda}$ we obtain 
    \begin{align*}
        \mathcal{H}^\beta_\infty(\{ x\in \R^d : I_\alpha \mu (x)>2^{d+1} \gamma_1 \lambda\}) &\le  C e^{-c\lambda}  \mathcal{H}^\beta_\infty(\{ x\in \R^d : I_\alpha \mu (x)>\gamma_2^{-1}\lambda\}) \\
        & + \mathcal{H}^\beta_\infty(\{ x\in \R^d : \mathcal{M}_\alpha \mu (x)>1\}) \\
       & =  C e^{-c\lambda}  \mathcal{H}^\beta_\infty(\{ x\in \R^d : I_\alpha \mu  (x)>\gamma_2^{-1}\lambda\}) \\
       & \le  C e^{-c\lambda} r(2 B)^\beta .
    \end{align*}

    Using the previous estimate, we can conclude the proof, 
    \begin{align*}
        \int_{ 2 B} \exp \left( \frac{c}{2^{d+2} \gamma_1 } \, I_\alpha \mu (x)  \right) \, d \mathcal{H}^\beta_\infty & =  \frac{c}{2^{d+2} \gamma_1 } \int_0^\infty  e^{\frac{c}{2^{d+2} \gamma_1 }\lambda} \mathcal{H}^\beta_\infty(\{ x\in 2 B : I_\alpha \mu  (x)> \lambda\}) d\lambda \\
       & =  \frac{c}{2} \int_0^\infty  e^{\frac{c}{2}\lambda} \mathcal{H}^\beta_\infty(\{ x\in 2 B : I_\alpha \mu  (x)> 2^{d+1} \gamma_1\lambda\}) d\lambda \\
        & =  \frac{c}{2}   \int_0^{\frac{\gamma_2}{\gamma(\alpha)}}  e^{\frac{c}{2}\lambda} \mathcal{H}^\beta_\infty(\{ x\in 2 B : I_\alpha \mu (x)> 2^{d+1} \gamma_1 \lambda\}) d\lambda \\
        & + \frac{c}{2} \int_{\frac{\gamma_2}{\gamma(\alpha)}}^\infty  e^{\frac{c}{2}\lambda} \mathcal{H}^\beta_\infty(\{ x\in 2 B : I_\alpha \mu (x)> 2^{d+1} \gamma_1 \lambda\}) d\lambda \\
         & \le \frac{c e^{\frac{c}{2}}}{2} \frac{\gamma_2}{\gamma(\alpha)} r(2 B)^\beta     + C \frac{c}{2} r(2 B)^\beta  \int_{\frac{\gamma_2}{\gamma(\alpha)}}^\infty  e^{-\frac{c}{2}\lambda}   d\lambda \\
       & =  C'  r(2 B)^\beta. 
    \end{align*}
\end{proof}

\section{An application to PDEs: exponential integrability of gradient estimates for $p$-Laplace type equation with measure data}

In this section, we use Theorem \ref{Thm exp int} to prove the capacitary local exponential integrability of the gradient of weak solutions of singular $p$-Laplace type equations with measure data. We consider the quasilinear elliptic equation 
\begin{equation}\label{PDE1}
	-\operatorname{div} (A(x,\nabla u ))=\mu,
\end{equation}
in a bounded domain $\Omega\subset \R^d$, where $d\ge 2$. In here, $\mu$ is a finite Radon measure in $\Omega$. The continuous vector field $A = (A_1,...,A_d):\Omega \times \R^d \longrightarrow \R^d$ is $C^1$ in the gradient variable $z$, and it satisfies the following conditions:  
\begin{equation}\label{PDE2}
	\begin{cases}
		|A(x,z)|+|D_z A(x,z)|(|z|^2+s^2)^\frac{1}{2}\le L(|z|^2+s^2)^\frac{p-1}{2},\\
		\nu^{-1} ( |z|^2+ s^2) ^\frac{p-2}{2} |\lambda|^2 \le \langle D_z A(x,z)\lambda , \lambda\rangle,\\
		|A(x,z)-A(x_0, z)| \le L_1 \omega (|x-x_0|) (|z|^2+s^2)^\frac{p-1}{2}
	\end{cases}
\end{equation}
for all $x,x_0\in \Omega$ and $z, \lambda\in \R^d$. The parameters $\nu, L, s, L_1$ are fixed  and such that $0<\nu \le L$ and $s\ge 0$, $L_1\ge 0$, the function $\omega: [0, \infty)\longrightarrow [0,1]$ is a non-decreasing subadditive function satisfying 
\begin{equation}\label{PDE3}
	\lim_{r\downarrow 0} \omega (r) = \omega(0)=0,
\end{equation}
 and the Dini condition
 \begin{equation}\label{PDE4}
 	\int_0^1 \omega(r)\frac{dr}{r}<\infty. 
 \end{equation}
 
 The model example of \eqref{PDE1} is the $p$-Laplacian equation 
 \begin{equation*}
 	-\Delta_p u := - \operatorname{div} (|\nabla u |^{p-2} \nabla u ) = \mu \qquad \text{ in } \Omega.
 \end{equation*}
 We refer to \cite{DM10} for a more detailed explanation of the setting.

 The gradient estimates of the weak solutions of \eqref{PDE1} for $p\ge 2$ have been well studied in the literature \cite{DM10, DM11} in terms of nonlinear potentials. We are interested in the singular case $p\in (1,2)$. In the seminal work \cite{DM10} the authors prove the following gradient estimate for $p\in \left( 2-\frac{1}{d}, 2\right)$,  if $u\in C^1(\Omega)$ solves \eqref{PDE1}, then there exists $C=C(d, p, \lambda, \omega)>0$ such that
 \begin{equation}\label{grad estimate 1}
 	|\nabla u (x)| \le C I_1 (\mu \chi_{B(x,R)}) (x)^\frac{1}{p-1} + C \frac{1}{|B(x,R)|}\int_{B(x,R)} (|\nabla u(y)|+s)dy,
 \end{equation}
 for each ball $B(x,R)\subset \Omega$ with $R\in (0,1]$. Later, the case $p\in \left( \frac{3d-2}{2d-1}, 2-\frac{1}{d}\right)$ was studied in \cite{NP20}, and recently improved in \cite{DZ24}. % In \cite{DZ24} is proved that for $p\in \left( \frac{3d-2}{2d-1}, 2\right)$, if $u\in W^{1,p}_{\operatorname{loc}} (\Omega)$ is a weak solution of \eqref{PDE1}, then there exists $C=C(d, p, \lambda, \omega)>0$ such that
 %\begin{equation}\label{grad estimate 2}
 %\left( \frac{1}{|B(x,R)|}\int_{B(x,R)} (|\nabla u(y)|+s)^{2-p} dy\right)^\frac{1}{2-p},
 %\end{equation}
 %for each Lebesgue point $x$ of $\nabla u$ and each ball $B(x,R)\subset \Omega$ with $R\in (0,1]$.
 
 We can prove the following exponential integrability result using Theorem \ref{Thm exp int} and the gradient estimate \eqref{grad estimate 1}. 
 
 \begin{theorem} 
 Let $\beta\in (d-1, d]$, $p\in \left( 2-\frac{1}{d}, 2\right)$ and let $u\in C^1(\Omega)$ is a weak solution of \eqref{PDE1} under the assumptions \eqref{PDE2}--\eqref{PDE4}. Then, there exists $C_1,C_2,C_3>0$ such that 
 			\begin{equation}\label{PDE5}
 				\frac{1}{r(B)^\beta} \int_{B} \exp \left( C_1 \frac{|\nabla u |^{p-1}}{\left\| \mu\right\|_{\mathcal{M}^{d-1}(2B)} } \right) d \mathcal{H}_\infty^\beta \le C_2 \exp \left( \frac{C_3}{|2B|}\int_{2B} \frac{(|\nabla u(y)|+s)}{\left\| \mu\right\|_{\mathcal{M}^{d-1}(2B)} }dy  \right)^{p-1} 
 			\end{equation}
 			for all balls $B$ such that $2B\subset \Omega$ and  $\left\| \mu\right\|_{\mathcal{M}^{d-1}(2B)}<\infty$. 
 		%\item  If $p\in \left( \frac{3d-2}{2d-1}, 2\right)$ and $u\in W^{1,p}_{\operatorname{loc}} (\Omega)$ is a weak solution of \eqref{PDE1} under the assumptions \eqref{PDE2}-- \eqref{PDE4}, then there exists $C_1,C_2>0$ such that 
 			%\begin{equation}\label{PDE6}
 			%	\frac{1}{r(B)^\beta} \int_{B} \exp \left( C_1 \frac{|\nabla u |^{p-1}}{\left\| \mu\right\|_{\mathcal{M}^{d-1}(2B)} } \right) d \mathcal{H}_\infty^\beta \le C_2 \exp \left( \frac{1}{|2B|}\int_{2B} \frac{(|\nabla u(y)|+s)^{2-p}}{\left\| \mu\right\|_{\mathcal{M}^{d-1}(2B)} }dy  \right)^\frac{p-1}{2-p}
 		%	\end{equation}
 			%for all balls $B$ such that $2B\subset \Omega$ and  $\left\| \mu\right\|_{\mathcal{M}^{d-1}(2B)}<\infty$. 
 \end{theorem}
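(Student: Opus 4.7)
The plan is to combine the pointwise gradient bound \eqref{grad estimate 1} of Duzaar--Mingione with the exponential integrability for the $1$-Riesz potential established in Theorem \ref{Thm exp int}, applied to the truncated measure $\mu\chi_{2B}$. Fix a ball $B$ with $2B\subset\Omega$ and $N:=\|\mu\|_{\mathcal{M}^{d-1}(2B)}<\infty$, and set $\widetilde M:=\frac{1}{|2B|}\int_{2B}(|\nabla u|+s)\,dy$.

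For each $x\in B$, apply \eqref{grad estimate 1} with $R=r(B)$. Since $B(x,r(B))\subset 2B$, monotonicity of the Riesz potential in the measure gives $I_1(\mu\chi_{B(x,R)})(x)\le I_1(\mu\chi_{2B})(x)$, while the volume comparison $|2B|/|B(x,r(B))|=2^d$ bounds the average term by a multiple of $\widetilde M$. Raising the resulting inequality to the exponent $p-1\in(0,1)$ and using subadditivity $(a+b)^{p-1}\le a^{p-1}+b^{p-1}$, we obtain
\[
|\nabla u(x)|^{p-1}\le K\,I_1(\mu\chi_{2B})(x) + K\,\widetilde M^{p-1},\qquad x\in B,
\]
for a constant $K=K(d,p,\nu,L,\omega)>0$.

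Dividing by $N$, multiplying by a parameter $C_1>0$ to be chosen, and exponentiating turns this pointwise inequality into a product:
\[
\exp\!\Bigl(C_1\tfrac{|\nabla u(x)|^{p-1}}{N}\Bigr)\le \exp\!\Bigl(C_1 K\tfrac{\widetilde M^{p-1}}{N}\Bigr)\cdot \exp\!\Bigl(C_1 K\,\tfrac{I_1(\mu\chi_{2B})(x)}{N}\Bigr).
\]
The first factor does not depend on $x$, so the positive homogeneity of the Choquet integral (Lemma \ref{basicChoquet}(i)) together with monotonicity in the domain of integration ($B\subset 2B$) yields
\[
\int_B \exp\!\Bigl(C_1\tfrac{|\nabla u|^{p-1}}{N}\Bigr)\,d\mathcal{H}^\beta_\infty\le \exp\!\Bigl(C_1 K\tfrac{\widetilde M^{p-1}}{N}\Bigr)\int_{2B} \exp\!\Bigl(C_1 K\,\tfrac{I_1(\mu\chi_{2B})}{N}\Bigr)\,d\mathcal{H}^\beta_\infty.
\]

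Finally, I would choose $C_1$ so that $C_1 K=\gamma$, where $\gamma=\gamma(d,1,\beta)$ is the constant produced by Theorem \ref{Thm exp int} for $\alpha=1$. A direct check shows $\|\mu\chi_{2B}\|_{\mathcal{M}^{d-1}(B)}\le N$, so the exponent with $N$ in the denominator is pointwise dominated by the one with $\|\mu\chi_{2B}\|_{\mathcal{M}^{d-1}(B)}$, and Theorem \ref{Thm exp int} applied to $\mu\chi_{2B}$ on the ball $B$ gives
\[
\int_{2B} \exp\!\Bigl(\tfrac{\gamma\,I_1(\mu\chi_{2B})}{N}\Bigr)\,d\mathcal{H}^\beta_\infty\le C\,r(2B)^\beta=C\cdot 2^\beta r(B)^\beta.
\]
Combining and dividing by $r(B)^\beta$ produces \eqref{PDE5}. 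The main obstacle is bookkeeping at the Choquet level: one must ensure that the multiplicative constant $\exp(C_1K\widetilde M^{p-1}/N)$ can be extracted from the nonlinear Choquet integral, which relies on positive homogeneity rather than linearity, and one must arrange the normalization by $N=\|\mu\|_{\mathcal{M}^{d-1}(2B)}$ (as opposed to $\|\mu\chi_{2B}\|_{\mathcal{M}^{d-1}(B)}$) so that the final exponential estimate is uniform in $B$. The overall structure mirrors the classical passage from nonlinear potential estimates to exponential integrability, but the nonlinearity of $\mathcal{H}^\beta_\infty$ forces the product-decomposition step before any integration.
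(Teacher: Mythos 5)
Your proposal is correct and follows essentially the same route as the paper: the Duzaar--Mingione pointwise bound \eqref{grad estimate 1}, the subadditivity of $t\mapsto t^{p-1}$, extraction of the $x$-independent factor by positive homogeneity of the Choquet integral, and an application of Theorem \ref{Thm exp int} with $\alpha=1$ to the truncated measure $\mu\chi_{2B}$. The only nit is that Theorem \ref{Thm exp int} should be invoked with reference ball $2B$ rather than $B$, since its proof uses that the measure is supported in the reference ball (to control the tail of $I_\alpha$ outside its double); this costs only a harmless factor of $2^\beta$ in the normalization and is exactly how the paper proceeds.
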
 
 
 \begin{proof}
 	The proof follows the approach of \cite[Theorem 1.5]{HonzikJaye}. %We will prove \eqref{PDE5}, and \eqref{PDE6} follows similarly, using \eqref{grad estimate 2} instead of \eqref{grad estimate 1}.
 	Let $B= B(x_0, R)$ be a ball such that $2B\subset \Omega$. It follows from \eqref{grad estimate 1} that 
 	\begin{equation}
 		|\nabla u (x)|^{p-1} \le C I_1 (\mu \chi_{2B}) (x) + C \left(\frac{1}{|2B|}\int_{2B} (|\nabla u(y)|+s)dy\right) ^{p-1}
 	\end{equation}
 	for all $x\in B$. Applying Theorem \ref{Thm exp int} there exist $C'=C'(d,\beta)>0$ and $\gamma=\gamma(d,\beta)>0$ such that
 	\begin{equation*}
 		\frac{1}{r(2B)^\beta} \int_{2B} \exp \left( \gamma \frac{I_1 \mu(x)}{\left\| \mu\right\|_{\mathcal{M}^{d-1}(2B)}}\right) d\mathcal{H}^\beta_\infty \le C'.
 	\end{equation*}
 	Hence, 
 	\begin{align*}
 		\frac{1}{r(B)^\beta} \int_{B} \exp \left( \frac{\gamma}{C} \frac{|\nabla u(x) |^{p-1}}{\left\| \mu\right\|_{\mathcal{M}^{d-1}(2B)} } \right) d \mathcal{H}_\infty^\beta  \le & C \left(\frac{1}{r(2B)^\beta} \int_{2B} \exp \left( \gamma \frac{I_1 \mu(x)}{\left\| \mu\right\|_{\mathcal{M}^{d-1}(2B)}}\right) d\mathcal{H}^\beta_\infty\right)\\
 		& \cdot \exp \left( \frac{\gamma}{|2B|}\int_{2B} \frac{(|\nabla u(y)|+s)}{\left\| \mu\right\|_{\mathcal{M}^{d-1}(2B)} }dy  \right)^{p-1}  \\
 		\le & C_2 \exp \left(  \frac{\gamma}{|2B|}\int_{2B} \frac{(|\nabla u(y)|+s)}{\left\| \mu\right\|_{\mathcal{M}^{d-1}(2B)} }dy  \right)^{p-1} .
 	\end{align*}
 	This concludes the proof of \eqref{PDE5}.
 \end{proof}

\begin{remark}
    We observe that our techniques cannot reach the endpoint $\beta=d-1$.
\end{remark}

\section*{Acknowledgments}

Y.-W.~B.~Chen is supported by the National Science and Technology Council of Taiwan under research grant number 113-2811-M-002-027. A. Claros is supported by the Basque Government through the BERC 2022-2025 program, by the Ministry of Science and Innovation:  Grant PRE2021-099091 funded by BCAM Severo Ochoa accreditation CEX2021-001142-S/MICIN/AEI/10.13039/501100011033 and by ESF+. 

\section*{Conflicts of Interest}
The authors have no conflicts of interest to declare.

\section*{Data Availability Statement}
Data sharing not applicable to this article as no datasets were generated or analyzed during the current study.

\begin{bibdiv}

\begin{biblist}

\bib{Adams1973}{article}{
   author={Adams, David R.},
   title={Traces of potentials. II},
   journal={Indiana Univ. Math. J.},
   volume={22},
   date={1972/73},
   pages={907--918},
   issn={0022-2518},
   review={\MR{313783}},
   doi={10.1512/iumj.1973.22.22075},
}

\bib{Adams1975}{article}{
   author={Adams, David R.},
   title={A note on Riesz potentials},
   journal={Duke Math. J.},
   volume={42},
   date={1975},
   number={4},
   pages={765--778},
   issn={0012-7094},
   review={\MR{458158}},
}
%\bib{Adams1988}{article}{
%   author={Adams, David R.},
%   title={A sharp inequality of J. Moser for higher order derivatives},
%   journal={Ann. of Math. (2)},
%   volume={128},
%   date={1988},
%   number={2},
%   pages={385--398},
%   issn={0003-486X},
%   review={\MR{960950}},
%   doi={10.2307/1971445},
%}
%\bib{AdamsChoquet}{article}{
%   author={Adams, David R.},
%   title={A note on Choquet integrals with respect to Hausdorff capacity},
%   conference={
%      title={Function spaces and applications},
%      address={Lund},
%      date={1986},
%   },
%   book={
%      series={Lecture Notes in Math.},
%      volume={1302},
%      publisher={Springer, Berlin},
%   },
%   date={1988},
%   pages={115--124},
%   review={\MR{942261}},
%   doi={10.1007/BFb0078867},
%}

\bib{AdamsChoquet1}{article}{
   author={Adams, David R.},
   title={Choquet integrals in potential theory},
   journal={Publ. Mat.},
   volume={42},
   date={1998},
%   number={1},
   pages={3--66},
%   issn={0214-1493},
%   review={\MR{1628134}},
%   doi={10.5565/PUBLMAT_42198_01},
}

\bib{AdamsMorreySpacebook}{book}{
   author={Adams, David R.},
   title={Morrey spaces},
   series={Lecture Notes in Applied and Numerical Harmonic Analysis},
   publisher={Birkh\"{a}user/Springer, Cham},
   date={2015},
   pages={xv+121},
   isbn={978-3-319-26679-4},
   isbn={978-3-319-26681-7},
   review={\MR{3467116}},
   doi={10.1007/978-3-319-26681-7},
}

%\bib{AdamsXiao1}{article}{
%   author={Adams, David R.},
%   author={Xiao, Jie},
%   title={Morrey potentials and harmonic maps},
%   journal={Comm. Math. Phys.},
%   volume={308},
%   date={2011},
%   number={2},
%   pages={439--456},
%   issn={0010-3616},
%   review={\MR{2851148}},
%   doi={10.1007/s00220-011-1319-5},
%}

\bib{AdamsXiao2}{article}{
   author={Adams, David R.},
   author={Xiao, Jie},
   title={Erratum to: Morrey potentials and harmonic maps [MR2851148]},
   journal={Comm. Math. Phys.},
   volume={339},
   date={2015},
   number={2},
   pages={769--771},
   issn={0010-3616},
   review={\MR{3370618}},
   doi={10.1007/s00220-015-2409-6},
}

%\bib{Brezis}{article}{
%   author={Br\'{e}zis, H.},
%   title={Laser beams and limiting cases of Sobolev inequalities},
%   conference={
%      title={Nonlinear partial differential equations and their
%      applications. Coll\`ege de France Seminar, Vol. II},
%      address={Paris},
%      date={1979/1980},
%   },
%   book={
%      series={Res. Notes in Math.},
%      volume={60},
%      publisher={Pitman, Boston, Mass.-London},
%   },
%   date={1982},
%   pages={86--97},
%   review={\MR{652508}},
%}
%
%\bib{BW}{article}{
%   author={Br\'{e}zis, Ha\"{\i}m},
%   author={Wainger, Stephen},
%   title={A note on limiting cases of Sobolev embeddings and convolution
%   inequalities},
%   journal={Comm. Partial Differential Equations},
%   volume={5},
%   date={1980},
%   number={7},
%   pages={773--789},
%   issn={0360-5302},
%   review={\MR{579997}},
%   doi={10.1080/03605308008820154},
%}

%
%\bib{BagbyZiemer}{article}{
%   author={Bagby, Thomas},
%   author={Ziemer, William P.},
%   title={Pointwise differentiability and absolute continuity},
%   journal={Trans. Amer. Math. Soc.},
%   volume={191},
%   date={1974},
%   pages={129--148},
%   issn={0002-9947},
%   review={\MR{344390}},
%   doi={10.2307/1996986},
%}

%\bib{BZ_1974}{article}{
 %  author={Bagby, Thomas},
%   author={Ziemer, William P.},
 %  title={Pointwise differentiability and absolute continuity},
 %  journal={Trans. Amer. Math. Soc.},
 %  volume={191},
 %  date={1974},
 %  pages={129--148},
 %  issn={0002-9947},
 %  review={\MR{0344390}},
 %  doi={10.2307/1996986},
%}

\bib{Chen-Spector}{article}{
   author={Chen, You-Wei},
   author={Spector, Daniel},
   title={On functions of bounded $\beta$-dimensional mean oscillation},
   journal={Adv. Calc. Var.},
%   volume={},
%   date={},
%   number={},
%   pages={},
%   issn={},
%   review={},
   doi={https://doi.org/10.1515/acv-2022-0084},
}

\bib{chen2023capacitary}{article}{
   author={Chen, You-Wei Benson},
   author={Ooi, Keng Hao},
   author={Spector, Daniel},
   title={Capacitary maximal inequalities and applications},
   journal={J. Funct. Anal.},
   volume={286},
   date={2024},
   number={12},
   pages={Paper No. 110396, 31},
   issn={0022-1236},
   review={\MR{4729407}},
   doi={10.1016/j.jfa.2024.110396},
}

\bib{chen2024selfimproving}{article}{
      author={Chen, You-Wei Benson},
      title={A self-improving property of Riesz potentials in BMO},
      year={2024},
      eprint={2404.16707},
      archivePrefix={arXiv},
      primaryClass={math.FA}
}

\bib{Cianchi:2008}{article}{
   author={Cianchi, Andrea},
   title={Moser-Trudinger trace inequalities},
   journal={Adv. Math.},
   volume={217},
   date={2008},
   number={5},
   pages={2005--2044},
   issn={0001-8708},
   review={\MR{2388084}},
   doi={10.1016/j.aim.2007.09.007},
}

%
%
%
%\bib{CarlesonChang}{article}{
%   author={Carleson, Lennart},
%   author={Chang, Sun-Yung A.},
%   title={On the existence of an extremal function for an inequality of J.
%   Moser},
%   language={English, with French summary},
%   journal={Bull. Sci. Math. (2)},
%   volume={110},
%   date={1986},
%   number={2},
%   pages={113--127},
%   issn={0007-4497},
%   review={\MR{878016}},
%}

%
%\bib{ChangMarshall}{article}{
%   author={Chang, S.-Y. A.},
%   author={Marshall, D. E.},
%   title={On a sharp inequality concerning the Dirichlet integral},
%   journal={Amer. J. Math.},
%   volume={107},
%   date={1985},
%   number={5},
%   pages={1015--1033},
%   issn={0002-9327},
%   review={\MR{805803}},
%   doi={10.2307/2374345},
%}

%
%\bib{ChangYang}{article}{
%   author={Chang, Sun-Yung Alice},
%   author={Yang, Paul C.},
%   title={Prescribing Gaussian curvature on $S^2$},
%   journal={Acta Math.},
%   volume={159},
%   date={1987},
%   number={3-4},
%   pages={215--259},
%   issn={0001-5962},
%   review={\MR{908146}},
%   doi={10.1007/BF02392560},
%}
%
%\bib{ChangYang2003}{article}{
%   author={Chang, Sun-Yung Alice},
%   author={Yang, Paul C.},
%   title={he inequality of Moser and Trudinger and applications toconformal geometry},
%   journal={Comm. Pure Appl. Math.},
%   volume={56},
%   date={2003},
%   pages={1135–-1150
%129--148},
% %  issn={0002-9947},
%   review={\MR{1989228}},
%  % doi={10.2307/1996986},
%}

\bib{CruzUribeMoen}{article}{
   author={Cruz-Uribe, David},
   author={Moen, Kabe},
   title={One and two weight norm inequalities for Riesz potentials},
   journal={Illinois J. Math.},
   volume={57},
   date={2013},
   number={1},
   pages={295--323},
   issn={0019-2082},
   review={\MR{3224572}},
}

\bib{DZ24}{article}{
   author={Dong, Hongjie},
   author={Zhu, Hanye},
   title={Gradient estimates for singular $p$-Laplace type equations with
   measure data},
   journal={J. Eur. Math. Soc. (JEMS)},
   volume={26},
   date={2024},
   number={10},
   pages={3939--3985},
   issn={1435-9855},
   review={\MR{4768412}},
   doi={10.4171/jems/1400},
}

\bib{DM10}{article}{
   author={Duzaar, Frank},
   author={Mingione, Giuseppe},
   title={Gradient estimates via linear and nonlinear potentials},
   journal={J. Funct. Anal.},
   volume={259},
   date={2010},
   number={11},
   pages={2961--2998},
   issn={0022-1236},
   review={\MR{2719282}},
   doi={10.1016/j.jfa.2010.08.006},
}

\bib{DM11}{article}{
   author={Duzaar, Frank},
   author={Mingione, Giuseppe},
   title={Gradient estimates via non-linear potentials},
   journal={Amer. J. Math.},
   volume={133},
   date={2011},
   number={4},
   pages={1093--1149},
   issn={0002-9327},
   review={\MR{2823872}},
   doi={10.1353/ajm.2011.0023},
}

\bib{FontanaMorpurgo}{article}{
   author={Fontana, Luigi},
   author={Morpurgo, Carlo},
   title={Adams inequalities on measure spaces},
   journal={Adv. Math.},
   volume={226},
   date={2011},
   number={6},
   pages={5066--5119},
   issn={0001-8708},
   review={\MR{2775895}},
   doi={10.1016/j.aim.2011.01.003},
}

%
%\bib{DP}{article}{
%   author={De Pauw, Thierry},
%   title={On SBV dual},
%   journal={Indiana Univ. Math. J.},
%   volume={47},
%   date={1998},
%   number={1},
%   pages={99--121},
%   issn={0022-2518},
%   review={\MR{1631541}},
%   doi={10.1512/iumj.1998.47.1487},
%}
%
%\bib{DP1}{article}{
%   author={De Pauw, Thierry},
%   title={Undecidably semilocalizable metric measure spaces},
%   journal={https://arxiv.org/pdf/1909.10190.pdf},
%   %volume={47},
%   %date={1998},
%   %number={1},
%   %pages={99--121},
%   %issn={0022-2518},
%   %review={\MR{1631541}},
%   %doi={10.1512/iumj.1998.47.1487},
%}

%\bib{Fefferman}{article}{
%   author={Fefferman, Charles},
%   title={Characterizations of bounded mean oscillation},
%   journal={Bull. Amer. Math. Soc.},
%   volume={77},
%   date={1971},
%   pages={587--588},
%   issn={0002-9904},
%   review={\MR{280994}},
%   doi={10.1090/S0002-9904-1971-12763-5},
%}

\bib{FeffermanStein}{article}{
   author={Fefferman, C.},
   author={Stein, E. M.},
   title={$H^{p}$ spaces of several variables},
   journal={Acta Math.},
   volume={129},
   date={1972},
   number={3-4},
   pages={137--193},
   issn={0001-5962},
   review={\MR{447953}},
   doi={10.1007/BF02392215},
}
%
%\bib{FS}{article}{
%   author={Fusco, Nicola},
%   author={Spector, Daniel},
%   title={A remark on an integral characterization of the dual of BV},
%   journal={J. Math. Anal. Appl.},
%   volume={457},
%   date={2018},
%   number={2},
%   pages={1370--1375},
%   issn={0022-247X},
%   review={\MR{3705358}},
%   doi={10.1016/j.jmaa.2017.01.092},
%}

\bib{Petteri_2023}{article}{
   author={Harjulehto, Petteri},
   author={Hurri-Syrj\"{a}nen, Ritva},
   title={On Choquet integrals and Poincar\'{e}-Sobolev inequalities},
   journal={J. Funct. Anal.},
   volume={284},
   date={2023},
   number={9},
   pages={Paper No. 109862, 18},
   issn={0022-1236},
   review={\MR{4545158}},
   doi={10.1016/j.jfa.2023.109862},
}

%\bib{grafakos}{book}{
%   author={Grafakos, Loukas},
%   title={Classical Fourier analysis},
%   series={Graduate Texts in Mathematics},
%   volume={249},
%   edition={3},
%   publisher={Springer, New York},
%   date={2014},
%   pages={xviii+638},
%   %isbn={978-1-4939-1193-6},
%  % isbn={978-1-4939-1194-3},
%   %review={\MR{3243734}},
%   %doi={10.1007/978-1-4939-1194-3},
%}
%
%\bib{Hedberg}{article}{
%   author={Hedberg, Lars Inge},
%   title={On certain convolution inequalities},
%   journal={Proc. Amer. Math. Soc.},
%   volume={36},
%   date={1972},
%   pages={505--510},
%   issn={0002-9939},
%   review={\MR{312232}},
%   doi={10.2307/2039187},
%}

\bib{harjulehto2024hausdorff}{article}{
      author={Harjulehto, Petteri},
      author={Hurri-Syrjänen, Ritva},
       title={On Hausdorff content maximal operator and Riesz potential for non-measurable functions},
        year={2024},
     eprint={2405.12113v1},
archivePrefix={arXiv},
 primaryClass={math.FA},
       url={https://arxiv.org/abs/2405.12113v1}
}

\bib{HonzikJaye}{article}{
   author={Honz\'ik, Petr},
   author={Jaye, Benjamin J.},
   title={On the good-$\lambda$ inequality for nonlinear potentials},
   journal={Proc. Amer. Math. Soc.},
   volume={140},
   date={2012},
   number={12},
   pages={4167--4180},
   issn={0002-9939},
   review={\MR{2957206}},
   doi={10.1090/S0002-9939-2012-11352-8},
}

\bib{JN}{article}{
   author={John, F.},
   author={Nirenberg, L.},
   title={On functions of bounded mean oscillation},
   journal={Comm. Pure Appl. Math.},
   volume={14},
   date={1961},
   pages={415--426},
   issn={0010-3640},
   review={\MR{131498}},
   doi={10.1002/cpa.3160140317},
}

\bib{Yudovich}{article}{
   author={Judovi\v{c}, V. I.},
   title={Some estimates connected with integral operators and with
   solutions of elliptic equations},
   language={Russian},
   journal={Dokl. Akad. Nauk SSSR},
   volume={138},
   date={1961},
   pages={805--808},
   issn={0002-3264},
   review={\MR{0140822}},
}

\bib{MS}{article}{
   author={Mart\'{\i}nez, \'{A}ngel D.},
   author={Spector, Daniel},
   title={An improvement to the John-Nirenberg inequality for functions in
   critical Sobolev spaces},
   journal={Adv. Nonlinear Anal.},
   volume={10},
   date={2021},
   number={1},
   pages={877--894},
   issn={2191-9496},
   review={\MR{4191703}},
   doi={10.1515/anona-2020-0157},
}

\bib{MR0340523}{article}{
   author={Muckenhoupt, Benjamin},
   author={Wheeden, Richard},
   title={Weighted norm inequalities for fractional integrals},
   journal={Trans. Amer. Math. Soc.},
   volume={192},
   date={1974},
   pages={261--274},
   issn={0002-9947},
   review={\MR{0340523}},
   doi={10.2307/1996833},
}
%\bib{Moser1960}{article}{
%   author={Moser, J\"{u}rgen},
%   title={A new proof of De Giorgi's theorem concerning the regularity
%   problem for elliptic differential equations},
%   journal={Comm. Pure Appl. Math.},
%   volume={13},
%   date={1960},
%   pages={457--468},
%   issn={0010-3640},
%   review={\MR{170091}},
%   doi={10.1002/cpa.3160130308},
%}

%\bib{Moser1971}{article}{
%   author={Moser, J.},
%   title={A sharp form of an inequality by N. Trudinger},
%   journal={Indiana Univ. Math. J.},
%   volume={20},
%   date={1970/71},
%   pages={1077--1092},
%   issn={0022-2518},
%   review={\MR{301504}},
%   doi={10.1512/iumj.1971.20.20101},
%}

\bib{NP20}{article}{
   author={Nguyen, Quoc-Hung},
   author={Phuc, Nguyen Cong},
   title={Pointwise gradient estimates for a class of singular quasilinear
   equations with measure data},
   journal={J. Funct. Anal.},
   volume={278},
   date={2020},
   number={5},
   pages={108391, 35},
   issn={0022-1236},
   review={\MR{4046205}},
   doi={10.1016/j.jfa.2019.108391},
}

%\bib{oneil}{article}{
%   author={O'Neil, Richard},
%   title={Convolution operators and $L(p,\,q)$ spaces},
%   journal={Duke Math. J.},
%   volume={30},
%   date={1963},
%   pages={129--142},
%  %issn={0012-7094},
%   %review={\MR{0146673}},
%}

\bib{OV}{article}{
   author={Orobitg, Joan},
   author={Verdera, Joan},
   title={Choquet integrals, Hausdorff content and the Hardy-Littlewood
   maximal operator},
   journal={Bull. London Math. Soc.},
   volume={30},
   date={1998},
%   number={2},
   pages={145--150},
%   issn={0024-6093},
%   review={\MR{1489325}},
%   doi={10.1112/S0024609397003688},
}

%\bib{Peetre}{article}{
%   author={Peetre, Jaak},
%   title={On convolution operators leaving $L^{p,}\,^{\lambda }$ spaces
%   invariant},
%   journal={Ann. Mat. Pura Appl. (4)},
%   volume={72},
%   date={1966},
%   pages={295--304},
%   issn={0003-4622},
%   review={\MR{209917}},
%   doi={10.1007/BF02414340},
%}
%
%\bib{Peetre-Fr}{article}{
%   author={Peetre, Jaak},
%   title={Espaces d'interpolation et th\'{e}or\`eme de Soboleff},
%   language={French},
%   journal={Ann. Inst. Fourier (Grenoble)},
%   volume={16},
%   date={1966},
%   number={fasc., fasc. 1},
%   pages={279--317},
%   issn={0373-0956},
%   review={\MR{221282}},
%}

%\bibitem{P}  Pohozaev, S. I., {\em The Sobolev embedding in the case $pl = n$}, Proc. Tech. Sci. Conf. on Adv. Sci. Research 1964-1965, Mathematics Section, Moskov. Energet. Inst. Moscow (1965), pp. 158-170.

\bib{PS_2023}{article}{
   author={Ponce, Augusto C.},
   author={Spector, Daniel},
   title={Some remarks on capacitary integrals and measure theory},
   conference={
      title={Potentials and partial differential equations---the legacy of
      David R. Adams},
   },
   book={
      series={Adv. Anal. Geom.},
      volume={8},
      publisher={De Gruyter, Berlin},
   },
   isbn={978-3-11-079265-2},
   isbn={978-3-11-079272-0},
   isbn={978-3-11-079278-2},
   date={[2023] \copyright 2023},
   pages={235--263},
   review={\MR{4654520}},
}

\bib{STW}{article}{
   author={Saito, Hiroki},
   author={Tanaka, Hitoshi},
   author={Watanabe, Toshikazu},
   title={Abstract dyadic cubes, maximal operators and Hausdorff content},
   journal={Bull. Sci. Math.},
   volume={140},
   date={2016},
   number={6},
   pages={757--773},
   issn={0007-4497},
   review={\MR{3543752}},
   doi={10.1016/j.bulsci.2016.02.001},
}

%\bib{Sawyer_1982}{article}{
%   author={Sawyer, Eric T.},
%   title={A characterization of a two-weight norm inequality for maximal
%   operators},
%   journal={Studia Math.},
%   volume={75},
%   date={1982},
%   number={1},
%   pages={1--11},
%   issn={0039-3223},
%   review={\MR{0676801}},
%   doi={10.4064/sm-75-1-1-11},
%}

\bib{SawyerWheeden}{article}{
   author={Sawyer, E.},
   author={Wheeden, R. L.},
   title={Weighted inequalities for fractional integrals on Euclidean and
   homogeneous spaces},
   journal={Amer. J. Math.},
   volume={114},
   date={1992},
   number={4},
   pages={813--874},
   issn={0002-9327},
   review={\MR{1175693}},
   doi={10.2307/2374799},
}

\bib{YangYuan}{article}{
   author={Yang, Dachun},
   author={Yuan, Wen},
   title={A note on dyadic Hausdorff capacities},
   journal={Bull. Sci. Math.},
   volume={132},
   date={2008},
   number={6},
   pages={500--509},
   issn={0007-4497},
   review={\MR{2445577}},
   doi={10.1016/j.bulsci.2007.06.005},
}

\end{biblist}
	
\end{bibdiv}

\end{document}